\newcommand{\Ab}{\mathbb{A}}
\newcommand{\Eb}{\mathbb{E}}
\newcommand{\Fb}{\mathbb{F}}
\newcommand{\Gb}{\mathbb{G}}
\newcommand{\Nb}{\mathbb{N}}
\newcommand{\Pb}{\mathbb{P}}
\newcommand{\Qb}{\mathbb{Q}}
\newcommand{\Vb}{\mathbb{V}}
\newcommand{\Zb}{\mathbb{Z}}
\newcommand{\Cc}{\mathcal{C}}
\newcommand{\Dc}{\mathcal{D}}
\newcommand{\Ec}{\mathcal{E}}
\newcommand{\Hc}{\mathcal{H}}
\newcommand{\Oc}{\mathcal{O}}
\newcommand{\Pc}{\mathcal{P}}
\newcommand{\Ls}{\mathscr{L}}
\newcommand{\Ps}{\mathscr{P}}
\newcommand{\Xfr}{\mathfrak{X}}
\newcommand{\Yfr}{\mathfrak{Y}}
\newcommand{\Brm}{\mathrm{B}}
\newcommand{\PCob}{{\underline\Omega}}
\newcommand{\op}{\mathrm{op}}
\newcommand{\CH}{\mathrm{CH}}
\newcommand{\Spec}{\mathrm{Spec}}
\newcommand{\Gr}{\mathrm{Gr}}
\newcommand{\bl}{\mathrm{Bl}}
\newcommand{\wtil}{\widetilde}
\newcommand{\colim}{\mathrm{colim}}
\newcommand{\Fun}{\mathrm{Fun}}
\newcommand{\hook}{\hookrightarrow}
\newcommand{\cl}{\mathrm{cl}}
\newcommand{\CP}{\mathbb{C}\mathrm{P}}
\newcommand{\Id}{\mathrm{Id}}
\newcommand{\xto}{\xrightarrow}
\newcommand{\dash}{{\text -}}
\newcommand{\MGL}{\mathrm{MGL}}
\newcommand{\tto}{\twoheadrightarrow}
\newcommand{\MS}{\mathrm{MS}}
\newcommand{\SH}{\mathrm{SH}}
\newcommand{\syn}{\mathrm{syn}}
\newcommand{\Sm}{\mathrm{Sm}}
\newcommand{\cdh}{\mathrm{cdh}}
\newcommand{\afp}{\mathrm{afp}}
\newcommand{\fp}{\mathrm{fp}}
\newcommand{\MU}{\mathrm{MU}}
\newcommand{\QProj}{\mathrm{QProj}}
\newcommand{\Ring}{\mathrm{Ring}}
\newcommand{\mot}{\mathrm{mot}}
\newcommand{\dbe}{\mathrm{dbe}}
\newcommand{\sbe}{\mathrm{sbe}}
\newcommand{\Sch}{\mathrm{Sch}}
\newcommand{\Nis}{\mathrm{Nis}}
\newcommand{\Sp}{\mathrm{Sp}}
\newcommand{\Bl}{\mathrm{Bl}}
\newcommand{\cofib}{\mathrm{cofib}}
\newcommand{\nil}{\mathrm{nil}}
\newcommand{\Zar}{\mathrm{Zar}}
\newcommand{\Fin}{\mathrm{Fin}}
\newcommand{\SSeq}{\mathrm{SSeq}}
\newcommand{\lax}{\mathrm{lax}}
\newcommand{\abe}{\mathrm{abe}}
\newtheorem{theo}{Tplottin ubuntuheorem}[section]
\theoremstyle{plain}
\newtheorem{thm}[theo]{Theorem}
\newtheorem{lem}[theo]{Lemma}
\newtheorem{prop}[theo]{Proposition}
\newtheorem{cor}[theo]{Corollary}
\newtheorem*{thm*}{Theorem}
\newtheorem*{lem*}{Lemma}
\newtheorem*{prop*}{Proposition}
\newtheorem*{cor*}{Corollary}
\newtheorem{quest}[theo]{Question}
\theoremstyle{definition}
\newtheorem{defn}[theo]{Definition}
\newtheorem{ex}[theo]{Example}
\newtheorem{cons}[theo]{Construction}
\newtheorem{rem}[theo]{Remark}
\newcommand{\rev}{\textcolor{black}}
\title{Motivic Steenrod problem away from the characteristic}
\author{Toni Annala}
\address{School of Mathematics, Institute for Advanced Study, 1 Einstein Drive, Princeton, NJ 08540, USA}
\email{tannala@ias.edu}
\author{Tobias Shin}
\address{Eckhart Hall, 5734 S University Ave, Chicago, IL 60637, USA}
\email{tobiasshin@uchicago.edu}
\date{\today}
\begin{document}

\maketitle

\begin{abstract}
In topology, the Steenrod problem asks whether every singular homology class is the pushforward of the fundamental class of a closed oriented manifold. Here, we introduce an analogous question in algebraic geometry: is every element on the Chow line of the motivic cohomology of $X$ the pushforward of a fundamental class along a projective derived-lci morphism? If $X$ is a smooth variety over a field of characteristic $p \geq 0$, then a positive answer to this question follows up to $p$-torsion from resolution of singularities by alterations. However, if $X$ is singular, then this is no longer necessarily so: we give examples of motivic cohomology classes of a singular scheme $X$ that are not $p$-torsion and are not expressible as such pushforwards. A consequence of our result is that the Chow ring of a singular variety cannot be expressed as a quotient of its algebraic cobordism ring, as suggested by the first-named-author in his thesis. 
\end{abstract}

\tableofcontents

\section{Introduction}\label{sect:intro}

The Steenrod problem in algebraic topology asks if every class in the singular homology of a topological space $X$ is the pushforward of the fundamental class of an oriented closed manifold mapping continuously to $X$ \cite{eilenberg:1949}. In 1954, as part of his work on cobordism theory, Thom showed that the answer to this question depends on the chosen coefficients \cite{thom:1954}: the answer is positive for $H_*(-;\Qb)$ and $H_*(-; \Fb_2)$, and negative for $H_*(-;\Zb)$ and $H_*(-;\Fb_p)$ for odd primes $p$.

The purpose of this work is to study the cohomological version of the analogous question in algebraic geometry. In algebraic geometry, the role of singular cohomology is played by \emph{motivic cohomology} \cite{beilinson-pairing, beilinson:1987, lichtenbaum:1984}, which was constructed for smooth varieties in the guise of higher Chow groups by Bloch \cite{bloch:1986}. Given their central role in algebraic geometry,  motivic cohomology groups have been studied intensively to this day. In particular, Voevodsky gave an alternative construction for the motivic cohomology groups of smooth varieties and extended them to finite type $k$-schemes in such a way that the resulting theory satisfies cdh-descent \cite{voevodsky:FieldMot}. These, \emph{cdh-local} motivic cohomology groups (and related constructions) have attracted a lot of attention due to their relevance for many fundamental questions in algebraic geometry, such as the structure of the motivic Steenrod algebra away from the characteristic \cite{voevodsky:SteenrodAlg,kelly:2018,HKO}. 

Unfortunately, cdh-local motivic cohomology seems not to be the ``correct'' definition of motivic cohomology for singular schemes. For example, there is an Atiyah--Hirzebruch spectral sequence that relates cdh-local motivic cohomology, not to algebraic $K$-theory, but to the homotopy $K$-theory of Weibel \cite{weibel:1989}. Thus, cdh-local motivic cohomology should be the $\Ab^1$-localization of some more fundamental, non-$\Ab^1$-invariant theory that is closely related to algebraic $K$-theory and coincides with the Bloch's higher Chow groups on smooth varieties.

Recently, independent work by two groups of researchers has led to two proposals for such a theory, at least for equicharacteristic schemes. These constructions approach the problem from very different points-of-view: Elmanto--Morrow's construction uses trace methods \cite{elmanto-morrow}, whereas Kelly--Saito's construction uses crucially the pro-cdh topology developed in the same work \cite{kelly-saito}. The end results of these two approaches were identified in \cite[Corollary~1.11]{kelly-saito}. Henceforward, by a motivic cohomology of an equicharacteristic scheme, we mean the motivic cohomology constructed by Elmanto--Morrow and Kelly--Saito.


Next, we explain our formulation of the algebro-geometric analogue of Steenrod problem. Note that, one of the most striking differences between motivic cohomology and singular cohomology is that the motivic cohomology groups $H^{*,*}_\mot(X;\Zb)$ are bigraded. As the fundamental classes and their pushforwards lie in bidegrees $(2n,n)$ of motivic cohomology (the so called \emph{Chow line}), it only makes sense to ask if all cohomology classes in these degrees admit a description as pushforwards of fundamental classes. 

This raises the question: pushforwards along which class of morphisms $f \colon V \to X$ should we consider? The direct translation from topology would suggest that $V$ should be a smooth proper variety and $f$ an arbitrary map of schemes. However, motivic cohomology does not admit Gysin pushforwards $f_!$ along all such maps. Rather, we want $f$ to be a proper derived-lci (also known as quasi-smooth) morphism. This seems to be the correct relative notion of a closed manifold from the point of view of intersection theory, as it contains smooth proper maps, regular embeddings (e.g. divisors), and is closed under compositions as well as fibre products of derived schemes. For technical reasons, we restrict our attention to projective morphisms. Thus, we formulate the following question:

\begin{quest}[Motivic Steenrod problem over a field]
Let $X$ be a derived $k$-scheme, and let $R$ be a ring. Are the motivic cohomology $R$-modules $H^{2n,n}_\mot(X;R)$ generated by classes of form $f_!(1_V)$, where $f \colon V \to X$ is a projective derived-lci morphism of relative dimension $-n$?
\end{quest}

If the answer to the above question is yes for a particular choice of $X$ and coefficient ring $R$, then we say that $H^{*,*}_\mot(X;R)$ has the \emph{Steenrod property}. If $X$ is smooth, then one can use the well-known identification
\[
H^{2n,n}_\mot(X;\Zb) = \CH^n(X)
\]
of degree $(2n,n)$ motivic cohomology with the \emph{Chow ring} of $X$ \cite{fulton:1998} together with resolution of singularities either by blowups \cite{hironaka:1964a,hironaka:1964b} or by alterations \cite{dejong:1996, temkin:2017} to obtain the following result.

\begin{thm*}[Standard]
If $X$ is a smooth variety over a field $k$ of characteristic exponent $e$, then $H^{*,*}_\mot(X;\Zb[e^{-1}])$ has the Steenrod property.
\end{thm*}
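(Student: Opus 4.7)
The plan is a standard reduction to classes of integral subvarieties, followed by an application of alterations. Since $X$ is smooth over $k$, one has $H^{2n,n}_\mot(X;\Zb) = \CH^n(X)$, and tensoring with $\Zb[e^{-1}]$ gives $H^{2n,n}_\mot(X;\Zb[e^{-1}]) = \CH^n(X)[e^{-1}]$. Because the Chow group is generated, as an abelian group, by the classes $[Z]$ of integral closed subvarieties $Z \subset X$ of codimension $n$, it suffices to exhibit each such $[Z]$, after multiplication by some power of $e$, as a pushforward $f_!(1_V)$ along a projective derived-lci morphism of relative dimension $-n$.

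For this I would invoke Temkin's refinement of de Jong's theorem on alterations (in characteristic zero one could alternatively use Hironaka's resolution of singularities): for each such $Z$, there exists a projective alteration $\pi \colon \tilde Z \to Z$ with $\tilde Z$ smooth over $k$ and $\deg(\pi) = e^a$ for some $a \geq 0$. Composing with the closed immersion $Z \hookrightarrow X$ produces a projective morphism $f \colon \tilde Z \to X$. Because both $\tilde Z$ and $X$ are smooth over $k$, the graph $\tilde Z \to \tilde Z \times_k X$ is a regular closed immersion, and $f$ factors as this graph followed by the smooth projection to $X$; hence $f$ is lci --- in particular derived-lci --- of relative dimension $-n$. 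The projection formula then gives
\[
f_!(1_{\tilde Z}) \;=\; \pi_*[\tilde Z] \;=\; e^a \cdot [Z] \quad \text{in } \CH^n(X),
\]
and after inverting $e$ one recovers $[Z] = e^{-a} f_!(1_{\tilde Z})$, which generates $\CH^n(X)[e^{-1}]$ by the reduction above.

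The only substantive input is the existence, in positive characteristic, of smooth projective alterations of $Z$ whose degree is a power of $e$; this is Temkin's theorem, and it is precisely what forces us to invert $e$ rather than work integrally. Everything else --- the identification of the bigraded piece $H^{2n,n}_\mot$ with $\CH^n$ for smooth $X$, the fact that a morphism between smooth $k$-varieties is lci, and the compatibility of the Gysin pushforward on the Chow line with the classical proper pushforward of cycles --- is routine, so no real obstacle arises in this smooth case. (The interest of the paper lies precisely in the singular case, where no such identification or resolution is available.)
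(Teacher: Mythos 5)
Your argument is exactly the one the paper has in mind (the introduction explicitly sketches it: identify $H^{2n,n}_\mot(X;\Zb)$ with $\CH^n(X)$ for smooth $X$, then resolve cycles by Hironaka blowups in characteristic $0$ or by Temkin/de Jong alterations of $e$-power degree in positive characteristic). Your write-up is correct and fills in the routine steps the paper leaves implicit: the reduction to integral subvarieties, the graph-factorization showing the composite $\tilde Z \to X$ is lci of relative dimension $-n$, and the compatibility of the lci Gysin map on the Chow line with classical proper pushforward of cycles, which is what yields $f_!(1_{\tilde Z}) = \deg(\pi)\cdot[Z]$.
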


Our main result is that the above does not generalize to singular schemes.

\begin{thm}[Main theorem]\label{thm:main}
Let $k$ be a field of characteristic exponent $e$. Then there exists \rev{a finite type} $k$-scheme $X$ such that $H^{*,*}_\mot(X;\Zb[e^{-1}])$ does not have the Steenrod property.
\end{thm}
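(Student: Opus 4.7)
The plan is to reformulate the Steenrod property in terms of the orientation map from algebraic cobordism, use the $\Ab^1$-invariance of cobordism to factor this map through cdh-local motivic cohomology, and then exhibit an explicit class in the Chow line of a singular scheme that does not lie in the image of this comparison.

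\emph{First step: reformulation via cobordism.} The first-named author's algebraic cobordism $\Omega^*(X)$ is, for any (possibly singular or derived) $k$-scheme $X$, additively generated by classes $[f\colon V \to X]$ of projective derived-lci morphisms of the appropriate relative dimension, and the orientation map
\[
\Omega^n(X) \longrightarrow H^{2n,n}_{\mot}(X; \Zb[e^{-1}])
\]
sends $[f]$ to $f_!(1_V)$. The Steenrod property for $X$ with coefficients in $\Zb[e^{-1}]$ is therefore equivalent to surjectivity of this orientation map.

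\emph{Second step: factoring through cdh-local cohomology.} In a suitable formulation, algebraic cobordism is $\Ab^1$-invariant and satisfies cdh-descent after inverting $e$, so the orientation map factors as
\[
\Omega^n(X) \longrightarrow H^{2n,n}_{\mot,\cdh}(X; \Zb[e^{-1}]) \longrightarrow H^{2n,n}_{\mot}(X; \Zb[e^{-1}]).
\]
Every pushforward $f_!(1_V)$ thus lies in the image of the comparison map from cdh-local to (Elmanto--Morrow/Kelly--Saito) motivic cohomology. It therefore suffices to find an algebraic $k$-scheme $X$ for which this comparison map is \emph{not} surjective in some bidegree $(2n,n)$.

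\emph{Third step: constructing the counterexample.} The Elmanto--Morrow/Kelly--Saito comparison fits into a fiber sequence whose third term captures the non-$\Ab^1$-invariant (``singular'') piece of motivic cohomology, expressible in terms of Kähler differentials and related invariants of $X$. For a carefully chosen singular scheme --- e.g.\ a cuspidal cubic curve or a well-chosen higher-dimensional singularity --- produce an explicit class in $H^{2n,n}_{\mot}(X;\Zb[e^{-1}])$ whose image in the cokernel of the comparison map is nonzero after inverting $e$. By the previous step, no such class can be a projective derived-lci pushforward of a fundamental class, and the theorem follows.

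\emph{Main obstacle.} The delicate point is the third step: identifying the cokernel of the comparison map concretely, locating a singularity whose contribution genuinely lies in the Chow line (rather than off-diagonal pieces of the motivic bigraded complex), and verifying that this contribution survives after inverting the characteristic exponent $e$. A secondary, but technical, concern is ensuring that the factorization of the orientation map through cdh-local motivic cohomology is available in the singular generality needed; if this is not formal, one argues directly that $f_!(1_V)$ vanishes in the relevant non-$\Ab^1$-invariant quotient by reducing, via resolutions or alterations of $V$ away from $e$, to the smooth case where the factorization is standard.
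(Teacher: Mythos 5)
Your first step is sound: rephrasing the Steenrod property as surjectivity of the cycle map $\cl\colon \PCob^n(X)\to H^{2n,n}_\mot(X;\Zb[e^{-1}])$ is indeed what the paper does (via Corollary~\ref{cor:CycleMapsFromPCob}). The problems start at your second step.

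First, the comparison map between the Elmanto--Morrow theory and the cdh-local theory runs in the \emph{opposite} direction to what you wrote: cdh-sheafification gives $\Zb(i)^\mot \to \Zb(i)^\cdh$, so there is a map $H^{2n,n}_\mot(X;\Zb)\to H^{2n,n}_\cdh(X;\Zb)$, not a map $H_\cdh \to H_\mot$. Your proposed factorization $\Omega^n(X)\to H^{2n,n}_\cdh \to H^{2n,n}_\mot$ requires a section of the cdh-sheafification that does not exist. Moreover, the geometric precobordism rings $\PCob^*, \Omega^*$ are \emph{not} $\Ab^1$-invariant or cdh-local, precisely because they detect derived and nilpotent structure; asserting cdh-descent for them as a black box is the gap that would sink the whole argument. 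What the paper actually uses (Section~\ref{subsect:PCobObs}) is a commutative square
\[
\begin{tikzcd}
\PCob^n(X) \arrow[r]{}{\cl} \arrow[d] & H^{2n,n}_\mot(X;\Zb) \arrow[d] \\
\MGL^{2n,n}_\cdh(X) \arrow[r]{}{\vartheta} & H^{2n,n}_\cdh(X;\Zb),
\end{tikzcd}
\]
so a pushforward class in $H^{2n,n}_\mot$ lands, under cdh-sheafification, in the \emph{image of $\MGL_\cdh$}. The lifting question is against algebraic cobordism, not against motivic cohomology itself, and this is where the motivic Steenrod algebra ($Q_m\beta$) enters as an obstruction; your plan never invokes any such obstruction.

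Second, your third step is aimed at the wrong piece of the theory. The fiber of $\Zb(i)^\mot\to \Zb(i)^\cdh$ --- the ``singular'' contribution you want to extract via K\"ahler differentials --- is, by Elmanto--Morrow, either $p$-complete (in characteristic $p>0$) or a $\Qb$-vector space (in characteristic $0$). After inverting the characteristic exponent, in characteristic $0$ this leaves a rational contribution, and rationally the cycle map is \emph{already surjective} (see item~(\ref{item:QCase}) of the introduction: $\Qb_a\otimes_L\Omega^*(X)\cong\Qb\otimes\CH^*(X)$), so no obstruction lives there. You need a class that is $l$-torsion for a prime $l$ invertible in $k$; the paper uses Theorem~\ref{thm:MotToCDHlsurj} precisely so that such a class in $H^{2n,n}_\cdh$ lifts back to $H^{2n,n}_\mot$, which is opposite to the lifting direction your plan is built around.

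Finally, you will run into the issue that the unliftable classes coming from Steenrod operations live in off-diagonal bidegrees of smooth varieties. The paper's device for reaching the Chow line of a singular scheme is Construction~\ref{cons:SSus}: building singular affine varieties $X_n$ that are cdh-locally $\Ab^1$-equivalent to simplicial suspensions of a smooth variety $X$, and then transporting the obstruction class (found on a smooth approximation of $\Brm\mu_l\times\Brm\mu_l$) along the isomorphisms $\tilde E^{i,j}(X)\cong\tilde E^{i+n,j}(X_n)$. Without a suspension-like mechanism to move from $H^{3,2}$ of a smooth variety to $H^{4,2}=\CH^2$ of a singular one, a cuspidal cubic (or any fixed classical singularity) will not on its own place an obstruction in the Chow line.
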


\rev{In fact, we show the stronger result that for every prime $l$ that is invertible in $k$, there exists a finite type $k$-scheme $X$ such that $H^{*,*}_\mot(X;\Zb_{(l)})$ does not have the Steenrod property. The restriction to away from the characteristic was mainly for technological reasons: using the motivic Steenrod operations at the characteristic constructed recently in \cite{annala-elmanto}, the authors show that for a field $k$ of positive characteristic $p$, there exists a finite type $k$-scheme such that $H^{*,*}_\mot(X;\Zb_{(p)})$ does not have the Steenrod property.}

\begin{rem}[Proper maps versus projective maps]
In the statement of the motivic Steenrod problem, we have assumed $f$ to be projective rather than just proper. This is for the technical reason that, in Section~\ref{subsect:LongkeGysinProj}, we are able to construct Gysin pushforward maps only along those derived-lci maps that factor as a derived regular embedding into a relative projective space. We do not know how to construct Gysin pushforwards along all proper derived-lci maps. However, if one is able find such a construction, then one obtains a proper version of the motivic Steenrod problem, and Theorem~\ref{thm:main} should immediately generalize for that version.
\end{rem}

\subsection{Relation to Chow rings of singular varieties and algebraic cobordism}

The Chow ring of a smooth variety is one of the most fundamental invariants in algebraic geometry, providing a simple setting for doing intersection theory, and thus the foundations to areas such as enumerative geometry \cite{fulton:1998}. Although its origins are ancient, the Chow ring was put on a rigorous footing in the 70s by Fulton and MacPherson, who provided the first rigorous construction of the intersection product. However, despite substantial efforts \cite{fulton:1975, levine-weibel, annala-cob}, for a long time there was no widely accepted definition of Chow rings of singular varieties. However, following the recent advances in motivic cohomology \cite{elmanto-morrow,kelly-saito}, we may define
\begin{equation}\label{eq:ChowDef}
\CH^*(X) := H^{2*,*}_\mot(X; \Zb)    
\end{equation}
for any equicharacteristic scheme $X$. \rev{Recently, Bouis has constructed motivic cohomology in mixed characteristic \cite{bouis1,bouis2}. Thus, the Chow ring is now defined for all schemes. Nonetheless, we will restrict our attention to the equicharacteristic setting here for the sake of simplicity.}

Similarly, Levine--Morel constructed the \emph{algebraic cobordism rings} $\Omega^*(X)$ for smooth varieties $X$ over a field of characteristic 0 as the ring of cobordism classes of smooth varieties that are projective over $X$ \cite{levine-morel}. However, as they made a liberal use of resolution of singularities \cite{hironaka:1964a,hironaka:1964b} and weak factorization \cite{abramovich:2002}, their methods did not generalize further to singular schemes or to positive characteristic. By using derived algebraic geometry, the first-named-author was able to overcome these limitations and define the algebraic cobordism ring $\Omega^*(X)$ as the ring of cobordism classes of projective derived-lci $X$-schemes, and proved that these rings satisfy many good properties (such as projective bundle formula) for all noetherian schemes $X$ that admit an ample line bundle \cite{annala-cob, annala-yokura, annala-chern, annala-thesis}. Moreover, Annala raised the following question \cite{annala-cob,annala-thesis}.

\begin{quest}[Chow ring from cobordism]\label{quest:ChowFromCob}
Is the ring $\Zb_a \otimes_L \Omega^*(X)$ the correct model for the Chow ring of a noetherian derived scheme $X$ that admits an ample line bundle?
\end{quest}
\noindent Above, $L$ is Lazard's coefficient ring of the universal formal group law \cite{lazard:1955}, and $\Zb_a$ is the integers, considered as an $L$-algebra via the additive formal group law $x+y$. Slightly more generally, Adeel Khan has asked if there exists a quotient of the algebraic cobordism ring $\Omega^*(X)$ that gives the correct model of Chow ring (see \cite[\S 6.5.2]{khan:2020}).

Question~\ref{quest:ChowFromCob} was not raised without evidence. Namely: 

\begin{enumerate}    
\item If $X$ is a smooth quasi-projective variety in characteristic 0, then Levine--Morel proved that $\Zb_a \otimes_L \Omega^*(X) \cong \CH^*(X)$ \cite{levine-morel}. See \cite{lowrey--schurg, annala-pre-and-cob} for comparison of Levine--Morel algebraic cobordism with alternative approaches using derived geometry. \label{item:SmCase}

\item An analogous claim for algebraic $K$-theory holds: $\Zb_m \otimes_L \Omega^*(X) \cong K^0(X)$, where $\Zb_m$ is the integers considered as an $L$-module via the multiplicative formal group law $x + y - xy$, and where $K^0(X)$ is the Grothendieck ring of vector bundles on $X$ \cite[Theorem~5.4]{annala-chern}. \label{item:KCase} 

\item The claim holds with rational coefficients: $\Qb_a \otimes_L \Omega^*(X) \cong \Qb \otimes \CH^*(X)$. \rev{In particular, $H^{*,*}(X;\Qb)$ has the Steenrod property for all equicharacteristic schemes $X$ whose Krull dimension is finite and which admit an ample line bundle.\footnote{\rev{In fact, by the results of \cite{annala-base-ind-cob} we can deduce that $\Qb$-coefficient motivic cohomology safisfies the Steenrod property for all $X$ that have a finite Krull dimension and admit an ample family of line bundles.}} } This is because both sides are isomorphic to $\Qb \otimes K^0(X)$ via the Chern character. For the left hand side, this is  \cite[Theorem~5.9]{annala-chern}. For the right hand side, one can prove this by combining the identification of motivic cohomology with the $\gamma$-graded rational $K$-theory \cite[Theorem 1.1(2)]{elmanto-morrow} with the Riemann--Roch theorem for $\gamma$-graded rational $K$-theory \cite[Expose~VIII Theorem~3.6]{berthelot:1971}, and checking that Chern character gives an isomorphism from $\Qb \otimes K_0$ to $\gamma$-graded $\Qb \otimes K_0$. \label{item:QCase}

\item Levine and Weibel have constructed a candidate for the degree $d$ part of the Chow ring of a dimension $d$ equidimensional $k$-scheme $X$ \cite{levine-weibel}. By construction, it is generated by fundamental classes of fields mapping into the non-singular locus of $X$. As such maps are proper and derived-lci, the degree $d$ algebraic cobordism should at least surject to the group of Levine--Weibel zero cycles.

\item The rings $\Zb_a \otimes_L \Omega^*(X)$ form a reasonable cohomology theory, satisfying e.g. the projective bundle formula \cite{annala-yokura, annala-chern}. 
\end{enumerate}

Nonetheless, the answers to the Question~\ref{quest:ChowFromCob} and the more general question of Adeel Khan mentioned above are negative, as not all schemes have the Steenrod property (Theorem~\ref{thm:main}). To understand how this holds up to the evidence above, it is useful to consider the analogous claim in topology. The analogues of Items~(\ref{item:KCase}) and~(\ref{item:QCase}) are true in topology, and follow from the fact that the $L$-algebras $\Zb_m$ and $\Qb_a$ satisfy the assumptions of the Landweber exact functor theorem \cite{landweber:1976}. As the Landweber exact functor theorem is true in motivic homotopy theory (with and without $\Ab^1$-homotopy invariance, see \cite{naumann:2009,AHI:atiyah}), the algebro-geometric facts (\ref{item:KCase}) and (\ref{item:QCase}) do not seem surprising.  

However, the analogue of Item~\ref{item:SmCase} is false in topology. Indeed, the natural map $\mathrm{MU}^*(X) \to H^*(X;\Zb)$ from complex cobordism to singular cohomology need not be a surjection even when $X$ is a manifold. Thus, $\Zb_a \otimes_L \mathrm{MU}^*(X) \to H^*(X;\Zb)$ is not an isomorphism in general.

Under closer inspection, one realizes that Item~(\ref{item:SmCase}) is analogous to the fact that the natural map $\Zb_a \otimes_L \mathrm{MU}^*(X) \to H^*(X;\Zb)$ is an isomorphism in degree 0. To see this, one needs three ingredients:
\begin{enumerate}
    \item \textbf{Hopkins--Morel isomorphism:} There exists a natural map of motivic spectra
    $$\MGL/(a_1,a_2,\cdots) \to H\Zb$$
    that realizes motivic cohomology spectrum $H\Zb$ as an explicit quotient\footnote{If working over a field of positive characteristic $p$, then the map $\MGL/(a_1,a_2,\cdots) \to H\Zb$ is known to be an isomorphism only after inverting $p$. However, it is expected that inverting $p$ is not necessary.} of the algebraic cobordism spectrum $\MGL$ \cite{hoyois:2013}. Above, $a_i \in \MGL^{-2i,-i}(k)$ are the images of polynomial generators of the Lazard ring in the algebraic cobordism of the base field $k$.  
    
    \item \textbf{Levine's comparison theorem:} If $X$ is a smooth quasi-projective variety over a field of characteristic 0, then
    $$\MGL^{2i,i}(X) \cong \Omega^i(X)$$
    \cite{levine:2009}. In other words, the bigraded cohomology theory $\MGL^{i,j}(X)$ is \emph{higher algebraic cobordism}. 
    
    \item \textbf{Vanishing of negative higher cobordism groups:} If $X$ is a smooth variety, the groups $\MGL^{i,j}(X)$ vanish whenever $i > 2j$.\footnote{This is a consequence of the Hopkins--Morel isomorphism, so over a field of positive characteristic $p$, this is known only after inverting $p$ in the coefficients.} 
\end{enumerate}
Indeed, combining the above facts with the fact that, for an element $a \in E^{-2n,-n}(k)$, where $E$ is a motivic ring spectrum,\footnote{More generally, $E$ could be a module over a motivic ring spectrum $R$, and $a \in R^{-2n,-n}(k)$.} one obtains natural long exact quotient sequences
\begin{equation*}
    \cdots \to E^{i+2n,j+n}(X) \xto{a \cdot } E^{i,j}(X) \to (E/a)^{i,j}(X) \to E^{i+2n+1,j+n}(X) \to \cdots
\end{equation*}
one recovers the isomorphism $\Zb_a \otimes_L \Omega^*(X) \cong \CH^*(X)$ for a characteristic 0 smooth variety $X$.

For singular varieties, the negative higher cobordism groups do not always vanish. Ultimately, this is the reason why the rings $\Zb_a \otimes_L \Omega^*(X)$ do not give the right model of Chow rings for singular varieties.

\subsection{Structure of the argument}

Let $k$ be a field of characteristic $p \geq 0$, and $l$ a prime number invertible in $k$. Our aim is to find a singular variety $X$, and an element $a \in \CH^n(X)$ that is not expressible as a linear combination of cycle classes of projective derived-lci $X$-schemes. To do so, we observe that if $a$ were such a cycle class, then its image $b \in H^{2n,n}_\cdh(X;\Zb)$ in the cdh-local motivic cohomology would lift to a cdh-local algebraic cobordism class $\tilde b \in \MGL^{2n,n}_\cdh(X)$. As there are well-known obstructions to liftability of cohomology classes to $\MGL_\cdh$ coming from the motivic Steenrod algebra \cite{voevodsky:2003a}, finding an unliftable class $b$ is not difficult. Moreover, as the difference between $H_\mot$ and $H_\cdh$ is either $p$-complete (if $p>0$) or a $\Qb$ vector space (if $p = 0$) \cite[Theorem 1.5]{elmanto-morrow}, it suffices to find an unliftable torsion element $b \in H^{2n,n}_\cdh(X,\Zb)$ whose order is invertible in $k$, as such a class can always be lifted to an element of $H^{2n,n}_\mot(X,\Zb)$.

We then reduce this to a problem about motivic cohomology of smooth varieties (note that motivic cohomology and the cdh-local motivic cohomology groups coincide for smooth $k$-varieties \cite[Theorem 1.1]{elmanto-morrow}). Starting from a smooth quasi-projective variety $X$, we construct explicit singular affine varieties $X_n$ that are cdh-locally $\Ab^1$-homotopy equivalent to simplicial suspensions of $X$, so that
\begin{equation*}
    \tilde E^{i,j}(X; \Zb) \cong \tilde E^{i+n,j}(X_n; \Zb)
\end{equation*}
for any cdh-local $\Ab^1$-invariant cohomology theory $E$. Thus, it suffices to find a smooth quasi-projective variety $X$, and an unliftable torsion element $b \in H^{i,j}_\mot(X,\Zb)$ in \emph{any} bidegree, whose order is invertible in $k$.

Our argument then proceeds as follows:
\begin{enumerate}        
    \item Liftability of classes along $\vartheta_l: \MGL_\cdh^{i,j}(X) \to H^{i,j}_\cdh(X;\Zb/l)$ is well understood: non-vanishing of $Q_m(c)$, where $Q_m$ is the $m$th \textit{motivic Milnor operation}, is an obstruction for $c$ lifting along $\vartheta_l$ \cite{hoyois:2013}.

    \item If $c \in H^{i,j}_\mot(X; \Zb/l)$ is such that $Q_m\beta(c) \not = 0$, then $b=\tilde\beta(c) \in H^{i+1,j}_\mot(X; \Zb)$ is an $l$-torsion class that lifts $\beta(c)$, and therefore cannot be lifted to $\MGL^{i+1,j}(X)$ either. Here, $\beta$ and $\tilde\beta$ are the \textit{Bockstein operations}
    \begin{align*}
        \beta \colon H^{i,j}_\mot(X; \Zb/l) &\to H^{i+1,j}_\mot(X; \Zb/l) \\
        \tilde\beta \colon H^{i,j}_\mot(X; \Zb/l) &\to H^{i+1,j}_\mot(X; \Zb).
    \end{align*}
    Thus, we are reduced to finding a mod-$l$ motivic cohomology class $c$ on which the operation $Q_m\beta$ does not vanish.

    \item Finally, a class $c$ on which $Q_m\beta$ does not vanish can be found with relative ease in the mod-$l$ motivic cohomology of an approximation of the classifying stack $\Brm \mu_l \times \Brm \mu_l$ by a smooth quasi-projective variety $X$.
\end{enumerate}
The lift $\tilde a$ of this class to the motivic cohomology group $H^{2n,n}_\mot(X_k;\Zb) = \CH^n(X_k)$ of the singular variety $X_k$, where $k = 2j-i-1$, is not an integral combination of cycle classes of a projective derived-lci morphisms, thus proving Theorem~\ref{thm:main}.

\subsection{Conventions}

Throughout the article, we will freely use the language of $\infty$-categories and derived schemes. The standard reference for the theory of $\infty$-categories is \cite{HTT}. The standard references for the theory of derived schemes are \cite{lurie-thesis,SAG, HAG1, HAG2, GR}. Moreover, \cite[Section~2]{annala-base-ind-cob} and \cite[Section~3]{annala-thesis} provide brief introductions to derived algebraic geometry from a point of view that should be useful for the purposes of understanding this article.

We will denote motivic cohomology and its cdh-local variant by $H_\mot^{*,*}$ and $H_\cdh^{*,*}$, respectively. These two theories coincide on smooth varieties over a field. For the purpose of avoiding confusion, we aim to use the notation $H_\mot^{*,*}$ whenever only smooth varieties are considered. The algebraic cobordism spectrum on smooth $k$-varieties and its $\cdh$-local $\Ab^1$-invariant extension to finite type (derived) $k$-schemes are denoted by $\MGL^{*,*}$ and $\MGL^{*,*}_\cdh$, respectively. In particular, we will avoid writing $\MGL^{*,*}(X)$ for a singular variety, unless it is necessary for speculation. This is because there is no generally accepted definition of algebraic cobordism spectrum that satisfies enough of the expected properties to be deserving of the name.

We will denote by $\PCob^*$ and $\Omega^*$ the \textit{geometrically defined} precobordism and algebraic cobordism rings of quasi-projective derived $k$-schemes, as defined in e.g. \cite{annala-thesis,annala-base-ind-cob}. Conjecturally, $\Omega^*$ is to algebraic cobordism spectrum what the Chow ring $\CH^*$ is to motivic cohomology: an explicit description of the Chow line of the bigraded theory. In particular, for all derived schemes $X$, there should be natural identifications $\Omega^n(X) \cong \MGL^{2n,n}(X)$ that give rise to isomorphisms of graded rings. Such an identification was obtained by Marc Levine for smooth quasi-projective varieties in characteristic 0 \cite{levine:2009}. The main obstruction of extending such a result to smooth varieties in positive characteristic is the lack of localization exact sequence for $\Omega^*$ in this generality. Recently, a part of this sequence has been realized by the first-named-author after inverting the characteristic in the coefficients \cite[Theorem~4.19]{annala-spivak}.

\subsection*{Acknowledgements} We would like to thank Longke Tang for sharing his results about Gysin maps in non-$\Ab^1$-invariant motivic homotopy theory. Moreover, we would like to thank Elden Elmanto for answering numerous questions about Elmanto--Morrow's construction of motivic cohomology, and in particular for pointing out that the projective bundle formula for derived schemes can be deduced from \cite[Construction~4.38]{elmanto-morrow} (see Remark~\ref{rem:DerivedPBF}). We would also like to thank Shane Kelly and Adeel Khan for comments on the draft of this article, which helped to improve the introduction, and Marc Hoyois for pointing out a subtlety about the cdh-topology on derived schemes. \rev{Finally, we would like to thank the anonymous referee for careful comments that helped to improve the exposition of the paper.} This paper was written while the first-named-author was in residence at the Institute for Advanced Study in Princeton, and supported by the National Science Foundation (Grant No. DMS-1926686).

\section{Background and preliminary results}

Here, we recall necessary background and preliminary results. In Section~\ref{subsect:EMCoh}, we recall the basic properties of Elmanto--Morrow's construction of motivic cohomology. In Section~\ref{subsect:LongkeGysin}, we recall the basic properties of the Gysin pushforwards along derived regular embeddings constructed by Longke Tang. In Section~\ref{subsect:LongkeGysinProj}, we define Gysin pushforwards along projective lci morphisms of derived schemes. In Section~\ref{subsect:PCob}\rev{,} we recall the definition and the basic properties of the universal precobordism rings $\PCob^*(X)$. Finally, in Section~\ref{subsect:CycleMap}, we use the theory of Gysin pushforwards constructed in Section~\ref{subsect:LongkeGysinProj} to construct \emph{cycle maps} from $\PCob^*$ to sufficiently well-behaved cohomology theories that satisfy excision for derived blowups.

Throughout this section, unless otherwise mentioned, $S$ is an arbitrary base derived scheme, and $k$ is a field. Throughout the section, all schemes are implicitly assumed to be defined over the base $S$, and all products mean fiber products over $S$. 

\subsection{Motivic cohomology of equicharacteristic schemes}\label{subsect:EMCoh}

In \cite{elmanto-morrow}, Elmanto and Morrow construct motivic complexes $\Zb(i)^\mot$ of derived $k$-schemes, therefore also defining the motivic cohomology groups of derived $k$-schemes 
\[
H^{i,j}_\mot(X;\Zb) := H^{i}\big(X; \Zb(j)^\mot\big).
\]
Their motivic complexes arise as graded pieces of a {motivic filtration} on algebraic $K$-theory, and as a result, there is a motivic Atiyah--Hirzebruch spectral sequence that realizes the expected relationship between motivic cohomology groups and the higher \rev{(both positive and negative)} algebraic $K$-theory groups. This, together with other good properties of Elmanto--Morrow's construction \cite[Theorem~1.1]{elmanto-morrow}, provides extremely strong evidence that they have indeed constructed the correct motivic cohomology theory. On smooth $k$-schemes, the motivic complexes agree with those defined by Bloch \cite{bloch:1986}. Kelly--Saito \cite{kelly-saito} offer an alternative construction of the motivic complexes\rev{, at least for noetherian $k$-schemes, as a pro-cdh-local left Kan extension from smooth $k$-schemes of Bloch's cycle complexes.}

Here, we recall the basic properties of the motivic complexes that are necessary for the purposes of this paper. We start with recalling the relationship between the motivic complexes and their $\Ab^1$-invariant counterpart (see \cite[Theorem~1.5]{elmanto-morrow}). This is needed later, when we want to lift classes from the cdh-local motivic cohomology to motivic cohomology.

\begin{thm}\label{thm:MotToCDHlsurj}
If $l \in \Zb$ is invertible in $k$, then the cdh-sheafification map
\[
\Zb(i)^\mot \rev{\to} \Zb(i)^\cdh ,
\]
induces a surjection
\[
H^{i,j}_\mot(X; \Zb)_l \tto H^{i,j}_\cdh (X ; \Zb)_l
\]
between $l$-torsion subgroups in the motivic cohomology and the cdh-local motivic cohomology for any derived $k$-scheme $X$.
\end{thm}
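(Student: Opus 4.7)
The plan is to study the fiber sequence of motivic complexes
\[ F(j) \longrightarrow \Zb(j)^\mot \longrightarrow \Zb(j)^\cdh, \]
where $F(j) := \Fib\bigl(\Zb(j)^\mot \to \Zb(j)^\cdh\bigr)$. The content of \cite[Theorem~1.5]{elmanto-morrow}, alluded to in the paragraph preceding the theorem statement, is that $F(j)$ is $p$-complete when $\mathrm{char}(k)=p>0$ and is a $\Qb$-complex when $\mathrm{char}(k)=0$. Combined with the hypothesis that $l$ is invertible in $k$, this forces $l$ to act invertibly on $F(j)$: in characteristic $0$ this is immediate, and in positive characteristic $l$ is coprime to $p$, hence a unit in $\Zb_p$, and therefore acts invertibly on every $p$-complete spectrum. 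Consequently, writing $H^{i,j}_F(X) := H^i\bigl(X; F(j)\bigr)$, all such groups are uniquely $l$-divisible, and in particular contain no $l$-torsion.

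Given an $l$-torsion class $\alpha \in H^{i,j}_\cdh(X;\Zb)$, the long exact sequence
\[ \cdots \to H^{i,j}_F(X) \xrightarrow{\iota} H^{i,j}_\mot(X;\Zb) \xrightarrow{\pi} H^{i,j}_\cdh(X;\Zb) \xrightarrow{\delta} H^{i+1,j}_F(X) \to \cdots \]
shows that the boundary $\delta(\alpha)$ is $l$-torsion in an $l$-torsion-free group, hence $\delta(\alpha)=0$. Thus $\alpha$ admits some lift $\widetilde\alpha \in H^{i,j}_\mot(X;\Zb)$; this lift need not itself be $l$-torsion, but it can be corrected. Indeed, $\pi(l\widetilde\alpha)=l\alpha=0$, so by exactness $l\widetilde\alpha=\iota(\gamma)$ for some $\gamma \in H^{i,j}_F(X)$. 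Unique $l$-divisibility of $H^{i,j}_F(X)$ yields $\gamma'$ with $l\gamma'=\gamma$, and the class $\beta := \widetilde\alpha - \iota(\gamma')$ then satisfies $\pi(\beta)=\alpha$ and $l\beta=0$, which is the desired $l$-torsion lift.

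The only substantive ingredient is the $l$-invertibility of $F(j)$, which is precisely what the cited Elmanto--Morrow result provides; once that input is granted, both the liftability of $\alpha$ and its adjustment to an $l$-torsion lift follow by a formal diagram chase in the long exact sequence. The same argument also proves surjectivity at the level of $l$-power torsion subgroups if that is the intended interpretation of the subscript $l$, since the invertibility of $l$ on $F(j)$ makes $H^{*,*}_F(X)$ uniquely $l^n$-divisible for every $n$.
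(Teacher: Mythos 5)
Your proof is correct and is essentially the argument the paper implicitly relies on. The paper itself provides no proof of this theorem: it cites Elmanto--Morrow's Theorem~1.5, whose actual content is precisely the description of the fiber $F(j) := \Fib\bigl(\Zb(j)^\mot \to \Zb(j)^\cdh\bigr)$ as $p$-complete in characteristic $p>0$ and rational in characteristic $0$. Your diagram chase through the resulting long exact sequence — observing that $l$ acts invertibly on $F(j)$, hence $H^{*}(X;F(j))$ is uniquely $l$-divisible and torsion-free, then using this both to kill the boundary $\delta(\alpha)$ and to correct an arbitrary lift $\widetilde\alpha$ to an $l$-torsion lift — is the standard and intended derivation of the stated surjection from the cited ingredient.
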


Moreover, the motivic complexes satisfy the following structural properties \cite[Theorem~1.1, Remark~5.25]{elmanto-morrow}.

\begin{thm}
Let $X$ be a \rev{qcqs} derived $k$-scheme. Then the motivic complexes satisfy the following properties:
\begin{enumerate}
    \item \emph{Relationship to Picard group:} there is a natural isomorphism $\mathrm{Pic}(X) \cong H^{2,1}_\mot(X;\Zb)$;
    \item \emph{Projective bundle formula:} pullback, multiplied with powers of the first Chern class of the tautological line bundle on $\Pb^n_X$, induces an equivalence
    \[
    \bigoplus_{i = 0}^{n} \Zb(j-i)^\mot(X)[-2i] \stackrel{\sim}{\to} \Zb(j)^\mot(\Pb^n_X);
    \]
    \item \emph{Derived blowup excision:} if $Z \hook X$ is a derived regular embedding, then the derived blowup square in the sense of \cite{khan-rydh} induces a Cartesian square
    \[
    \begin{tikzcd}
        \Zb(i)^\mot(E) & \arrow[l] \Zb(i)^\mot(\bl_Z(X)) \\
        \Zb(i)^\mot(Z) \arrow[u] & \Zb(i)^\mot(X) \arrow[u] \arrow[l]
    \end{tikzcd}
    \]
    in $D(\Zb)$.
\end{enumerate}
\end{thm}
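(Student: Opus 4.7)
The plan is to address the three items in turn, leveraging the case of smooth varieties (where the Elmanto--Morrow complexes agree with Bloch's higher Chow complexes) and the organizing principle of the Elmanto--Morrow construction, namely that $\Zb(i)^\mot$ arises as the associated graded of a motivic filtration on algebraic $K$-theory.

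For (1), I would identify $H^{2,1}_\mot(X;\Zb)$ with the weight-one graded piece of the motivic filtration on $K_0(X)$ via the motivic Atiyah--Hirzebruch spectral sequence, and compose with the determinant / first-Chern-class map $\mathrm{Pic}(X) \to K_0(X)$ landing in that weight-one piece. On smooth $k$-schemes this composition is an isomorphism by classical results of Bloch, and the identification extends to arbitrary derived $k$-schemes either by left Kan extension from $\Sm_k$ along the lines of Kelly--Saito, or by a direct check that the determinant already hits the weight-one part on the nose at the level of $K_0$.

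For (2), the projective bundle formula, the input is that $K$-theory itself satisfies the projective bundle formula for $\Pb^n_X \to X$ over an arbitrary derived base (an extension of Thomason, realized derivedly in \cite[Construction~4.38]{elmanto-morrow} as mentioned in the acknowledgments). The first Chern class of $\Oc(1)$ lifts canonically to $H^{2,1}_\mot$, so the multiplication-by-powers-of-$c_1$ decomposition of $K(\Pb^n_X)$ is compatible with the motivic filtration, and passing to the associated graded yields the claimed decomposition of $\Zb(j)^\mot(\Pb^n_X)$.

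For (3), derived blowup excision, my plan is to deduce the statement from the analogous property of algebraic $K$-theory of derived schemes, which is known for derived regular embeddings by Khan--Rydh. Concretely, one shows that the motivic filtration is compatible with the derived blowup square, so that the Cartesian square of $K$-theory spectra induces Cartesian squares on every graded piece $\Zb(i)^\mot(-)$. The naturality along $Z \hookrightarrow X$ is immediate; the work lies in controlling the filtration along $E \hookrightarrow \bl_Z(X)$, where $E = \Pb(N_{Z/X})$. Here the plan is to use item~(2) applied to $E$ together with the projective bundle structure, and, if that is not enough, to reduce to the split case by deformation to the derived normal cone, where the blowup square degenerates to the zero-section embedding $Z \hookrightarrow N_{Z/X}$ and everything can be expressed in terms of the projective bundle formula on $\Pb(N_{Z/X} \oplus \Oc)$. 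I expect this compatibility of the motivic filtration with the blowup square to be the main obstacle: it is the one place where ``cdh-naturality'' of the filtration, rather than just its existence on individual schemes, is genuinely needed.
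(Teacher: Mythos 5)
This theorem is, in the paper, essentially a citation: items (1) and (3) for derived $k$-schemes are taken directly from Elmanto--Morrow (Theorem~1.1 and Remark~5.25 of their paper), and the paper supplies an argument only for extending item (2) from classical schemes, where Elmanto--Morrow actually state it, to derived ones (Remark~\ref{rem:DerivedPBF}). So your sketches for (1) and (3) are addressing things the paper does not itself prove; that is not wrong, but the real point of comparison is your treatment of (2).

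There your route differs substantially from the paper's. You propose to establish directly that the motivic filtration on $K$-theory of a derived scheme is compatible with the $K$-theoretic projective bundle decomposition over a derived base, and then pass to associated graded pieces. The paper instead invokes the Cartesian squares obtained from Construction~4.38 of Elmanto--Morrow, comparing $\Zb(j)^\mot(X)$ with $\Zb(j)^\mot(X_\cl)$, with the ``difference'' terms given by Hodge-completed derived de Rham cohomology in characteristic zero or syntomic cohomology in positive characteristic. Since three of the four corners of each square are already known to satisfy the projective bundle formula (the classical motivic corner by Elmanto--Morrow's Theorem~5.24, the de Rham corner via the K\"unneth formula for derived Hodge cohomology, and the syntomic corner via the argument in \cite{AHI:atiyah}), the fourth corner $\Zb(j)^\mot(X)$ inherits it by a two-out-of-three/three-out-of-four argument. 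This cleanly reduces the derived case to the classical one plus two well-understood auxiliary theories, sidestepping the need to control the motivic filtration along $\Pb^n_X \to X$ directly. Your plan asks you instead to prove something close to a filtered projective bundle formula for $K$-theory of derived schemes, which is genuinely more work and is exactly the kind of statement the paper's argument is designed to avoid; you should be aware that the Elmanto--Morrow filtration is not simply ``the obvious filtration compatible with the $K$-theory PBF,'' so that compatibility is not automatic and would need its own argument.
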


\begin{rem}\label{rem:DerivedPBF}
Elmanto and Morrow state the projective bundle formula only for classical schemes \cite[Theorem~5.24]{elmanto-morrow}. However, the result is true for \rev{qcqs} derived schemes as well. To see this, recall that by passing to associated graded objects in \cite[Construction~4.38]{elmanto-morrow}, we obtain cartesian squares
\[
\begin{tikzcd}
    \Zb(j)^\mot(X) \arrow[d] \arrow[r] & R\Gamma (X, \widehat{L\Omega}^{\geq j}_{- / \Qb}) \arrow[d]\\
    \Zb(j)^\mot(X_\cl) \arrow[r] & R\Gamma (X_\cl, \widehat{L\Omega}^{\geq j}_{- / \Qb})
\end{tikzcd}
\quad \mathrm{and} \quad
\begin{tikzcd}
    \Zb(j)^\mot(X) \arrow[d] \arrow[r] & \Zb_{\rev{p}}(j)^\syn(X) \arrow[d] \\
    \Zb(j)^\mot(X_\cl) \arrow[r] & \Zb_{\rev{p}}(j)^\syn(X_\cl)
\end{tikzcd}
\]
of spectra (first if $X$ has characteristic 0 and the second if $X$ has positive characteristic \rev{$p$}). In either case, all other terms in the square except the top left corner are known to satisfy \rev{the} projective bundle formula. For motivic cohomology of classical schemes this is \cite[Theorem~5.24]{elmanto-morrow}. For the Hodge-completed derived de Rham cohomology with its Hodge filtration this follows from the projective bundle formula for derived Hodge cohomology, which in turn follows from the computation of the Hodge cohomology of $\Pb^n_\Qb$, as derived Hodge cohomology satisfies Künneth formula \cite[Remark~B.3]{bhatt-lurie:apc}. For syntomic cohomology this follows from e.g. \cite[Proposition~6.7]{AHI:atiyah} by passing to associated graded objects. 
\end{rem}

\subsection{Gysin maps along derived regular embeddings}\label{subsect:LongkeGysin}

In \cite{Tang-Gysin}, Longke Tang constructs \textit{Gysin pushforward maps} along derived regular embeddings for essentially all cohomology theories on derived schemes that satisfy Nisnevich descent and excision in derived blowups \cite{khan-rydh}. Below, we formulate a consequence of Tang's result that is sufficient for the purposes of our paper. We will denote by $\MS^\dbe_S$ a version of motivic spectra over base $S$ \cite{AHI} that represents those cohomology theories on almost finite presentation (afp) derived $S$-schemes that satisfy Nisnevich descent and excision in derived blowups in the sense of Khan--Rydh \cite{khan-rydh} (see Appendix~\ref{sect:MSdbe}). As the motivic cohomology constructed by Elmanto--Morrow is representable in $\MS^\dbe_k$ (Appendix~\ref{sect:Hrep}), we obtain Gysin pushforwards along derived regular embeddings for it.

\begin{thm}[Tang]
Let $E \in \MS^\dbe_S$ be an oriented and homotopy commutative ring object. Then, if $i \colon Z \hook X$ is a derived regular embedding of virtual codimension $c$, there exists \emph{Gysin pushforward} homomorphisms
\[
i_!\colon E^{a,b}(Z) \to E^{a+2c,b+c}(X).
\]
Moreover, the Gysin maps satisfy the following properties:
\begin{enumerate}
    \item \emph{Functoriality:} $\Id_! = \Id$ and if $i \colon Z \hook X$ and $j \colon X \hook Y$ are derived regular embeddings of constant virtual codimension,\footnote{\rev{A derived regular embedding $i \colon Z \hook X$ is of \emph{constant virtual codimension $r$} if, locally around $Z$, it is the derived vanishing locus of $r$ functions on $X$. It is of \emph{constant virtual codimension} if it is of constant virtual codimension $r$ for some $r \in \Nb$.}} then $(j \circ i)_! = j_! \circ i_!$.
    \item \emph{Base change:} If
    \[
    \begin{tikzcd}
        Z' \arrow[d]{}{p'} \arrow[r,hook]{}{i'} & X' \arrow[d]{}{p} \\
        Z \arrow[r,hook]{}{i} & X
    \end{tikzcd}
    \]
    is a Cartesian square of afp derived $S$-schemes and $i$ (hence $i'$) is a derived regular embedding, then $p^* \circ i_! = i'_! \circ p'^*$.
    \item \emph{Projection formula:} The formula $i_!(i^*(\alpha) \cdot \beta) = \alpha \cdot i_!(\beta)$ holds for all $\alpha \in E^{a,b}(X)$ and $\beta \in E^{a',b'}(Z)$.
    \item \emph{Euler classes:} If $i \colon Z \hook X$ is the derived vanishing locus of a global section of a vector bundle $\Ec$ of rank $r$ on $X$, then $i_!(1_Z) = c_r(\Ec) \in E^{2r,r}(X)$\rev{, where $c_r(\Ec)$ is the top Chern class which is defined in the standard fashion using projective bundle formula, see e.g. \cite[Definition~4.4.1]{annala-iwasa:MotSp}.}
    \item \emph{Naturality:} If $\theta: E \to F$ is an orientation preserving map of homotopy commutative ring objects in $\MS_S^\dbe$, then the squares
    \[
        \begin{tikzcd}
             E^{a,b}(Z) \arrow[r]{}{i_!} \arrow[d]{}{\theta} & E^{a+2c,b+c}(X) \arrow[d]{}{\theta} \\
             F^{a,b}(Z) \arrow[r]{}{i_!} & F^{a+2c,b+c}(X)
        \end{tikzcd}
    \]
    commute.
\end{enumerate}
\end{thm}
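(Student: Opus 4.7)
The plan is to first construct the Gysin pushforward for the zero section of a vector bundle, then extend to arbitrary derived regular embeddings via deformation to the normal bundle, and finally verify the five listed properties.

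For the zero section $s \colon Z \hook V$ of a rank-$r$ vector bundle, I would exploit that orientation and homotopy commutativity of $E$ force the projective bundle formula for $\Pb(V \oplus \Oc)$, and that $E$ belongs to $\MS_S^\dbe$, so the derived blowup square of $V$ along the zero section is excisive. Combined with the identification $\Bl_Z(V) = \mathrm{Tot}(\Oc_{\Pb(V)}(-1))$, this produces a Thom-type isomorphism $E^{a,b}(Z) \isomto \tilde E^{a+2r,b+r}(V/(V \setminus Z))$, and composing with the natural map from $V/(V \setminus Z)$ to a suspension of $V_+$ yields $s_!$.

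For a general derived regular embedding $i \colon Z \hook X$ with normal bundle $N$, I would use the derived deformation to the normal bundle, namely the derived scheme $M$ obtained by removing the strict transform of $X \times \{0\}$ from $\Bl_{Z \times \{0\}}(X \times \Ab^1)$. Its generic fibre is $X$, its special fibre is the total space of $N$, and $Z \times \Ab^1 \hook M$ specializes to the zero section. Applying derived blowup excision to $M$ shows that the pushforward along $Z \hook X$ and along the zero section $Z \hook N$ are canonically identified in $E$-cohomology, and this identification defines $i_!$.

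The remaining properties would then be verified as follows. Base change is inherited from the functoriality of derived blowups and of the deformation space under pullback. The projection formula is immediate from the fact that $i_!$ is implemented by capping with a Thom class, which acts as a module map over the ambient cohomology. The Euler class formula drops out of the zero-section case via the standard expression of the top Chern class through the projective bundle formula. Naturality under an orientation-preserving morphism $\theta \colon E \to F$ is automatic, since every ingredient---the Thom class, the blowup excision square, and the deformation---is defined already at the level of $\MS_S^\dbe$. I would expect functoriality under composition $Z \hook X \hook Y$ to be the main obstacle: it requires a simultaneous deformation to both $N_{Z/X}$ and $N_{X/Y}$ and a careful verification that the resulting Thom isomorphisms compose to the Thom isomorphism of the total normal bundle $N_{Z/Y}$. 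This double deformation argument is where the derived framework is most valuable, since the normal bundle of a derived regular embedding is always a genuine vector bundle and no normal cone subtleties arise.
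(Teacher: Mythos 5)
The paper does not prove this theorem; it is quoted as a consequence of Longke Tang's results in \cite{Tang-Gysin}, so there is no in-paper argument to compare yours against.

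More substantively, your sketch reproduces the classical deformation-to-the-normal-bundle construction, and that route fails in the present setting for a structural reason: $\MS_S^\dbe$ is built from Nisnevich descent and derived blowup excision only and is \emph{not} $\Ab^1$-invariant (Elmanto--Morrow motivic cohomology, which is not $\Ab^1$-invariant, is representable in it, per Appendix~\ref{sect:Hrep}). Your key step --- forming the deformation space $M \to \Ab^1$ with generic fibre $X$ and special fibre $N_{Z/X}$ and then ``applying derived blowup excision'' to transport the zero-section Gysin map across it --- needs to compare $E(M)$ with $E$ of its two fibres. The classical argument does this via $\Ab^1$-contractibility of affine bundles together with an open-complement localization sequence; neither is available here. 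The fibre inclusions $X \hook M$ and $N_{Z/X} \hook M$ are Cartier divisors, so blowing up along them is trivial and derived blowup excision yields no information, and $\MS_S^\dbe$ has no general localization sequence for open complements. The same gap already appears in your zero-section step, where identifying $E^{a,b}(Z)$ with a Thom-type group $\tilde E^{a+2r,b+r}\big(V/(V \setminus Z)\big)$ presupposes exactly such an open-localization fibre sequence. The content of Tang's theorem is a construction of the Gysin transfer directly out of the derived blowup square of $Z \hook X$ together with the projective bundle formula, bypassing both deformation to the normal bundle and any appeal to $\Ab^1$-invariance; that workaround is the substantive part your outline does not reach.
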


\subsection{Gysin maps along projective derived-lci maps}\label{subsect:LongkeGysinProj}

Here, we recall how to extend the Gysin maps along derived regular embeddings to Gysin maps along all \textit{projective derived-lci maps}, i.e., maps $f \colon Z \to X$ that can be factored as
\[
Z \stackrel{i}{\hook} \Pb^n_X \stackrel{p}{\to} X
\]
where $p$ is the canonical projection and $i$ is a derived regular embedding. The idea is quite simple: Gysin pushforwards along projections $p\colon \Pb^n_X \to X$ can be constructed by hand, because oriented cohomology theories satisfy the projective bundle formula. One can then define $f_!$ as the composition $p_! \circ i_!$, and verify that it does not depend on the chosen factorization of $f$ into $i$ and $p$. Our treatment follows closely that of \cite[Section~5]{deglise:2008}. For simplicity, we assume the base derived scheme $S$ to be qcqs. Throughout the section, $E \in \MS^\dbe_S$ will denote an oriented and homotopy commutative ring object, and $E^*$ will denote the graded ring with graded pieces $E^i = E^{2i,i}(S)$.

The first step is to define the Gysin morphism along projections $p: \Pb^n_X \to X$ of relative projective spaces. The coefficients defining $p_!$ depend on the formal group law of $E$, which we recall next. 

\begin{defn}
By \cite[4.4.2]{annala-iwasa:MotSp} there exists a unique power series
\[
F_E(x,y) = \sum_{i,j \geq 0} a_{ij} x^i y^j \in E^*[[x,y]]
\]
with $a_{00} = 0$, $a_{10} = a_{01} = 1$, and $a_{ij} \in E^{1-i-j}$, and such that for all qcqs afp derived $S$-schemes $X$, and line bundles $\Ls_1$ and $\Ls_2$ on $X$, the equality
\[
c_1(\Ls_1 \otimes \Ls_2) = F_E(c_1(\Ls_1), c_1(\Ls_2)) \in E^{2,1}(X)
\]
holds. Note that the right hand side of the above equation is well defined, as first Chern classes of line bundles are nilpotent under the above assumptions on $X$ (see the proof of \cite[Lemma~3.1.8]{annala-iwasa:MotSp}).

The power series $F_E$ is a \emph{formal group law} i.e., it has the following properties:
\begin{enumerate}
    \item \emph{Unitality}: $F_E(x,0) = F_E(0,x) = x$.
    \item \emph{Commutativity}: $F_E(x,y) = F_E(y,x)$.
    \item \emph{Associativity}: $F_E(F_E(x,y),z) = F_E(x, F_E(y,z))$.
\end{enumerate}
\end{defn}

The coefficients that are used to define $p_!$ are closely related to the class of the diagonal embedding of projective spaces. Next, we identify these classes explicitly in terms of the formal group law of $E$.

\begin{lem}
We have the equality
\[
\Delta_!(1_{\Pb^n}) = \sum_{i,j = 0}^n a_{1,i+j-n} x_1^i x_2^j \in E^{2n,n}(\Pb^n \times \Pb^n),
\]
where $\Delta \colon \Pb^n \to \Pb^n \times \Pb^n$ is the diagonal embedding, $x_1 = c_1(\Oc(1,0))$, and $x_2 = c_1(\Oc(0,1))$. 
\end{lem}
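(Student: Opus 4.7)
The plan is to identify the diagonal as the derived vanishing locus of an explicit section of a rank-$n$ bundle on $\Pb^n \times \Pb^n$ and apply the Euler class property from the preceding theorem. Specifically, letting $Q$ denote the tautological rank-$n$ quotient bundle on $\Pb^n$ (with Euler sequence $0 \to \Oc(-1) \to \Oc^{n+1} \to Q \to 0$), I would consider the composition
\[
p_1^*\Oc(-1) \hookrightarrow \Oc^{n+1} \twoheadrightarrow p_2^*Q,
\]
which defines a section of $V := p_1^*\Oc(1) \otimes p_2^*Q$ whose derived vanishing locus is precisely $\Delta$ (it vanishes at $(x,y)$ exactly when the tautological line at $x$ lies in that at $y$, and it has the expected codimension $n$). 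The Euler class property then yields $\Delta_!(1) = c_n(V)$. By the splitting principle for tensoring with a line bundle, if $\beta_1, \ldots, \beta_n$ are the formal Chern roots of $Q$, then $V$ has formal Chern roots $F(x_1, \beta_i)$, hence
\[
c_n(V) = \prod_{i=1}^n F(x_1, \beta_i).
\]

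The remaining step is the universal algebraic identity
\[
\prod_{i=1}^n F(x_1, \beta_i) = \sum_{i, j = 0}^n a_{1, i+j-n} x_1^i x_2^j
\]
in $E^*[x_1, x_2]/(x_1^{n+1}, x_2^{n+1})$, where the symmetric functions in $\beta_i$ on the left are first rewritten in terms of $e_k(\beta) = c_k(Q)$ and then in terms of $x_2$ via the relation $c(Q)(t) \cdot (1 + [-1]_F(x_2) t) = 1$ modulo $x_2^{n+1}$ coming from the Euler sequence. I would verify this directly by expanding the product and matching coefficients of $x_1^i x_2^j$. As sanity checks, the additive case $F(x,y) = x+y$ recovers the classical Chow-theoretic formula $[\Delta] = \sum_{i+j = n} x_1^i x_2^j$, and the multiplicative case $F(x,y) = x+y-xy$ can be checked by hand for small $n$.

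The hardest part will be this final formal group law computation, which is a statement purely about the Lazard ring. A lighter alternative route is the following: by the projection formula and $\Delta^*(x_1) = \Delta^*(x_2) = c_1(\Oc(1))$, one has $(x_1 - x_2) \Delta_!(1) = \Delta_!(\Delta^*(x_1 - x_2)) = 0$, so $\Delta_!(1)$ lies in the annihilator of $x_1 - x_2$ inside $E^*[x_1, x_2]/(x_1^{n+1}, x_2^{n+1})$, which a direct dimension count shows to be a free $E^*$-submodule of rank $n+1$. Most of the coefficients can then be pinned down inductively via the identity $\Delta_n \circ j = (j \times j) \circ \Delta_{n-1}$ for the hyperplane inclusion $j \colon \Pb^{n-1} \hookrightarrow \Pb^n$ (combined with the projection formula to rewrite $x_1 \Delta_{n,!}(1)$ and $x_2 \Delta_{n,!}(1)$), starting from the base case $\Delta_!(1_{\Pb^1}) = c_1(\Oc(1,1)) = F(x_1, x_2)$, with the single remaining top coefficient read off from the Euler class formula above.
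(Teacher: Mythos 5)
Your geometric reduction is essentially the same as the paper's: both realize the diagonal as the derived vanishing locus of a rank-$n$ bundle built from the composition of the tautological sub- and quotient-bundle maps (yours is $p_1^*\Oc(1)\otimes p_2^*Q$, the paper's is the symmetric $p_1^*Q\otimes p_2^*\Oc(1)$), and then invoke the Euler class property of the Gysin pushforward to reduce to a Chern class identity. The divergence comes at the next step. The paper observes that the coefficients of $c_n$ in the monomials $x_1^i x_2^j$ are universal polynomials in the formal group law coefficients, so that one may reduce to the Lazard ring, and then verifies the identity by computing the diagonal class in $\MU_{2n}(\CP^n\times\CP^n)$ via the Hopf algebra structure of $\MU_*(\CP^\infty)$ worked out in the appendix. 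You instead propose a direct algebraic verification using Chern roots, or alternatively an annihilator-plus-induction argument. Both strategies would, if carried out, give a more self-contained argument than the paper's detour through complex bordism.

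However, there is a genuine gap: you never actually establish the formal group law identity
\[
\prod_{i=1}^n F(x_1,\beta_i) \;=\; \sum_{i,j=0}^n a_{1,i+j-n}\,x_1^i x_2^j
\]
in $E^*[x_1,x_2]/(x_1^{n+1},x_2^{n+1})$, and this is the entire content of the lemma beyond the formal Gysin machinery. You acknowledge it as the hardest part and say you ``would verify this directly by expanding the product and matching coefficients,'' but that expansion is not routine and is not carried out. The ``lighter alternative route'' is also only sketched: the claim that the annihilator of $x_1-x_2$ in the relevant graded piece is free of rank $n+1$ is plausible but unproved, and it is not spelled out which coefficients the inductive step via $j\colon\Pb^{n-1}\hook\Pb^n$ determines and which the ``top coefficient'' from the Euler class supplies, nor why these together pin down all of them. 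As written, the key identity is asserted, not proved — exactly where the paper brings in the explicit $\MU_*(\CP^\infty)$ coproduct formula.
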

\begin{proof}
Consider the canonical exact sequence
\[0 \to \Oc(-1,0) \stackrel{\iota_1}{\hook} \Oc^{n+1} \stackrel{\psi_1}{\tto} Q_1 \to 0\]
of vector bundles on $\Pb^n \times \Pb^n$, and denote by $\phi$ the composition $\Oc(0,-1) \stackrel{\iota_2}{\hook} \Oc^{n+1} \stackrel{\psi_1}{\tto} Q_1$. Then $\phi$ corresponds to a global section of $Q_1(0,1)$, whose derived vanishing locus is the diagonal $\Delta \colon \Pb^n \hook \Pb^n \times \Pb^n$. Thus, $\Delta_!(1_{\Pb^n}) = c_n(Q_1(0,1))$.

By the basic properties of Chern classes,
\[
c_n(Q_1(0,1)) = \sum_{i,j = 0}^n \eta^{(n)}_{ij} x_1^i x_2^j 
\]
for some constants $\eta^{(n)}_{ij} \in E^{n - i - j}$ that only depend on the formal group law $F_E$. It therefore suffices to check the equality $\eta^{(n)}_{ij} = a_{1,i+j-n}$ in the universal case where $F$ is the formal group law of the Lazard ring \cite{lazard:1955}. Thus, it suffices to verify that the class of the diagonal embedding in the complex bordism group $\mathrm{MU}_{2n}(\CP^n \times \CP ^n)$ is given by the same formula. This can be proven using the explicit formulas for the structure constants of the Hopf algebra structure of $\MU_*(\CP^\infty)$ provided in Appendix~\ref{sect:MUCoprod} (Proposition~\ref{prop:MUCoprodFormula}).
\end{proof}

For notational simplicity, set $\eta_i = a_{1i}$, and consider the pair of inverse matrices
\[
M_n = 
\begin{bmatrix}
0 & \cdots & 0 & 1 \\
\vdots & \iddots & \iddots & \eta_1 \\
0 & \iddots & \iddots & \vdots \\
1 & \eta_1 & \cdots & \eta_n 
\end{bmatrix}
\qquad 
M_n^{-1} = 
\begin{bmatrix}
\eta'_n & \cdots & \eta'_1 & 1 \\
\vdots & \iddots & \iddots & 0 \\
\eta'_1 & \iddots & \iddots & \vdots \\
1 & 0 & \cdots &0 
\end{bmatrix}
\]
By definition, the equality
\begin{equation}\label{eq:EtaPairing}
    \sum_{i = 0}^n \eta_i \eta'_{n-i} = 
    \begin{cases}
    1 & \text{if $n=0$;} \\
    0 & \text{otherwise;}
    \end{cases}
\end{equation}
holds in $E^*$.

We are now ready to define Gysin maps along projection maps. Note that by the projective bundle formula, each element $\alpha \in E^{a,b}(\Pb^n_X)$ can be uniquely expressed in the form
\[
\alpha = p^*(\alpha_0) + p^*(\alpha_1) \cdot c_1(\Oc(1)) + \cdots + p^*(\alpha_n) \cdot c_1(\Oc(1))^n
\]
where $\alpha_i \in E^{a-2i,b-i}(X)$.

\begin{defn}[Gysin maps along $p \colon \Pb^n_X \to X$]
We define
\begin{equation*}
    p_!(\alpha) := \alpha_n + \alpha_{n-1} \cdot \eta'_1 + \cdots + \alpha_0 \cdot  \eta'_n \in E^{a-2n,b-n}(X).
\end{equation*}
This formula gives rise to group homomorphisms $p_!\colon E^{a,b}(\Pb^n_X) \to E^{a-2n,b-n}(X)$.
\end{defn}

\begin{lem}\label{lem:ProjGysinProps}
The Gysin maps along projections satisfy the following properties:
\begin{enumerate}
    \item \emph{Base change:} Given \rev{a} Cartesian square
    \[
    \begin{tikzcd}
        \Pb^n_Y \arrow[d]{}{f'} \arrow[r]{}{p'} & Y\arrow[d]{}{f} \\
        \Pb^n_X \arrow[r]{}{p} & X
    \end{tikzcd}
    \]
    then $f^* \circ p_! = p'_! \circ f'^*$.
    \item \emph{Projection formula:}  The formula $p_!(p^*(\alpha) \cdot \beta) = \alpha \cdot p_!(\beta)$ holds for all $\alpha \in E^{a,b}(X)$ and $\beta \in E^{a',b'}(\Pb^n_X)$.
    \item \emph{Double projections:} Given a Cartesian square
    \[
    \begin{tikzcd}
        \Pb^m_{\rev X} \times_{\rev X} \Pb^n_X \arrow[d]{}{p'} \arrow[r]{}{q'} & \Pb^n_X\arrow[d]{}{p} \\
        \Pb^m_X \arrow[r]{}{q} & X
    \end{tikzcd}
    \]
    then $q_! \circ p'_! = p_! \circ q'_!$.
    \item \emph{Compatibility with Gysin maps along regular embeddings:} Given a Cartesian square
    \[
    \begin{tikzcd}
        \Pb^n_Z \arrow[d,hook]{}{i'} \arrow[r]{}{p'} & Z\arrow[d,hook]{}{i} \\
        \Pb^n_X \arrow[r]{}{p} & X
    \end{tikzcd}
    \]
    then $i_! \circ p'_! = p_! \circ i'_!$.
    \item \emph{Naturality:} If $\theta: E \to F$ is an orientation preserving map of homotopy commutative ring objects in $\MS_S^\dbe$, then the squares
    \[
        \begin{tikzcd}
             E^{a,b}(\Pb^n_X) \arrow[r]{}{p_!} \arrow[d]{}{\theta} & E^{a-2n,b-n}(X) \arrow[d]{}{\theta} \\
             F^{a,b}(\Pb^n_X) \arrow[r]{}{p_!} & F^{a-2n,b-n}(X)
        \end{tikzcd}
    \]
    commute.
\end{enumerate}
\end{lem}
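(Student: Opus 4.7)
The plan is to derive all five properties by direct manipulation of the projective bundle decomposition together with the properties of Tang's Gysin maps recalled in Section~\ref{subsect:LongkeGysin}. The key recurring observation is that the coefficients $\eta'_j$ lie in the ground ring $E^*$, so they pull back to themselves along arbitrary morphisms and may always be pulled outside of a Gysin pushforward via the projection formula.

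I would dispatch (1), (2), and (5) first, as they are essentially formal consequences of the definition of $p_!$. For (1), the equality $f'^*\bigl(\Oc_{\Pb^n_X}(1)\bigr) = \Oc_{\Pb^n_Y}(1)$ together with stability of Chern classes under pullback shows that the canonical decomposition of $\alpha \in E^{*,*}(\Pb^n_X)$ pulls back to the corresponding decomposition of $f'^*(\alpha)$, and $p'_!(f'^*(\alpha)) = f^*(p_!(\alpha))$ is immediate. For (2), multiplying the decomposition of $\beta$ by $p^*(\alpha)$ absorbs $\alpha$ into each coefficient, from which the claim follows. For (5), $\theta$ sends $c_1^E$ to $c_1^F$ by orientation preservation, hence $\theta(F_E) = F_F$ as power series; therefore $\theta((\eta^E)'_j) = (\eta^F)'_j$, and $\theta$ commutes with both the decomposition and the formula defining $p_!$.

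For (4), I would take $\alpha = \sum_j p'^*(\alpha_j) \cdot c_1(\Oc(1))^j \in E^{*,*}(\Pb^n_Z)$ and compute both composites separately. For $p_! \circ i'_!$, applying Tang's projection formula to the pullback factor $c_1(\Oc(1))^j = i'^*(c_1(\Oc(1))^j)$ and then Tang's base change $i'_! \circ p'^* = p^* \circ i_!$ gives $i'_!(\alpha) = \sum_j p^*(i_!(\alpha_j)) \cdot c_1(\Oc(1))^j$, after which the definition of $p_!$ yields $\sum_j i_!(\alpha_{n-j}) \cdot \eta'_j$. For $i_! \circ p'_!$, unfold $p'_!(\alpha) = \sum_j \alpha_{n-j} \cdot \eta'_j$ and pull the scalars $\eta'_j$ out of $i_!$ using the projection formula, obtaining the same expression. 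For (3), I would expand an arbitrary $\gamma \in E^{*,*}(\Pb^m \times \Pb^n_X)$ in the canonical basis $\{x^i y^j\}$ with $x = c_1(\Oc(1,0))$ and $y = c_1(\Oc(0,1))$, and compute both composites term by term. Iterating the definition of $p_!$ and using (2) at each stage to move the scalars through, both $p_! \circ q'_!$ and $q_! \circ p'_!$ reduce to the symmetric expression $\sum_{i,j} \gamma_{ij} \cdot \eta'_{m-i} \cdot \eta'_{n-j}$.

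The main technical obstacle I anticipate is (4): the argument requires chaining Tang's base change and projection formula in the correct order across the Cartesian square while carefully tracking which Chern classes are pulled back from where. The remaining items become routine once the definition of $p_!$ has been unfolded.
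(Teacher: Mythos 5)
Your proof is correct and takes essentially the same route as the paper: claim~(1) via naturality of Chern classes, (2) and (3) by unfolding the projective bundle decomposition, (4) by chaining Tang's base change and projection formula for regular embeddings (together with naturality of Chern classes to identify $\Oc_{\Pb^n_Z}(1)$ with $i'^*\Oc_{\Pb^n_X}(1)$), and (5) from the fact that $\theta$ transports the formal group law and hence the inverse coefficients $\eta'_j$. The paper compresses each of these into a one-line citation of the relevant ingredient; your write-up simply makes those computations explicit, and no gap is introduced.
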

\begin{proof}
Claim (1) follows from naturality of Chern classes. Claims (2) and (3) are obvious. Claim (4) follows from naturality of Chern classes, base change, and projection formula for Gysin pushforwards along derived regular embeddings. Claim (5) follows from the fact that $\theta$ sends the coefficients of the formal group law of $E$ to the coefficients of the formal group law of $F$.
\end{proof}

Our next goal is to verify that the composition $p_! \circ i_!$ is independent of the factorization of $f \colon Z \to X$ into a derived regular embedding $i \colon Z \hook \Pb^n_X$ into a relative projective space $p \colon \Pb^n_X \to X$.

\begin{lem}\label{lem:GysCompatibilityWithSections}
If $i \colon X \hook \Pb^n_X$ is a section of $p \colon \Pb^n_X \to X$, then $p_! \circ i_! = \Id$.
\end{lem}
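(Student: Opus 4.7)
The plan is to reduce, via the projection formulas for both $i_!$ and $p_!$, to showing that $p_!(i_!(1_X)) = 1$, and then compute $i_!(1_X)$ by base change from the diagonal of $\Pb^n_S$. Since $p \circ i = \mathrm{Id}_X$ gives $i^* p^* = \mathrm{Id}$, the projection formula for the derived regular embedding $i$ yields
\[
i_!(\alpha) \;=\; i_!\bigl(i^*(p^*\alpha) \cdot 1_X\bigr) \;=\; p^*(\alpha) \cdot i_!(1_X)
\]
for any $\alpha \in E^{a,b}(X)$. Applying $p_!$ and invoking the projection formula from Lemma~\ref{lem:ProjGysinProps}(2) gives $p_!(i_!(\alpha)) = \alpha \cdot p_!(i_!(1_X))$, so it suffices to verify the identity $p_!(i_!(1_X)) = 1$.

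To compute $i_!(1_X)$, I would observe that the section $i \colon X \to \Pb^n_X = X \times_S \Pb^n_S$ is classified over $S$ by a morphism $\phi \colon X \to \Pb^n_S$, and fits into the Cartesian square
\[
\begin{tikzcd}
X \arrow[d,hook]{}{i} \arrow[r]{}{\phi} & \Pb^n_S \arrow[d,hook]{}{\Delta} \\
\Pb^n_X \arrow[r]{}{\phi \times \mathrm{Id}} & \Pb^n_S \times_S \Pb^n_S,
\end{tikzcd}
\]
whose bottom map sends $(x,y) \mapsto (\phi(x),y)$. Base change for Gysin maps along derived regular embeddings then gives $i_!(1_X) = (\phi \times \mathrm{Id})^*\bigl(\Delta_!(1_{\Pb^n_S})\bigr)$. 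Writing $\lambda := c_1(\phi^*\Oc(1)) \in E^{2,1}(X)$ and $\xi := c_1(\Oc_{\Pb^n_X}(1))$, the hyperplane classes pull back as $(\phi \times \mathrm{Id})^*(x_1) = p^*(\lambda)$ and $(\phi \times \mathrm{Id})^*(x_2) = \xi$, so the formula of the preceding lemma becomes
\[
i_!(1_X) \;=\; \sum_{i,j=0}^n a_{1,\,i+j-n} \cdot p^*(\lambda^i) \cdot \xi^j,
\]
with the convention that $a_{1,k} = 0$ for $k < 0$.

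Finally, I would apply the definition of $p_!$: the coefficient of $\xi^j$ in the expansion above is $\alpha_j = \sum_i a_{1,\,i+j-n}\lambda^i$, so reindexing by $k := i + j - n$ and recalling $\eta_k = a_{1,k}$ yields
\[
p_!(i_!(1_X)) \;=\; \sum_{j=0}^n \alpha_j \cdot \eta'_{n-j} \;=\; \sum_{i=0}^n \lambda^i \sum_{k=0}^i \eta_k \eta'_{i-k}.
\]
The inner sum collapses to $\delta_{i,0}$ by the pairing identity \eqref{eq:EtaPairing}, leaving $p_!(i_!(1_X)) = 1$, as required. The only mildly subtle step in this argument is verifying that the displayed square is Cartesian in the derived sense and correctly tracking the pullbacks of $x_1, x_2$ under $\phi \times \mathrm{Id}$; once this bookkeeping is in place, everything reduces to a direct manipulation of formal group law coefficients.
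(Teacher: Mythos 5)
Your proof is correct and follows essentially the same route as the paper. Both arguments reduce to $p_!(i_!(1_X)) = 1$ via the projection formula, then pull $\Delta_!(1_{\Pb^n})$ back across the Cartesian square identifying the section $i$ as a base change of the diagonal, and finally apply the pairing identity~\eqref{eq:EtaPairing}; the only cosmetic difference is that the paper also base changes the $p_!$ step (so the whole computation happens on $\Pb^n$ rather than on $\Pb^n_X$), whereas you compute $i_!(1_X)$ explicitly in $E^{2n,n}(\Pb^n_X)$ and then apply the definition of $p_!$ directly.
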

\begin{proof}
Let $\nu: X \to \Pb^n$ be the $\Pb^n$-component of $i$. We then have a cartesian diagram
\[
\begin{tikzcd}
X \arrow[r,hook]{}{i} \arrow[d]{}{\nu} & \Pb^n \times X \arrow[d]{}{\Id \times \nu} \arrow[r]{}{p} & X \arrow[d]{}{\nu} \\ 
\Pb^n \arrow[r,hook]{}{\Delta} & \Pb^n \times \Pb^n \arrow[r]{}{p'} & \Pb^n
\end{tikzcd}
\]
of derived schemes. By Eq.~\eqref{eq:EtaPairing}, $p'_! \circ \Delta_!(1_{\Pb^n}) = 1_{\Pb^n}$. By base change, this implies that $p_! \circ i_! (1_X) = 1_X$. Finally, by projection formula
\begin{align*}
    p_! (i_! (\alpha)) &= \alpha \cdot p_! ( i_!(1_X)) \\
    &= \alpha
\end{align*}
for all $\alpha \in E^{a,b}(X)$, finishing the proof of the claim.
\end{proof}

Thus, we can verify that the Gysin pushforward of a projective derived-lci map is independent of the chosen factorization.

\begin{lem}
If
\[
\begin{tikzcd}
    Z \arrow[r,hook]{}{i} \arrow[d,hook]{}{j} & \Pb^n_X \arrow[d]{}{p} \\
    \Pb^m_X \arrow[r]{}{q} & X
\end{tikzcd}
\]
commutes, then $p_! \circ i_! = q_! \circ j_!$.
\end{lem}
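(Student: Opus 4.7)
The plan is to find a common refinement of the two factorizations $f = p \circ i = q \circ j$ and reduce both compositions $p_! \circ i_!$ and $q_! \circ j_!$ to a single Gysin pushforward. The natural candidate is the derived regular embedding $k := (i,j) \colon Z \hook \Pb^n_X \times_X \Pb^m_X = \Pb^n \times \Pb^m_X$. Writing $\pi_1$ and $\pi_2$ for the two projections from this product to $\Pb^n_X$ and $\Pb^m_X$ respectively, the double-projection identity $p_! \circ \pi_{1,!} = q_! \circ \pi_{2,!}$ from Lemma~\ref{lem:ProjGysinProps}(3) reduces the claim to proving the two identities $\pi_{1,!} \circ k_! = i_!$ and $\pi_{2,!} \circ k_! = j_!$.

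By symmetry, it suffices to prove the first. The key observation is that $k$ admits the factorization $Z \xrightarrow{\Gamma_j} \Pb^m_Z = \Pb^m_X \times_X Z \xrightarrow{\tilde i} \Pb^n \times \Pb^m_X$, where $\Gamma_j$ is the graph of $j$ (a section of the structural projection $\pr_Z \colon \Pb^m_Z \to Z$) and $\tilde i$ is the base change of the regular embedding $i$ along $\pi_1$. Both $\Gamma_j$ and $\tilde i$ are derived regular embeddings, so functoriality of the Gysin pushforward along derived regular embeddings gives $k_! = \tilde i_! \circ \Gamma_{j,!}$.

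By construction, $\tilde i$ sits in the cartesian square
\[
\begin{tikzcd}
\Pb^m_Z \arrow[r,hook]{}{\tilde i} \arrow[d]{}{\pr_Z} & \Pb^n \times \Pb^m_X \arrow[d]{}{\pi_1} \\
Z \arrow[r,hook]{}{i} & \Pb^n_X
\end{tikzcd}
\]
in which both vertical maps are the projections of relative $\Pb^m$-bundles. This is precisely the setting of Lemma~\ref{lem:ProjGysinProps}(4), yielding $\pi_{1,!} \circ \tilde i_! = i_! \circ \pr_{Z,!}$. Since $\Gamma_j$ is a section of the $\Pb^m$-bundle $\pr_Z$, the preceding lemma gives $\pr_{Z,!} \circ \Gamma_{j,!} = \Id$, and combining these we obtain $\pi_{1,!} \circ k_! = i_!$. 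The identity $\pi_{2,!} \circ k_! = j_!$ follows by the same argument with the roles of $i$ and $j$ interchanged, and assembling everything yields $p_! \circ i_! = p_! \circ \pi_{1,!} \circ k_! = q_! \circ \pi_{2,!} \circ k_! = q_! \circ j_!$. The only real technical point is to recognize $k$ as the composition of a section of a trivial projective bundle and a base change of a derived regular embedding; once this is in place, the conclusion follows mechanically from the compatibilities in Lemma~\ref{lem:ProjGysinProps} and the preceding section lemma.
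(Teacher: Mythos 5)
Your proof is correct and is essentially the same as the paper's: you refine both factorizations through the embedding $(i,j) \colon Z \hook \Pb^n_X \times_X \Pb^m_X$ (the paper's $e$), reduce via Lemma~\ref{lem:ProjGysinProps}(3) to showing the projection-Gysin composites recover $i_!$ and $j_!$, and prove each via the factorization into a section of a trivial projective bundle followed by a base change of the original regular embedding, invoking Lemma~\ref{lem:ProjGysinProps}(4) and the section lemma. Your $\Gamma_j$, $\tilde i$, and $\pr_Z$ are the paper's $\tilde e$, $i'$, and $q''$ under the identification $\Pb^m_Z \cong \Pb^m \times Z$.
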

\begin{proof}
Consider the diagram
\[
\begin{tikzcd}
    Z \arrow[rd,hook]{}[description]{e} \arrow[rrd, hook, bend left]{}{i} & & \\
    & \Pb^m \times \Pb^n \times X \arrow[r]{}{q'} \arrow[d]{}{p'} & \Pb^n \times X \arrow[d]{}{p} \\
    & \Pb^m \times X \arrow[r]{}{q} & X.
\end{tikzcd}
\]
By symmetry and Lemma~\ref{lem:ProjGysinProps}(3), it suffices to show that $i_! = q'_! \circ e_!$. To do so, consider the diagram
\[
\begin{tikzcd}
    Z \arrow[rrd, hook, bend left]{}{e} \arrow[rd,hook]{}[description]{\tilde e}  &  & \\
    & \Pb^m \times Z \arrow[r,hook]{}{i'} \arrow[d]{}{q''} & \Pb^m \times \Pb^n \times X \arrow[d]{}{q'} \\
    & Z \arrow[r,hook]{}{i} & \Pb^n \times X.
\end{tikzcd}
\]
As $\tilde e$ is a section of $q''$, we have $q''_! \circ \tilde e_! = \Id$ \rev{by Lemma~\ref{lem:GysCompatibilityWithSections}}. Moreover, $e_! = i'_! \circ \tilde e_!$ by functoriality of Gysin maps along regular embeddings. Thus, the desired equality follows from the fact that $q'_! \circ i'_! = i_! \circ q''_!$ (Lemma~\ref{lem:ProjGysinProps}(4)). 
\end{proof}

\begin{defn}[Gysin maps along projective derived-lci maps] If $f \colon X \to Y$ is a projective derived-lci map, then we define
\[
f_! := p_! \circ i_!,
\]
where $X \stackrel{i}{\hook} \Pb^n_Y \stackrel{p}{\to} Y$ is any factorization of $f$ into a derived regular embedding into a projective space over $Y$.
\end{defn}

Next, we prove that Gysin pushforwards along projective derived-lci maps are compatible with compositions.

\begin{lem}\label{lem:GysinAndProj}
If 
\[
\begin{tikzcd}
    X \arrow[r]{}{f'} \arrow[rd]{}{f} & \Pb^n_Y \arrow[d]{}{p} \\
    & Y
\end{tikzcd}
\]
is a commutative diagram of projective derived-lci maps, then $p_! \circ f'_! = f_!$.
\end{lem}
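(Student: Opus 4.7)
The plan is to reduce the claim, via the Segre embedding, to a direct identity comparing two ways of pushing forward from $\Pb^m \times \Pb^n \times Y$ to $Y$. Since $f'$ is projective derived-lci, first factor $f' = q \circ i$, where $i \colon X \hookrightarrow \Pb^m_{\Pb^n_Y} = \Pb^m \times \Pb^n \times Y$ is a derived regular embedding and $q \colon \Pb^m \times \Pb^n \times Y \to \Pb^n_Y$ is the canonical projection, so that $f'_! = q_! \circ i_!$. Composing $i$ with the Segre embedding $\sigma \times \Id_Y \colon \Pb^m \times \Pb^n \times Y \hookrightarrow \Pb^N \times Y$, where $N = (m+1)(n+1) - 1$, yields a derived regular embedding from $X$ into a single relative projective space over $Y$, whose composition with the projection $\tilde p \colon \Pb^N \times Y \to Y$ equals $f$. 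Applying the previous lemma (independence of factorization) and the functoriality of Gysin maps along derived regular embeddings, one obtains $f_! = \tilde p_! \circ (\sigma \times \Id_Y)_! \circ i_!$. Hence the desired equality $f_! = p_! \circ f'_!$ reduces to verifying
\[
\tilde p_! \circ (\sigma \times \Id_Y)_! = p_! \circ q_!
\]
as homomorphisms $E^{*,*}(\Pb^m \times \Pb^n \times Y) \to E^{*-2(m+n), *-(m+n)}(Y)$.

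The next step is to invoke the projective bundle formula (applied twice) to conclude that every element $\alpha \in E^{*,*}(\Pb^m \times \Pb^n \times Y)$ has a unique expression $\alpha = \sum_{i,j} \pi^*(\alpha_{ij}) \cdot x^i y^j$, with $\pi \colon \Pb^m \times \Pb^n \times Y \to Y$, $x = c_1(\Oc_{\Pb^m}(1))$, $y = c_1(\Oc_{\Pb^n}(1))$, and $\alpha_{ij} \in E^{*,*}(Y)$. Since $\tilde p \circ (\sigma \times \Id_Y) = \pi = p \circ q$, the projection formula for each of the four Gysin maps involved shows that both sides of the displayed identity are $E^{*,*}(Y)$-linear, and so the identity reduces to a verification on the monomials $x^i y^j$. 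For the right-hand side, a straightforward double application of the projection formula and the defining formula for the Gysin pushforward of projections yields $p_! \circ q_!(x^i y^j) = \eta'_{m-i}\, \eta'_{n-j}$.

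It thus remains to show that $\tilde p_! \circ (\sigma \times \Id_Y)_!(x^i y^j) = \eta'_{m-i}\, \eta'_{n-j}$ as well. The plan here is to invoke naturality: the Gysin maps along both projections and regular embeddings are natural with respect to orientation-preserving maps of oriented homotopy commutative ring objects in $\MS^\dbe_S$, so both sides of the displayed identity are induced from the corresponding identity over the universal such object, which is $\MGL_S$ with the universal formal group law as its formal group law. Once reduced to the universal case, one can either appeal to the classical fact that pushforwards along smooth projective morphisms in $\MU$-theory are functorial (hence independent of factorization), or carry out a direct computation using the relation $(\sigma \times \Id_Y)^*(z) = x +_{F_E} y$ together with the projection formula for $(\sigma \times \Id_Y)_!$ and the explicit expansion of $\tilde p_!$ in terms of the $\eta'$-coefficients. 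The main obstacle is precisely this monomial identity: it requires either a careful bookkeeping with the matrices $M_n, M_n^{-1}$ encoding the formal group law, or a careful identification of the Segre-based Gysin pushforward in $\MGL_S$ with the intrinsic transfer structure coming from the six-functor formalism on $\MS^\dbe_S$. Once the monomial identity is established, the lemma follows by the reductions above.
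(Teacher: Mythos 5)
Your reduction to the identity $\tilde p_! \circ (\sigma \times \Id_Y)_! = p_! \circ q_!$ on $E^{*,*}(\Pb^m \times \Pb^n \times Y)$ is logically sound, and the $E^{*,*}(Y)$-linearity argument plus the computation $p_! \circ q_!(x^i y^j) = \eta'_{m-i}\eta'_{n-j}$ are both correct. But the reduction makes the problem \emph{harder}, and the crux of the argument is left open. Note that your target identity, rewritten, is $\tilde p_! \circ (\sigma \times \Id_Y)_! = \pi_!$ where $\pi = p\circ q$ and the left side is, by definition, $\pi_!$ via the Segre factorization; so you have reduced the general lemma to the special case $X = \Pb^m\times\Pb^n\times Y$, $f' = q$, of the very same lemma. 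This is fine if you then prove the special case directly, but the two ways you suggest to do so are problematic: appealing to ``the classical fact that pushforwards along smooth projective morphisms in $\MU$-theory are functorial'' is exactly the statement you are trying to establish (in the universal case) and hence circular; and the direct computation $\tilde p_! \circ (\sigma\times\Id_Y)_!(x^i y^j) = \eta'_{m-i}\eta'_{n-j}$ requires computing the Gysin pushforward along the Segre embedding, which involves the Euler class of its nontrivial normal bundle and an expansion in terms of $z = c_1(\Oc(1))$ on $\Pb^N$. That computation is genuinely nontrivial (compare the explicit Segre formulas in Appendix~\ref{sect:MUCoprod}) and you have not carried it out. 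As written, the proof has a real gap at this step, and you have acknowledged as much.

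The paper's proof avoids the Segre embedding entirely. It chooses a factorization $f = q\circ i$ with $i=(h,f)\colon X\hook \Pb^m\times Y$ a regular embedding, lifts both $i$ and $f'$ through a common regular embedding $(g,h,f)\colon X\hook\Pb^n\times\Pb^m\times Y$, and then uses the double-projection compatibility $q_!\circ(\mathrm{proj}_{\Pb^m\times Y})_! = p_!\circ(\mathrm{proj}_{\Pb^n\times Y})_!$ from Lemma~\ref{lem:ProjGysinProps}(3). That compatibility is immediate from the explicit formula defining $p_!$ and sidesteps any Euler-class computation. If you want to salvage your approach, you should replace the appeal to classical functoriality by explicitly deriving the monomial identity from the double-projection lemma (or from the independence-of-factorization lemma proved just before), rather than leaving it as an acknowledged obstacle.
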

\begin{proof}
Denote by $g$ the composition $X \stackrel{f'}{\to} \Pb^n_Y \to \Pb^n$, and let $h \colon X \to \Pb^m$ be such that $(h,f)\colon X \hook \Pb^m \times Y$ is a derived regular embedding. Then we have a commutative diagram
\[
\begin{tikzcd}
    X \arrow[rrd, bend left]{}{f' = (g,f)} \arrow[rd,hook]{}[description]{(g,h,f)} \arrow[rdd, hook, bend right]{}[swap]{i = (h,f)} &&\\
    &\Pb^n \times \Pb^m \times Y \arrow[r] \arrow[d] & \Pb^n \times Y \arrow[d]{}{p} \\
    &\Pb^m \times Y \arrow[r]{}{q} & Y.
\end{tikzcd}
\]
Gysin pushforwards along the triangles commute by definition, and along the square by Lemma~\ref{lem:ProjGysinProps}(3). Thus, $f_! = q_! \circ i_! = p_! \circ f'_!$, as desired.
\end{proof}

\begin{lem}
If $f \colon Z \to X$ and $g \colon X \to Y$ are projective derived-lci maps, then $(g \circ f)_! = g_! \circ f_!$.
\end{lem}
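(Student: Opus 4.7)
The plan is to reduce the general case to the already-proven Lemma~\ref{lem:GysinAndProj} by first peeling off the projection part of $g$, and then handling the remaining composition of a derived-lci map with a derived regular embedding via base change. No new ideas are needed beyond the lemmas already established in this subsection; the argument is essentially bookkeeping.

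First I would factor $g = q \circ j$, where $j \colon X \hookrightarrow \Pb^m_Y$ is a derived regular embedding and $q \colon \Pb^m_Y \to Y$ is the canonical projection; similarly, factor $f = p \circ i$ with $i \colon Z \hookrightarrow \Pb^n_X$ and $p \colon \Pb^n_X \to X$. The composite $j \circ f \colon Z \to \Pb^m_Y$ is projective derived-lci, so applying Lemma~\ref{lem:GysinAndProj} to the factorization $g \circ f = q \circ (j \circ f)$ gives $(g \circ f)_! = q_! \circ (j \circ f)_!$. Since $g_! = q_! \circ j_!$ by definition, the task reduces to proving $(j \circ f)_! = j_! \circ f_!$ whenever the outer map is a derived regular embedding.

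For this reduced claim, base change $j$ along $p$ to obtain a Cartesian square with top row $p$, bottom row the canonical projection $\tilde p \colon \Pb^n_{\Pb^m_Y} \to \Pb^m_Y$, and vertical derived regular embedding $j' \colon \Pb^n_X \hookrightarrow \Pb^n_{\Pb^m_Y}$. Lemma~\ref{lem:ProjGysinProps}(4) yields $j_! \circ p_! = \tilde p_! \circ j'_!$, which combined with functoriality of Gysin pushforwards along derived regular embeddings gives
\[
j_! \circ f_! \;=\; j_! \circ p_! \circ i_! \;=\; \tilde p_! \circ j'_! \circ i_! \;=\; \tilde p_! \circ (j' \circ i)_!.
\]
On the other hand, $j' \circ i \colon Z \hookrightarrow \Pb^n_{\Pb^m_Y}$ is a derived regular embedding whose composition with $\tilde p$ equals $j \circ f$; since $\tilde p$ is the canonical projection of a relative projective space over $\Pb^m_Y$, this is a valid factorization of the form used to define the Gysin pushforward along a projective derived-lci map, so $(j \circ f)_! = \tilde p_! \circ (j' \circ i)_!$. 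Comparing the two expressions completes the argument. The only mildly subtle point is that when applying Lemma~\ref{lem:ProjGysinProps}(4) one must work relative to $\Pb^m_Y$ rather than $Y$, so that the base change of $p \colon \Pb^n_X \to X$ along $j$ is recognized as the projection of a relative projective space over $\Pb^m_Y$; this is automatic from the identity $\Pb^n_{\Pb^m_Y} = \Pb^n_X \times_X \Pb^m_Y$.
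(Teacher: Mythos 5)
Your proof is correct and follows essentially the same route as the paper's: both factor $f$ and $g$ through relative projective spaces, apply Lemma~\ref{lem:GysinAndProj} to peel off the projection in $g$, and use Lemma~\ref{lem:ProjGysinProps}(4) on the Cartesian square obtained by base-changing the regular embedding $j \colon X \hook \Pb^m_Y$ along $p \colon \Pb^n_X \to X$. The paper simply writes the full two-triangle-and-a-square diagram at once and checks commutativity of all three cells, whereas you isolate the reduced claim $(j \circ f)_! = j_! \circ f_!$ explicitly; this is purely a presentational difference.
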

\begin{proof}
Consider the commutative diagram
\[
\begin{tikzcd}
    Z \arrow[r,hook]{}{i} \arrow[rd]{}[swap]{f} &\Pb^n \times X \arrow[r,hook]{}{j} \arrow[d] &\Pb^n \times \Pb^m \times Y \arrow[d]{}{p} \\
    &X \arrow[r,hook]{}{} \arrow[rd]{}[swap]{g} &\Pb^m \times Y \arrow[d]{}{q} \\
    &&Y.
\end{tikzcd}
\]
By the definition of Gysin pushforwards and by Lemma~\ref{lem:GysinAndProj},
\[
(g \circ f)_! = q_! \circ p_! \circ j_! \circ i_!.
\]
As the Gysin pushforwards along the two triangles commute by definition, and along the square by Lemma~\ref{lem:ProjGysinProps}(4), we have that
\[
q_! \circ p_! \circ j_! \circ i_! = g_! \circ f_!,
\]
proving the claim.
\end{proof}

We record the content of this section as the following theorem.

\begin{thm}\label{thm:GysinProps}
Let $E \in \MS^\dbe_S$ be an oriented and homotopy commutative ring object. Then, if $f \colon Z \to X$ is a projective derived-lci map of virtual relative dimension $c$, there exists \emph{Gysin pushforward} homomorphisms
\[
f_!\colon E^{a,b}(Z) \to E^{a-2c,b-c}(X).
\]
Moreover, these maps satisfy the following properties:
\begin{enumerate}
    \item \emph{Functoriality:} $\Id_! = \Id$ and if $f \colon Z \to X$ and $g \colon X \to Y$ are projective derived-lci, then $(f \circ g)_! = f_! \circ g_!$.
    \item \emph{Base change:} If
    \[
    \begin{tikzcd}
        Z' \arrow[d]{}{p'} \arrow[r]{}{f'} & X' \arrow[d]{}{p} \\
        Z \arrow[r]{}{f} & X
    \end{tikzcd}
    \]
    is a Cartesian square of afp derived $S$-schemes and $f$ is projective and derived-lci, then $p^* \circ f_! = f'_! \circ p'^*$.
    \item \emph{Projection formula:} If $f: Z \to X$ is projective and derived-lci, then the formula $f_!(f^*(\alpha) \cdot \beta) = \alpha \cdot f_!(\beta)$ holds for all $\alpha \in E^{a,b}(X)$ and $\beta \in E^{a',b'}(Z)$.
    \item \emph{Euler classes:} If $i \colon Z \hook X$ is the derived vanishing locus of a global section of a vector bundle $\Ec$ of rank $r$ on $X$, then $i_!(1_Z) = c_1(\Ec) \in E^{2r,r}(X)$.
    \item \emph{Naturality:} If $\theta: E \to F$ is an orientation preserving map of homotopy commutative ring objects in $\MS_S^\dbe$, then the squares
    \[
        \begin{tikzcd}
             E^{a,b}(Z) \arrow[r]{}{f_!} \arrow[d]{}{\theta} & E^{a-2c,b-c}(X) \arrow[d]{}{\theta} \\
             F^{a,b}(Z) \arrow[r]{}{f_!} & F^{a-2c,b-c}(X)
        \end{tikzcd}
    \]
    commute. \qed
\end{enumerate}
\end{thm}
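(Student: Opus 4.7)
The plan is to assemble Theorem~\ref{thm:GysinProps} from the pieces already built in this subsection; almost no new work is required. Given a projective derived-lci map $f \colon Z \to X$ of virtual relative dimension $c$, I would set $f_! := p_! \circ i_!$ for any factorization $Z \stackrel{i}{\hook} \Pb^n_X \stackrel{p}{\to} X$ through a derived regular embedding $i$, using Tang's Gysin map for $i$ and the projective-bundle-formula Gysin map for $p$ constructed earlier. Well-definedness of $f_!$ independent of the factorization is the content of the lemma proved immediately before the theorem statement, which itself reduces via Lemma~\ref{lem:ProjGysinProps}(3),(4) and section functoriality to the case of sections, which in turn rests on the diagonal computation $\Delta_!(1_{\Pb^n}) = \sum_{i,j} a_{1,i+j-n} x_1^i x_2^j$ together with the inverse-matrix identity forcing $p_! \circ i_! = \Id$ whenever $i$ is a section of $p$. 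The bidegree shift in the target comes from summing the virtual codimensions of $i$ and $p$.

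Given well-definedness, the five listed properties reduce cleanly to their two constituent cases. Functoriality for the identity is clear via the factorization through $\Pb^0_X$, and functoriality for a composition $g \circ f$ of projective derived-lci maps is exactly the last lemma proved before the theorem statement. Base change follows by base-changing a chosen factorization: the base change of $i$ is a derived regular embedding and the base change of $p$ is a projection, so $p^* \circ f_! = f'_! \circ p'^*$ follows by composing Tang's base-change formula with Lemma~\ref{lem:ProjGysinProps}(1). The projection formula is obtained analogously by chaining Tang's projection formula with Lemma~\ref{lem:ProjGysinProps}(2): for $\alpha \in E^{a,b}(X)$ and $\beta \in E^{a',b'}(Z)$, one writes
\[
f_!\bigl(f^*\alpha \cdot \beta\bigr) = p_!\bigl(i_!(i^* p^* \alpha \cdot \beta)\bigr) = p_!\bigl(p^*\alpha \cdot i_! \beta\bigr) = \alpha \cdot p_!(i_! \beta) = \alpha \cdot f_!(\beta).
\]

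The Euler class property is immediate from Tang's corresponding statement, since a derived regular embedding $i \colon Z \hook X$ is itself a projective derived-lci map factoring as $i$ followed by $\Id_X$, and the projection Gysin map along $\Id_X = \Pb^0_X \to X$ is the identity. Naturality under an orientation-preserving map $\theta \colon E \to F$ in $\MS_S^\dbe$ follows by composing Tang's naturality statement with Lemma~\ref{lem:ProjGysinProps}(5), using that $\theta$ sends the formal group law of $E$ to that of $F$ and therefore the coefficients $\eta'_i$ defining $p_!$ for $E$ to those defining $p_!$ for $F$. The only genuine obstacle in the whole construction is the well-definedness of $f_!$ under change of factorization, and that has already been dispatched by the chain of three lemmas immediately preceding the theorem; what remains is routine compositional bookkeeping.
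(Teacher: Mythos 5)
Your proposal is correct and takes precisely the same route as the paper: Theorem~\ref{thm:GysinProps} is presented there with an immediate \qed as ``the content of this section,'' i.e.\ it is assembled from the definition $f_! = p_! \circ i_!$, the well-definedness and composition lemmas, Lemma~\ref{lem:ProjGysinProps}, and Tang's theorem, exactly as you lay out. The explicit chain of equalities you give for the projection formula and your observations about the Euler class case ($n=0$ factorization) and naturality ($\theta$ preserving the $\eta'_i$ coefficients) are the expected routine bookkeeping the paper leaves implicit.
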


\subsection{Precobordism ring $\PCob^*$}\label{subsect:PCob}

Universal precobordism rings $\PCob^*(X)$ are an algebro-geometric versions of complex cobordism rings in topology. They were extensively studied by the first-named-author in his thesis  \cite{annala-thesis}.

\begin{defn}
Let $X$ be a derived scheme. Then the \emph{(universal) precobordism ring} $\PCob^*(X)$ is the graded ring defined as follows:
\begin{itemize}
    \item[(Gen)] the group $\PCob^i(X)$ is generated by formal sums of equivalence classes $[V \to X]$ of projective derived-lci maps of relative virtual dimension $-i$; 
\end{itemize}
subject to the following relations:
\begin{itemize}
    \item[(Add)] $[V_1 \sqcup V_2 \to X] = [V_1 \to X] + [V_2 \to X] \in \PCob^i(X)$;
    \item[(Cob)] if $\pi\colon W \to \Pb^1 \times X$ is projective derived-lci map of virtual relative dimension $i$ and 
    \begin{enumerate}
        \item $V_0$ is the derived fibre of $\pi$ over $0 \times X$;
        \item $A + B$ is an expression of the derived fibre $V_\infty$ of $\pi$ over $\infty \times X$ as the sum of two effective virtual Cartier divisors;
    \end{enumerate}
    then
    \[
        [V_0 \to X] = [A \to X] + [B \to X] - [\Pb_{A \cap B}(\Oc(A) \oplus \Oc) \to X]
    \]
    in $\PCob^i(X)$, where $A \cap B$ denotes the derived intersection of $A$ and $B$ on $W$.
\end{itemize}
The multiplicative structure on $\PCob^*(X)$ is given by
\[
[V_1 \to X] \cdot [V_2 \to X] = [V_1 \times_X V_2 \to X],
\]
where $\times_X$ denotes the derived fiber product over $X$.
\end{defn}

\begin{rem}
The relation (Cob) combines two relations within. The first one is the \emph{naive cobordism relation}, i.e., that there is an equality 
\[
[V_0 \to X] = [V_\infty \to X]
\]
whenever $V_0$ and $V_\infty$ are two fibers of a projective derived-lci map $W \to \Pb^1 \times X$. The second one is a \emph{decomposition relation} which enforces a geometric counterpart of the formal group law in a special case. In fact, one can show that the second relation is essentially equivalent to enforcing the formal group law, see \cite[Theorem~3.16]{annala-pre-and-cob}.
\end{rem}

\begin{rem}[Algebraic cobordism]
One obtains the \emph{algebraic cobordism ring} of $X$, $\Omega^*(X)$, as a further quotient of $\PCob^*(X)$ by the so called \emph{snc-relations} (see \cite{annala-pre-and-cob, annala-base-ind-cob}). However, since the definition of $\PCob^*(X)$ is more concrete, and because the distinction between $\PCob^*(X)$ and $\Omega^*(X)$ will not be important for us, we will only work with the precobordism rings in this article.
\end{rem}

The rings $\PCob^*(X)$ form a multiplicative cohomology theory in the sense that there are \emph{pullback homomorphisms} $f^* \colon \PCob^*(X) \to \PCob^*(Y)$ along arbitrary maps $f \colon Y \to X$ given by
\[
f^*[V \to X] = [V \times_X Y \to Y].
\]
Moreover, if $g \colon X \to X'$ is projective and derived-lci of relative virtual dimension $r$, then there exists an additive \emph{Gysin pushforward map} $g_! \colon \PCob^*(X) \to \PCob^{*-r}(X')$ given by
\[
g_! [V \stackrel{h}{\to} X] = [V \stackrel{g \circ h}{\to} X'].
\]
Both pullbacks and Gysin pushforwards are functorial. Moreover, they satisfy the following properties:
\begin{enumerate}
    \item \emph{base change}: if 
    \[
    \begin{tikzcd}
        X' \arrow[r]{}{f'} \arrow[d]{}{g'} & Y' \arrow[d]{}{g} \\
        X \arrow[r]{}{f} & Y
    \end{tikzcd}
    \]
    is a Cartesian square of derived schemes and $g$ is projective and derived-lci, then
    \[
        f^* \circ g_! = g'_! \circ f'^*;
    \]
    \item \emph{projection formula}: if $g \colon X \to X'$ is projective and derived-lci, then
    \[
    g_!(g^*(\alpha) \cdot \beta) = \alpha \cdot g_!(\beta)
    \]
    for all $\alpha \in \PCob^*(X')$ and $\beta \in \PCob^*(X)$.
\end{enumerate}
In addition to these formal properties, the precobordism theory satisfies projective bundle formula, as we recall next.

\begin{defn}
Let $\Ec$ be a vector bundle of rank $r$ on $X$. Then, we define the \emph{Euler class} of $\Ec$ as
\[
c_r(\Ec) := i_!(1) \in \PCob^r(X),
\]
where $i \colon Z \hook X$ is the derived vanishing locus of a global section of $\Ec$. The Euler class is independent of the choice of the section \cite[Lemma 132]{annala-thesis}.
\end{defn}

The following is a special case of \cite[Theorem~166]{annala-thesis}.

\begin{thm}[Projective bundle formula]
Let $X$ be a derived scheme that is noetherian, has finite Krull dimension, and admits an ample line bundle. Let $\Ec$ be a vector bundle of rank $r$ on $X$, and let $\Oc(1)$ be the tautological line bundle on the projective bundle $\pi \colon \Pb(\Ec) \to X$. Then the map
\[
c_1(\Oc(1))^i \cdot \pi^* (-) \colon \bigoplus_{i=0}^{r-1}\PCob^{*-i}(X) \to \PCob^*(\Pb(\Ec))
\]
is an isomorphism.
\end{thm}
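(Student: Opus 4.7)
The plan is to prove injectivity by constructing an explicit retraction, and surjectivity by a geometric induction on the rank $r$ of $\Ec$. The key preliminary computation is that of $\pi_!\bigl(c_1(\Oc(1))^k\bigr) \in \PCob^{k-r+1}(X)$ for $0 \le k \le r - 1$: for $k < r - 1$ this vanishes for dimensional reasons (the target would lie in negative codimension on $X$), while for $k = r-1$ it equals $1_X$. The latter identification uses the ample line bundle hypothesis on $X$ to find, after a sufficiently positive twist, a global section of $\Oc_{\Pb(\Ec)}(1) \otimes \pi^*\Lc^{-1}$ whose derived vanishing locus, cut $r - 1$ times, is a section of $\pi$; its Euler class equals $c_1(\Oc(1))^{r-1}$ up to terms already known to pushforward to $0$.

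For injectivity, define the candidate retraction
\[
\rho \colon \PCob^*(\Pb(\Ec)) \to \bigoplus_{i=0}^{r-1} \PCob^{*-i}(X), \qquad \beta \mapsto \Bigl(\pi_!\bigl(c_1(\Oc(1))^{r-1-i} \cdot \beta\bigr)\Bigr)_{i=0}^{r-1}.
\]
Denote by $u$ the map in the theorem. Computing $\rho \circ u$ with the projection formula, its $(i,j)$-entry with respect to the tautological basis of $\bigoplus_{i=0}^{r-1} \PCob^{*-i}(X)$ is $\pi_!\bigl(c_1(\Oc(1))^{r-1-i+j}\bigr)$; by the preliminary computation this vanishes for $j < i$, equals $1$ for $j = i$, and for $j > i$ is a higher Segre-type class lying in $\PCob^{>0}(X)$. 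Thus $\rho \circ u$ is represented by an upper-triangular matrix of $\PCob^*(X)$-linear endomorphisms with $1$ on the diagonal, hence is invertible, and $u$ is split injective.

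For surjectivity, I would argue by induction on $r$. The case $r = 1$ is trivial since then $\Pb(\Ec) = X$. For the inductive step, a sufficiently positive twist of $\Oc_{\Pb(\Ec)}(1)$ by a pullback line bundle admits a global section whose derived vanishing locus is a derived regular embedding $Z \hookrightarrow \Pb(\Ec)$ realising $Z$ as a projective bundle of rank $r - 2$ over $X$; by construction $[Z \to \Pb(\Ec)]$ lies in the image of $u$. Given a generator $[V \stackrel{f}{\to} \Pb(\Ec)]$, one applies the (Cob) relation to a deformation of $V$ inside $\Pb^1 \times \Pb(\Ec)$ whose special fiber decomposes into the derived blowup $\Bl_{f^{-1}(Z)}V$ together with its exceptional divisor; this rewrites $[V \to \Pb(\Ec)]$ modulo $\im(u)$ as a class factoring through $Z$, to which the inductive hypothesis applies. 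The main obstacle is the rigorous derived-geometric execution of this moving argument; the ample line bundle and noetherian hypotheses on $X$ are precisely what guarantee the requisite derived blowups in the sense of \cite{khan-rydh} and enough projective embeddings to perform the moves.
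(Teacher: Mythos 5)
The paper itself does not prove this theorem; it is recalled as ``a special case of \cite[Theorem~166]{annala-thesis}'' and cited from the first-named-author's thesis, so there is no internal proof to compare your attempt against.

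Your injectivity argument has a genuine error. The claim that $\pi_!\bigl(c_1(\Oc(1))^k\bigr) = 0$ for $0 \le k < r-1$ ``for dimensional reasons'' is false for $\PCob^*$. Unlike the Chow ring, the precobordism ring does not vanish in negative degrees: $\PCob^{-m}(X)$ contains, for example, the class $[\Pb^m_X \to X]$. Concretely, for $X = \Spec(k)$ and $\Ec = \Oc^r$ one has $\pi_!\bigl(c_1(\Oc(1))^k\bigr) = [\Pb^{r-1-k} \to \Spec(k)] \in \PCob^{k-r+1}(k)$, and under the cycle map to $\MU_*$ this hits $[\CP^{r-1-k}]$, which is nonzero for $k < r-1$. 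The vanishing you are invoking is special to the additive formal group law; precobordism has the universal formal group law. Consequently the matrix $\rho \circ u$, which is Toeplitz in the index $j - i$, has nonvanishing entries \emph{below} the diagonal lying in negative degrees, and these are not nilpotent, so your unipotence argument does not apply. A correct retraction argument must instead use the structure of the formal group law to produce the inverse, rather than a vanishing in negative codimension.

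The surjectivity argument is also not a proof: the inductive step (``applies the (Cob) relation to a deformation of $V$ inside $\Pb^1 \times \Pb(\Ec)$ whose special fiber decomposes into the derived blowup $\Bl_{f^{-1}(Z)}V$ together with its exceptional divisor'') is a sketch of intent, and you yourself flag that the ``rigorous derived-geometric execution'' is the main obstacle. As written it does not establish that the constructed deformation is a projective derived-lci map over $\Pb^1 \times \Pb(\Ec)$, that its fiber at $\infty$ decomposes as required by (Cob), or that the resulting residual class genuinely lands in the inductive hypothesis after quotienting by $\im(u)$. This is the heart of the theorem and cannot be left as an outline.
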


\subsection{Cycle maps from $\PCob^*$}\label{subsect:CycleMap}

Above, we recalled that the precobordism rings $\PCob^*(X)$ form a cohomological functor that admit Gysin pushforwards along projective derived-lci maps. Here, we prove that $\PCob^*$ is universal among such functors. Thus we obtain convenient conditions, under which it is possible to define a \emph{cycle map} from $\PCob^*$ to other cohomology theories. For the purposes of the rest of the article, the reader may take Corollary~\ref{cor:CycleMapsFromPCob} as a black box, and ignore the rest of the section.

We begin by introducing the following terminology to distinguish functors like $\PCob^*$ from the bigraded cohomology theories represented by motivic spectra.

\begin{defn}
Let $S$ be a noetherian derived scheme of finite Krull dimension. A \emph{weak oriented cohomology functor} over $S$ is a contravariant functor 
\[
E^* \colon \QProj^\op_S \to \Ring^*
\]
from quasi-projective derived $S$-schemes\footnote{A map of derived schemes $X \to Y$ is \emph{quasi-projective} if it is of almost finite presentation and admits a relatively ample line bundle, see \cite[Section~3.2]{annala-thesis}.} to commutative graded rings, together with Gysin pushforwards
\[
g_! \colon E^*(X) \to E^{*-r}(Y)
\]
along projective derived-lci morphisms of relative dimension $r$, satisfying the following properties:
\begin{enumerate}
    \item \emph{Additivity}: $E^*(\emptyset) \cong 0$ and the map $E^*(X_1 \sqcup X_2) \to E^*(X_1) \times E^*(X_2)$ given by the evident pullbacks is an isomorphism;
    \item Gysin pushforwards are functorial;
    \item Gysin pushforwards and pullbacks satisfy base change in Cartesian squares;
    \item Gysin pushforwards and pullbacks satisfy projection formula.
\end{enumerate}
\end{defn}

\begin{ex}\label{ex:WOCF:PCob}
The universal precobordism rings $\PCob^*$, restricted to $\QProj_S$ for a noetherian $S$ of finite Krull dimension, is an example of a weak oriented cohomology functor. Note that additivity follows from the relation (Add) in the construction of $\PCob^*$.
\end{ex}

\begin{ex}\label{ex:WOCF:MS}
Let $S$ be noetherian and of finite Krull dimension, and let $E \in \MS^\dbe_S$ be an oriented and homotopy commutative ring object. Then we define a weak oriented cohomology functor $E^* \colon \QProj_S \to \Ring^*$ by
\[
E^n(X) := E^{2n,n}(X).
\]
Note that additivity follows from the fact that $E$ has Zariski descent, and the other properties from the basic properties of Gysin maps established in Theorem~\ref{thm:GysinProps}.
\end{ex}

It turns out that the universal precobordism rings $\PCob^*$ give the universal weak oriented cohomology functor that satisfies projective bundle formula. To state this precisely, we need to define the ingredients of the statement.

\begin{defn}\label{def:WOCFEuler}
Let $E^*$ be a weak oriented cohomology functor over $S$, $X \in \QProj_S$, and let $\Ec$ be a vector bundle of rank $r$ on $X$. Then the \emph{Euler class} of $\Ec$ is defined as
\[
c_r(\Ec) := i_{0!}(1_{Z_0}) \in E^{r}(X),
\]
where $i_0 \colon Z_0 \hook X$ is the derived vanishing locus of the zero section of $\Ec$.\footnote{Note that if $\Ec$ has rank $r>0$, then the derived vanishing locus of its zero-section is a non-trivial derived regular embedding of codimension $r$.} 
\end{defn}

\begin{defn}
Let everything be as in Definition~\ref{def:WOCFEuler}. We say that the Euler classes of $E^*$ are \emph{independent} if the equality 
\[
c_r(\Ec) = i_{!}(1_{Z_s})
\]
holds, where $i \colon Z_s \hook X$ is the derived vanishing locus of a section $s$ of $\Ec$. In other words, for such a $E^*$ the cycle class associated to the derived vanishing locus of a vector bundle section depends only on the vector bundle, and not on the section $s$.
\end{defn}

\begin{ex}
Both the examples of form Example~\ref{ex:WOCF:PCob} and Example~\ref{ex:WOCF:MS} have independent Euler classes.
\end{ex}

\begin{defn}
Let $E^*$ be a weak oriented cohomology functor over $S$ that has independent Chern classes. Then $E^*$ \emph{satisfies the weak projective bundle formula} if for all $X \in \QProj_S$, the natural map
\[
E^*(X)[t]/(t^{n+1}) \to E^*(\Pb^n_X),
\]
defined by sending the degree-one element $t$ to $c_1(\Oc(1))$, is an isomorphism.\footnote{It follows from the independence of Euler classes that $c_1(\Oc(1))^{n+1} = 0 \in E^{n+1}(\Pb^n_X)$.}
\end{defn}

The more precise statement we want to make is that $\PCob^*$ is the universal weak oriented cohomology functor that satisfies the weak projective bundle formula. We still need to define the correct notion of transformation of such functors.
    
\begin{defn}
A \emph{natural transformation} $\vartheta \colon E^* \to F^*$ of weak oriented cohomology functors is a natural transformation of contravariant functors $\QProj^\op_S \to \Ring^*$ that is compatible with Gysin pushforwards in the sense that the squares
\[
\begin{tikzcd}
    E^*(X) \arrow[r]{}{\vartheta} \arrow[d]{}{g_!} & F^*(X) \arrow[d]{}{g_!} \\
    E^{*-r}(Y) \arrow[r]{}{\vartheta} & F^{*-r}(Y)
\end{tikzcd}
\]
commute for all $g \colon X \to Y$ projective and derived-lci of pure relative dimension, where $r$ is the relative dimension.
\end{defn}

\begin{ex}
An orientation preserving map $E \to F$ of homotopy commutative ring objects in $\MS_S^\dbe$ induces a natural transformation $E^* \to F^*$ of weak oriented cohomology functors by Theorem~\ref{thm:GysinProps}.
\end{ex}

The following theorem in due to Annala--Iwasa. It was recorded in the first-named-author's thesis \cite[Theorem~192(1)]{annala-thesis}.

\begin{thm}
Suppose $S$ is a noetherian derived scheme of finite Krull dimension that admits an ample line bundle, and let $E^*$ be a weak oriented cohomology functor over $S$ that satisfies the weak projective bundle formula. Then there exists a unique natural transformation $\vartheta \colon \PCob^* \to E^*$, and it is given by
\[
\vartheta[V \stackrel{f}{\to} X] = f_!(1_V) \in E^*(X).
\]
\end{thm}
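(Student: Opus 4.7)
The plan is to prove uniqueness and existence separately, with the bulk of the technical content concentrated in verifying that the candidate formula respects the defining relations of $\PCob^*$.

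\textbf{Uniqueness} is essentially immediate: in $\PCob^*(X)$ one has the tautology $[V \xto{f} X] = f_!([V \xto{\Id} V]) = f_!(1_V)$. Since any natural transformation $\vartheta$ of weak oriented cohomology functors is a unital graded ring map that commutes with Gysin pushforwards, it must send $[V \xto{f} X]$ to $f_!(\vartheta(1_V)) = f_!(1_V)$ in $E^*(X)$, which already determines it on generators.

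For \textbf{existence}, I would define $\vartheta$ on generators by the displayed formula and verify in turn that (i) the relations (Add) and (Cob) defining $\PCob^*$ map to equalities in $E^*$, and (ii) the resulting homomorphism is multiplicative, natural under pullback, and compatible with Gysin pushforward. The relation (Add) is immediate from the additivity axiom for $E^*$. Naturality under pullback is base change; compatibility with Gysin pushforward is functoriality of Gysin; and the ring-map property follows from base change and projection formula applied to the Cartesian square defining $V_1 \times_X V_2$: writing $p_1, p_2$ for the projections onto $V_1$ and $V_2$, base change gives $f_1^* f_{2!}(1) = p_{1!} p_2^*(1) = p_{1!}(1)$, whence
\begin{equation*}
\vartheta[V_1 \to X] \cdot \vartheta[V_2 \to X] = f_{1!}\bigl(f_1^* f_{2!}(1)\bigr) = (f_1 \circ p_1)_!(1) = \vartheta[V_1 \times_X V_2 \to X].
\end{equation*}

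The \textbf{main obstacle} is the cobordism relation (Cob). My plan is first to extract a formal group law $F_E(x,y) \in E^*(S)[[x,y]]$ from weak PBF and independence of Euler classes, by computing $c_1(\Oc(1,0) \otimes \Oc(0,1))$ on $\Pb^n \times \Pb^n_S$ and passing to the limit in $n$ (compare \cite[4.4.2]{annala-iwasa:MotSp}). Then, given $\pi \colon W \to \Pb^1 \times X$ with $V_0$ the fiber over $0 \times X$ and $V_\infty = A + B$ as a sum of virtual Cartier divisors, I would exploit that $\Oc_W(V_0) \cong \Oc_W(V_\infty) \cong \pi^* \Oc_{\Pb^1}(1)$ together with the factorization $\Oc_W(V_\infty) = \Oc_W(A) \otimes \Oc_W(B)$. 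The Euler class identity (made usable by independence) rewrites the three fiber pushforwards along $V_0, A, B \hookrightarrow W$ as the first Chern classes $c_1(\Oc_W(V_0))$, $c_1(\Oc_W(A))$, $c_1(\Oc_W(B))$; the formal group law then expresses $c_1(\Oc_W(A) \otimes \Oc_W(B))$ as $c_1(\Oc_W(A)) + c_1(\Oc_W(B))$ plus a higher-order correction, and weak PBF on $\Pb_{A \cap B}(\Oc(A) \oplus \Oc)$ identifies this correction with $\vartheta[\Pb_{A \cap B}(\Oc(A) \oplus \Oc) \to X]$ after pushforward along $q \circ \pi \colon W \to X$. Alternatively, one can invoke \cite[Theorem~3.16]{annala-pre-and-cob} directly, where the decomposition part of (Cob) is shown to be equivalent to imposing the formal group law on a theory with oriented Chern classes satisfying PBF. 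This formal-group-law bookkeeping is where essentially all the work lies; the remaining axioms of a natural transformation fall out of the standard package of base change, projection formula, and functoriality recorded above.
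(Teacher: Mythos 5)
The paper does not actually prove this statement; it attributes it to Annala--Iwasa and cites \cite[Theorem~192(1)]{annala-thesis}, so there is no in-text proof to compare against. Assessing your proposal on its own merits: the uniqueness argument is correct, and the verifications of (Add), multiplicativity, pullback-naturality and Gysin-compatibility via base change, projection formula and functoriality are all sound (the multiplicativity display is missing the projection-formula step $f_{1!}(1)\cdot f_{2!}(1)=f_{1!}(f_1^*f_{2!}(1))$, but you flag that both are used).

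The genuine gap is in the (Cob) verification, and it is precisely where you write ``weak PBF on $\Pb_{A \cap B}(\Oc(A)\oplus\Oc)$.'' The hypothesis only grants the \emph{weak} projective bundle formula, which by the paper's own definition applies only to \emph{trivial} projective bundles $\Pb^n_X$. The scheme $\Pb_{A\cap B}(\Oc(A)\oplus\Oc)$ is a generally non-trivial $\Pb^1$-bundle over $D := A\cap B$, so the hypothesis gives you no handle on $E^*(\Pb_D(\Oc(A)\oplus\Oc))$, and in particular no way to compute the pushforward $r_!(1) \in E^0(D)$ along its structure map $r$. Concretely, the computation you need is the identity $r_!(1) = \mu(\alpha, [-1]_F(\alpha))$ (where $\alpha = c_1(\Oc(A)|_D)$ and $F_E(x,y) = x + y - xy\,\mu(x,y)$), which is the precise bookkeeping that turns the formal-group-law correction into the class $\vartheta[\Pb_D(\Oc(A)\oplus\Oc)\to X]$. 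The functoriality axiom does give you $r_!\circ s_! = \Id$ for the sections $s_0, s_\infty$ of $r$ for free, and projection formula gives a single linear relation among the $r_!(\xi^k)$, but that is not enough to isolate $r_!(1)$; one needs the full rank-two projective bundle formula (or an equivalent splitting/blow-up argument) for $E^*$, which must first be \emph{derived} from weak PBF rather than invoked. This derivation (essentially bootstrapping from trivial $\Pb^n$ to arbitrary projective bundles in a functor without any descent hypothesis) is the technical heart of the theorem and is absent from your sketch. A closely related smaller issue: extracting a genuine formal group law $F_E \in E^*(S)[[x,y]]$ from weak PBF requires verifying that the expansion of $c_1(\Oc(1,1))$ on $\Pb^n\times\Pb^n_S$ stabilizes and that the resulting power series governs \emph{all} line bundles (via writing an arbitrary $\Ls$ as a difference of very ample bundles); your ``pass to the limit in $n$'' gestures at this but does not address the second point.
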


For the convenience of the reader, we provide a black box reformulation of a special case of the above theorem.

\begin{cor}\label{cor:CycleMapsFromPCob}
Suppose $S$ is a noetherian derived scheme of finite Krull dimension that admits an ample line bundle, and let $E$ be a homotopy commutative ring object in $\MS_S^\dbe$. Then, for every $X \in \QProj_S$, there exists a natural \emph{cycle class map}
\begin{align*}
\cl \colon \PCob^n(X) &\to E^{2n,n}(X) \\
[V \stackrel{f}{\to} X] &\mapsto f_!(1_V)
\end{align*}
that is compatible with the graded ring structure, pullbacks, and Gysin pushforwards. 

Moreover, these cycle class maps are compatible with orientation preserving maps $\theta \colon E \to F$ of homotopy commutative ring objects in $\MS_S^\dbe$ in the sense that, for all $X$, the triangle
\[
\begin{tikzcd}
    \PCob^n(X) \arrow[r]{}{\cl} \arrow[rd]{}{\cl} & E^{2n,n}(X) \arrow[d]{}{\theta} \\
    & F^{2n,n}(X)
\end{tikzcd}
\]
commutes.
\end{cor}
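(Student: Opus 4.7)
The plan is to reduce the corollary directly to the preceding universal property theorem of Annala--Iwasa. To do so, I would show that, for any homotopy commutative ring object $E \in \MS_S^\dbe$, the assignment
\[
E^n(X) := E^{2n,n}(X) \qquad (X \in \QProj_S)
\]
defines a weak oriented cohomology functor over $S$ satisfying the hypotheses of that theorem. The contravariant functoriality and graded commutative ring structure are built into the definition of $\MS_S^\dbe$. Gysin pushforwards $g_!$ along projective derived-lci maps, together with their functoriality, base-change, and projection formula, are provided by Theorem~\ref{thm:GysinProps}. Additivity is immediate from the Zariski (indeed Nisnevich) descent built into $\MS_S^\dbe$; this was already observed in Example~\ref{ex:WOCF:MS}.

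Next I would verify the two remaining hypotheses of the Annala--Iwasa theorem. For \emph{independence of Euler classes}: Definition~\ref{def:WOCFEuler} defines the Euler class as the Gysin pushforward of $1$ along the derived vanishing locus of the zero section, and Theorem~\ref{thm:GysinProps}(4) identifies this Gysin pushforward, for the derived vanishing locus of \emph{any} section, with the same top Chern class $c_r(\Ec)$; hence the cycle class is independent of the section. For the \emph{weak projective bundle formula}: the statement for bidegrees $(2n,n)$ is a consequence of the full motivic projective bundle formula for oriented objects in $\MS_S^\dbe$, which is a defining feature of the orientation on $E$.

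With both hypotheses in place, the Annala--Iwasa theorem produces a unique natural transformation
\[
\cl \colon \PCob^* \to E^{2*,*}, \qquad [V \stackrel{f}{\to} X] \mapsto f_!(1_V),
\]
of weak oriented cohomology functors. Being such a natural transformation, it is automatically compatible with the graded ring structure, with pullbacks, and with Gysin pushforwards along projective derived-lci maps. Finally, given an orientation preserving map $\theta \colon E \to F$ of homotopy commutative ring objects in $\MS_S^\dbe$, Theorem~\ref{thm:GysinProps}(5) says that $\theta$ commutes with Gysin pushforwards, so both $\theta \circ \cl_E$ and $\cl_F$ are natural transformations $\PCob^* \to F^{2*,*}$ of weak oriented cohomology functors with the same formula $[V \stackrel{f}{\to} X] \mapsto f_!(1_V)$; the uniqueness clause of the Annala--Iwasa theorem then forces them to coincide, yielding the desired compatibility triangle.

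The only substantive point in the plan is checking the weak projective bundle formula and the independence of Euler classes; both, however, are essentially tautological given the orientation of $E$ and the properties of Gysin maps already established in Section~\ref{subsect:LongkeGysinProj}, so there is no genuine obstacle beyond bookkeeping.
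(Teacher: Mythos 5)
Your proof is correct and takes essentially the same route as the paper: the paper explicitly frames the corollary as ``a black box reformulation of a special case of the above theorem'' (the Annala--Iwasa universal-property theorem), and the ingredients you supply—Example~\ref{ex:WOCF:MS} for the weak oriented cohomology functor structure, Theorem~\ref{thm:GysinProps}(4) for independence of Euler classes, the projective bundle formula for the oriented object $E$ for the weak PBF, and Theorem~\ref{thm:GysinProps}(5) together with the uniqueness clause for the compatibility triangle—are exactly the ones the paper has in mind.
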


\section{Singular varieties representing simplicial suspensions}

Here we construct singular varieties that represent, cdh and $\Ab^1$-locally, simplicial suspensions of smooth varieties. We begin by recalling Thomason's version of Jouanolou trick \cite[Proposition~4.4]{weibel:1989}.

\begin{thm}
Let $X$ be a qcqs scheme that admits an ample family of line bundles. Then there exists an affine-space bundle
\[
\wtil X \to X
\]
with the source $\wtil X$ an affine scheme.
\end{thm}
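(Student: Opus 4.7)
The plan is to realize $\wtil X$ as a pullback of a product of classical Jouanolou devices along a morphism from $X$ supplied by the ample family.

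First, I would recall the classical construction for $\Pb^n$: let $J_n$ be the affine scheme of rank-one idempotent $(n{+}1) \times (n{+}1)$ matrices, i.e., those $P$ with $P^2 = P$ and $\tr(P) = 1$. The morphism $J_n \to \Pb^n$ sending $P$ to $\im(P)$ is Zariski-locally trivial with fiber $\Ab^n$ (the fiber over a line $L$ parametrizes complementary hyperplanes to $L$, an affine open condition), and in particular it is an affine morphism with affine source. This is exactly the Jouanolou device for $\Pb^n$.

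Next, I would use the ample family on $X$. By the defining property of an ample family together with the quasi-compactness of $X$, one can select finitely many sections $s_j \in \Gamma(X, L_{i_j}^{\otimes n_j})$ of line bundles in the family whose non-vanishing loci $X_{s_j}$ are affine and cover $X$. After replacing each $L_{i_j}^{\otimes n_j}$ by a sufficiently high further tensor power, we may extend each $s_j$ to a finite collection of globally generating sections and thereby obtain morphisms $f_j \colon X \to \Pb^{N_j}$ under which $X_{s_j}$ is the preimage of the standard affine chart cut out by $s_j$. Assembling these yields a morphism $f \colon X \to \prod_j \Pb^{N_j}$, and we set
\[
\wtil X := X \times_{\prod_j \Pb^{N_j}} \prod_j J_{N_j}.
\]
Since a fiber product of Zariski-locally trivial affine-space bundles is again such, the projection $\wtil X \to X$ has the desired structure.

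The remaining and main obstacle is to show that $\wtil X$ is actually affine, rather than merely quasi-affine. Because $\prod_j J_{N_j}$ is affine, it suffices to prove that $f$ is an affine morphism; equivalently, that the preimage under $f$ of every standard affine chart of $\prod_j \Pb^{N_j}$ is an affine open of $X$. Such a preimage is a finite intersection of distinguished opens of the form $X_{t_j}$ for sections $t_j$ of $L_{i_j}^{\otimes n_j}$, which equals $X_{t_1 \otimes \cdots \otimes t_r}$, the non-vanishing locus of a section of the tensor product line bundle $\bigotimes_j L_{i_j}^{\otimes n_j}$. One then exploits the ample family property for this tensor product: by choosing the initial sections $s_j$ carefully (allowing further tensor power refinement within the ample family), each such intersection can be arranged to be affine. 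This final step is the technical heart of the argument and is where the hypothesis of an ample \emph{family} (as opposed to a single ample line bundle) is essential.
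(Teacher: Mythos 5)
The paper does not prove this statement; it merely recalls it as Thomason's refinement of the Jouanolou trick and cites \cite[Proposition~4.4]{weibel:1989} for the proof. So there is no in-paper argument to compare against; what follows is an assessment of your proposal on its own terms.

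Your strategy of pulling back the classical Jouanolou device $J_n \to \Pb^n$ along a morphism $f \colon X \to \prod_j \Pb^{N_j}$ is a natural first guess, but the argument has two genuine gaps, and I do not think it can be completed in this form. First, the ample-family hypothesis does \emph{not} imply that a fixed $L_{i_j}^{\otimes n_j}$ becomes globally generated after raising it to a higher power. The hypothesis only says that for each $x \in X$ there exist \emph{some} index $i$, \emph{some} $n \geq 1$, and a section $s \in \Gamma(X, L_i^{\otimes n})$ with $x \in X_s$ and $X_s$ affine; both $i$ and $n$ are allowed to depend on $x$. For a fixed $L_{i_j}$ there may be no tensor power possessing a section nonvanishing at a given point, in which case $f_j$ is not everywhere defined. (The ample-family condition does yield global generation of suitable \emph{products} $\bigotimes_i L_i^{a_i}$, but that is a different linear system that does not split across the factors $\Pb^{N_j}$ of your product.)

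Second, and more fundamentally, even where a globally generated power exists, one cannot in general choose the generating sections so that each has affine nonvanishing locus, which is exactly what your final step needs. Take $X = \Pb^1 \sqcup \Pb^1$ with the ample family $L_1 = (\mathcal{O}(1), \mathcal{O})$, $L_2 = (\mathcal{O}, \mathcal{O}(1))$. Any globally generating system of sections of $L_1^{\otimes m}$ must contain a section that is nonzero on the second copy of $\Pb^1$, and the nonvanishing locus of that section contains the entire second $\Pb^1$, hence is not affine; consequently $f_1$ is not an affine morphism. This toy case can of course be repaired by passing to a single ample line bundle, but Thomason's theorem is precisely aimed at schemes (such as smooth proper non-projective varieties) that admit an ample family but no ample line bundle, and there that repair is unavailable. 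So the step you flag as the ``technical heart'' is not a finishing touch but an obstruction to the whole plan. For a correct argument, consult \cite[Proposition~4.4]{weibel:1989}, which does not route through an affine morphism to a product of projective spaces.
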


As any separated regular noetherian scheme has an ample family of line bundles \cite[II~2.2.7.1]{berthelot:1971}, the above result applies to all smooth varieties. 

\begin{cons}\label{cons:SSus}
Let $X$ be a smooth $k$-variety. Choose an affine affine-space bundle $\wtil X \to X$, set $X_0 := \wtil X$, and define $A_0$ as the ring of functions on $X_0$. 

We construct inductively a sequence of affine schemes $X_i$ by choosing a closed embedding $\iota_i \colon X_i \hook \Ab^{m_i}$, and defining $X_{i+1}$ to be the pushout
\[
X_{i+1} := \Ab^{m_i} \amalg_{X_{i}} \Ab^{m_i}.
\]
Note that pushouts of schemes along closed immersions exist \cite[Tag~0B7M]{stacks}. Moreover, $X_{i+1}$ is affine and the ring of functions on $X_{i+1}$ is  given by the fiber product
\[
A_{i+1} = k[x_1,\dots, x_{m_i}] \times_{A_i} k[x_1,\dots, x_{m_i}],
\]
where the two maps $k[x_1,\dots, x_{m_i}] \tto A_i$ are the pullbacks along the closed embedding $\iota_i$.
\end{cons}

Before using the above construction, we introduce the following ad hoc notion.

\begin{defn}\label{def:relcoh}
Let $X$ be a finite type $k$-scheme, and let $E \in \SH^\cdh_k$. Then the \textit{relative cohomology} of $E$ is defined as the functor that sends $X$ to
\[
\tilde E(X) := \mathrm{cofib} \big( E(k) \stackrel{\pi^*}{\to} E(X) \big),
\]
where $\pi\colon X \to \Spec(k)$ is the structure morphism. 
\end{defn}

\begin{rem}
If $X$ has a $k$-point $x$, then $x^*$ provides a splitting for $\pi^*$. Consequently, we obtain an isomorphism $\tilde E(X) \simeq E(X,x)$ where 
\[
E(X,x) := \mathrm{fib} \big( E(X) \stackrel{x^*}{ \to} E(x) \big)
\]
is the relative cohomology of $X$ pointed at $x$ in the usual sense.
\end{rem}

\begin{lem}\label{lem:SSus}
Let everything be as in Construction~\ref{cons:SSus}. Then, for all $E \in \SH^\cdh_k$, there are natural isomorphisms
\[
\tilde E^{i+n,j}(X_n) \cong \tilde E^{i,j}(X)
\]
of relative cohomology groups.
\end{lem}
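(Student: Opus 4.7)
The plan is to induct on $n$, showing that each step in Construction~\ref{cons:SSus} shifts the relative cohomology by one in the first bidegree. The base case is handled by $\Ab^1$-invariance: since $X_0 = \wtil X \to X$ is an affine-space bundle, the pullback $\tilde E(X) \to \tilde E(X_0)$ is an equivalence for any $E \in \SH^\cdh_k$. For the inductive step, I would prove the shift formula $\tilde E(X_{i+1}) \simeq \tilde E(X_i)[-1]$.

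The key geometric observation is that the pushout defining $X_{i+1}$ fits into an abstract blowup (cdh) square
\[
\begin{tikzcd}
X_i \sqcup X_i \arrow[r] \arrow[d] & \Ab^{m_i} \sqcup \Ab^{m_i} \arrow[d]{}{p} \\
X_i \arrow[r,hook] & X_{i+1}.
\end{tikzcd}
\]
Indeed, $p$ is finite and surjective, the square is Cartesian (as can be checked on affine rings via the fiber-product description of $A_{i+1}$), and $p$ restricts to an isomorphism on the complement of $X_i$. By cdh descent, applying $E$ produces a pullback square of spectra.

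Next I use $\Ab^1$-invariance to replace $E(\Ab^{m_i})$ by $E(k)$ and then quotient out by the map from the constant diagram $E(k)$, which yields the pullback square
\[
\begin{tikzcd}
\tilde E(X_{i+1}) \arrow[r] \arrow[d] & 0 \arrow[d] \\
\tilde E(X_i) \arrow[r]{}{\Delta} & \tilde E(X_i) \oplus \tilde E(X_i).
\end{tikzcd}
\]
Therefore $\tilde E(X_{i+1}) \simeq \mathrm{fib}(\Delta)$. Since the cofiber of $\Delta$ is the difference map $\tilde E(X_i)\oplus \tilde E(X_i) \to \tilde E(X_i)$, the fiber–cofiber sequence in the stable $\infty$-category identifies $\mathrm{fib}(\Delta) \simeq \tilde E(X_i)[-1]$, completing the inductive step.

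Iterating $n$ times and combining with the base case gives $\tilde E(X_n) \simeq \tilde E(X)[-n]$. Translating this equivalence of spectra into bidegrees (a shift $[-1]$ in the spectrum raises the first motivic index by one), we obtain $\tilde E^{i+n,j}(X_n) \cong \tilde E^{i,j}(X)$. The main obstacle I expect is the verification that the square above really is an abstract blowup square in a form that the cdh-local spectrum $E$ sees—in particular, checking the Cartesianness cleanly from the pushout presentation and ensuring that the pullback-square manipulations after quotienting by $E(k)$ are valid in the stable $\infty$-categorical setting.
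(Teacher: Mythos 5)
Your argument is correct and follows essentially the same route as the paper: use the pushout presentation of $X_{i+1}$ to produce an abstract blowup (cdh) square, apply cdh descent, kill the affine-space terms by $\Ab^1$-invariance, and extract a shift. The only genuine difference is the choice of cdh square. The paper takes the Milnor square
\[
\begin{tikzcd}
  X_i \arrow[r, hook] \arrow[d, hook] & \Ab^{m_i}  \arrow[d, hook] \\
  \Ab^{m_i} \arrow[r,hook]  & X_{i+1}
\end{tikzcd}
\]
directly, observing that it is already an abstract blowup square (the right vertical closed immersion is proper and an isomorphism away from the other copy of $\Ab^{m_i}$). Passing to relative cohomology both top-right and bottom-left corners vanish by $\Ab^1$-invariance, and one reads off $\tilde E(X_{i+1}) \simeq 0 \times_{\tilde E(X_i)} 0 = \Omega\,\tilde E(X_i)$ without needing the auxiliary observation that $\mathrm{fib}(\Delta) \simeq \tilde E(X_i)[-1]$.

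There is a small imprecision in your reduction step that is worth flagging, even though it does not affect the final answer. With the paper's Definition~\ref{def:relcoh}, $\tilde E(Y_1 \sqcup Y_2) = \mathrm{cofib}(E(k) \to E(Y_1) \oplus E(Y_2))$ where $E(k)$ maps diagonally via the structure map; this is \emph{not} $\tilde E(Y_1) \oplus \tilde E(Y_2)$. Consequently, quotienting your cdh square by the literal constant diagram $E(k)$ produces $E(k)$ (not $0$) in the top-right corner and $E(k) \oplus \tilde E(X_i)^{\oplus 2}$ in the bottom-right. Your stated square instead arises from quotienting by the Cartesian auxiliary square that has $E(k)^{\oplus 2}$ at the two disjoint-union vertices and $E(k)$ elsewhere, which amounts to taking the $\tilde E$ of each connected component separately. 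Both choices of auxiliary square are Cartesian, both cofiber squares are therefore Cartesian, and both compute $\tilde E(X_{i+1}) \simeq \tilde E(X_i)[-1]$ — with the literal constant diagram the spurious $E(k)$ summand cancels in the pullback. So your conclusion stands, but the phrasing is misleading; the paper's choice of the Milnor square sidesteps this bookkeeping entirely because both corners to be killed are single affine spaces.
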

\begin{proof}
By $\Ab^1$-invariance, the pullback along $X_0 \to X$ induces isomorphisms 
\[
\tilde E^{i,j}(X) \cong \tilde E^{i,j}(X_0).
\]
Moreover, for all $n \geq 0$, the pushout square
\[
\begin{tikzcd}
  X_n \arrow[r, hook] \arrow[d, hook] & \Ab^{m_n}  \arrow[d, hook] \\
  \Ab^{m_n} \arrow[r,hook]  & X_{n+1}
\end{tikzcd}
\]
is an abstract blowup square. By cdh-descent, this induces a cartesian square of relative cohomology spectra. As the relative cohomologies of affine spaces are trivial by $\Ab^1$-invariance, the connecting maps of the induced square induce isomorphisms $\tilde E^{i+1,j}(X_{n+1}) \cong \tilde E^{i,j}(X_n)$, from which the claim follows.
\end{proof}

\section{A singular counterexample to the motivic Steenrod problem}\label{sect:ObsCl}

The purpose of this section is to find a singular $k$ scheme $X$, together with an $l$-torsion class in the Chow ring $a \in \CH^n(X) = H^{2n,n}_\mot(X)$ that does not lift to the precobordism ring $\PCob^*(X)$, thus proving Theorem~\ref{thm:main}. We do this in two steps. Firstly, in Section~\ref{subsect:MGLObs}, we find a smooth variety $Y$ and a class $b \in H^{i,j}(Y ; \Zb)$ that does not lift to $\MGL^{i,j}(Y)$. Secondly, in Section~\ref{subsect:PCobObs}, we use the basic properties of the motivic cohomology groups to show that $b$ lifts to a class $\tilde b \in H^{2j,j}_\mot(Y_k; \Zb)$, where $Y_k$ is a singular variety essentially representing the $k$th simplicial suspension of $Y$ (see Construction~\ref{cons:SSus}), and $k = 2j-i$. This provides the desired example.

Throughout the section, $k$ is a field of characteristic $p \geq 0$ and $l$ is a prime number that is invertible in $k$. 

\subsection{Class in $H^{i,j}_\mot$ that does not lift to $\MGL^{i,j}$}\label{subsect:MGLObs}

Here we construct an example of a class in the motivic cohomology of a smooth variety that cannot be lifted to an $\MGL$-class. We do so by using well-known obstructions provided by motivic cohomology operations. The action of the cohomology operations is well understood on the classifying stack $\Brm \mu_l$. By leveraging this understanding, we find an unliftable classes in the motivic cohomology of an approximation of  $\Brm \mu_l \times \Brm \mu_l$ by a smooth variety.

We begin by recalling the motivic cohomology operations constructed by Voevodsky \cite{voevodsky:2003a}. The connecting maps of the fiber sequences
\[
\Zb(i) \stackrel{l \cdot}{\to} \Zb(i) \to \Zb(i)/l \qquad \mathrm{and} \qquad \Zb(i)/l \stackrel{l \cdot}{\to} \Zb(i)/l^2 \to \Zb(i)/l
\]
induce the \emph{Bockstein operations} $\tilde\beta \colon H^{i,j}_\mot(-; \Zb/l) \to H^{i+1,j}_\mot(-; \Zb)$ and $\beta \colon H^{i,j}_\mot(- ; \Zb/l) \to H^{i+1,j}_\mot(- ; \Zb/l)$. Note that $\beta$ is the composition of $\tilde\beta$ with reduction modulo $l$. Voevodsky also constructs \textit{motivic Steenrod power operations}
\[
P^r \colon H^{i,j}_\mot(- ; \Zb/l) \to H^{i+2r(l-1), j+r(l-1)}_\mot(-; \Zb/l).
\]

\begin{defn}\label{def:MotMil}
Define cohomology operations
\[
q_i = P^{l^{i-1}} P^{l^{i-2}} \cdots P^l P^1
\]
and 
\[
Q_i = q_i \beta - \beta q_i.
\]
The operation $Q_i$ is referred to as the \emph{$i$th motivic Milnor operation}.
\end{defn}

Motivic Milnor operations provide obstructions against lifting classes from motivic cohomology to algebraic cobordism. The following result is well known, but it seems hard to pinpoint an exact reference, so we provide a concise proof.

\begin{lem}\label{lem:ObsByMilnor}
Let $X$ be a smooth $k$-variety, and let $\alpha \in H^{i,j}_\mot (X; \Zb/l)$ be such that $Q_r(\alpha) \not = 0$. Then $\alpha$ is not in the image of $\MGL^{i,j}(X) \to H^{i,j}_\mot(X; \Zb/l)$.
\end{lem}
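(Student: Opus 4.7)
The plan is to establish the contrapositive by showing that the composition $Q_r \circ \theta_l$ vanishes as a cohomology operation, where $\theta_l \colon \MGL^{*,*} \to H^{*,*}_\mot(-;\Zb/l)$ is the mod-$l$ reduction map. Once this is established, any $\alpha = \theta_l(\tilde\alpha)$ with $\tilde\alpha \in \MGL^{i,j}(X)$ satisfies $Q_r(\alpha) = Q_r(\theta_l(\tilde\alpha)) = 0$, contradicting the hypothesis $Q_r(\alpha) \neq 0$.

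To carry this out, I would invoke the mod-$l$ Hopkins--Morel isomorphism. Since $l$ is invertible in $k$, the characteristic $p$ is automatically invertible in any $\Zb/l$-module, so the natural map
\[
\MGL/(l, a_1, a_2, \ldots) \stackrel{\sim}{\longrightarrow} H\Zb/l
\]
is an equivalence of motivic spectra, where $a_i \in \MGL^{-2i,-i}(k)$ denote Lazard-ring generators. Let $M = \MGL/(l, a_1, \ldots, \widehat{a_{l^r-1}}, \ldots)$ be the intermediate quotient that kills all of these generators \emph{except} $a_{l^r-1}$. Multiplication by $a_{l^r-1}$ then yields a cofiber sequence
\[
\Sigma^{2l^r - 2,\, l^r - 1}\, M \xrightarrow{\,\cdot a_{l^r-1}\,} M \longrightarrow H\Zb/l \stackrel{\partial}{\longrightarrow} \Sigma^{2l^r - 1,\, l^r - 1}\, M,
\]
whose connecting map $\partial$ has precisely the bidegree $(2l^r-1, l^r-1)$ of the $r$th motivic Milnor operation. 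Following Hoyois \cite{hoyois:2013}, I would identify the composition $H\Zb/l \stackrel{\partial}{\to} \Sigma^{2l^r-1,l^r-1} M \twoheadrightarrow \Sigma^{2l^r-1,l^r-1} H\Zb/l$ with $Q_r$ (up to a unit in $\Fb_l^\times$); this is essentially Hoyois's description of the Milnor basis of the motivic Steenrod algebra via the Hopkins--Morel filtration, and it is compatible with the abstract definition $Q_r = q_r\beta - \beta q_r$ of Definition~\ref{def:MotMil}.

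With this identification in hand, the argument is a diagram chase: $\theta_l$ factors as $\MGL \twoheadrightarrow M \twoheadrightarrow H\Zb/l$ through two successive quotient maps, so $\partial \circ \theta_l$ factors through $\partial$ composed with $M \to H\Zb/l$, which are consecutive maps in a cofiber sequence and thus compose to zero. Hence $Q_r \circ \theta_l = 0$, as desired. The main technical obstacle is the identification of $Q_r$ with the connecting map $\partial$ of the above cofiber sequence: this requires tracking the Hopkins--Morel filtration carefully and accounting for the normalizations in Voevodsky's construction of the power operations $P^i$. The remaining inputs are all available away from the characteristic, so no further restrictions on $k$ beyond $l \in k^\times$ are needed.
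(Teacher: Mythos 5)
Your proposal is correct and uses the same essential input as the paper's proof: Hoyois's identification of $Q_r$ with a connecting map in the Hopkins--Morel filtration of $H\Fb_l$ as an iterated quotient of $\MGL$. The packaging is slightly different. The paper cites \cite[Lemma~6.13]{hoyois:2017} in the form of a square that commutes up to a unit in $\Fb_l^\times$,
\[
\begin{tikzcd}
    \MGL / a_{n} \arrow[r]{}{\delta} \arrow[d] & \Sigma^{2n+1,n} \MGL \arrow[d] \\
    H\Fb_l \arrow[r]{}{Q_r} & \Sigma^{2n+1,n} H\Fb_l,
\end{tikzcd}
\]
with $n = l^r - 1$, and then argues that a lift of $\alpha$ to $\MGL$ would force $\delta$ to kill its image in $\MGL/a_n$ and hence (chasing the square) force $Q_r(\alpha) = 0$. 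You instead work with the dual cofiber sequence $M \to H\Fb_l \xrightarrow{\partial} \Sigma M$ obtained by killing $a_n$ in the intermediate quotient $M = \MGL/(l, a_1,\dots,\widehat{a_n},\dots)$, and identify $Q_r$ (up to a unit) with the composite of $\partial$ with the projection $\Sigma M \to \Sigma H\Fb_l$, from which $Q_r \circ \theta_l = 0$ falls out because $\theta_l$ factors through $M$ and two consecutive maps in a cofiber sequence compose to zero. Both arguments deliver the same conclusion, and your factorization-through-$M$ formulation is in fact a clean way of stating what the paper's case analysis accomplishes. The one point to nail down is that the precise identification of $Q_r$ with $(\mathrm{proj}) \circ \partial$ is not literally the square quoted above; it is a closely related statement (which does appear in Hoyois's development via the Bockstein/Milnor correspondence, and also follows from the quoted square together with the map of cofiber sequences induced by $\MGL \to M$). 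As long as you track that deduction or locate the exact form in Hoyois, the argument is complete.
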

\begin{proof}
Let  $a_1, a_2, \dots \in L$ be a sequence of \textit{adequate generators} in the sense of \cite{hoyois:2017}, and let $n = l^r-1$. By \cite[Lemma~6.13]{hoyois:2017}, the square
\[
\begin{tikzcd}
    \MGL / a_{n} \arrow[r]{}{\delta} \arrow[d] & \Sigma^{2n+1,n} \MGL \arrow[d] \\
    H\Fb_l \arrow[r]{}{Q_r} & \Sigma^{2n+1,n} H\Fb_l
\end{tikzcd}
\]
commutes up to a multiplication by an element of $\Fb_l^\times$, where $H\Fb_l$ is the motivic spectrum representing motivic cohomology with mod-$l$ coefficients, and $\delta$ is the connecting map induced by taking quotient by $a_n$. Thus, if $Q_r(\alpha) \not = 0$, then either $\alpha$ does not lift to a class in $\MGL/a_n$, in which case it does not lift to $\MGL$ either, or it lifts to a class $\tilde \alpha$ in $\MGL/a_n$ such that $\delta(\tilde \alpha) \not = 0$. In both cases, $\alpha$ does not lift to $\MGL$.
\end{proof}

Using the above result, we find unliftable integral cohomology classes.

\begin{lem}\label{lem:ModlObsToIntObs}
Let $X$ be a smooth $k$-variety, and let $\alpha \in H^{i,j}_\mot(X; \Zb/l)$ be such that $Q_r \beta (\alpha) \not = 0$. Then $\tilde\beta(\alpha) \in H^{i+1,j}_\mot(X; \Zb)$ is not in the image of $\MGL^{i+1, j}(X) \to H^{i+1,j}_\mot(X; \Zb)$.
\end{lem}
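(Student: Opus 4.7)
The plan is to reduce directly to the previous lemma by exploiting that $\beta$ factors as $\tilde\beta$ followed by reduction modulo $l$. This reduction is immediate from the defining fiber sequences recalled before Definition~\ref{def:MotMil}: the square
\[
\begin{tikzcd}
\Zb(j) \arrow[r]{}{l} \arrow[d] & \Zb(j) \arrow[r] \arrow[d] & \Zb(j)/l \arrow[d,equal] \\
\Zb(j)/l \arrow[r]{}{l} & \Zb(j)/l^{2} \arrow[r] & \Zb(j)/l
\end{tikzcd}
\]
of short exact sequences (or, equivalently, of cofiber sequences) induces a commuting square relating $\tilde\beta$ and $\beta$, and tells us that $\beta = \rho \circ \tilde\beta$, where $\rho\colon H^{*,*}_\mot(X;\Zb)\to H^{*,*}_\mot(X;\Zb/l)$ is reduction mod $l$.

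Now I argue by contradiction. Suppose $\tilde\beta(\alpha) \in H^{i+1,j}_\mot(X;\Zb)$ lies in the image of the natural map $\MGL^{i+1,j}(X) \to H^{i+1,j}_\mot(X;\Zb)$ coming from the orientation $\MGL \to H\Zb$. Pick a lift $\gamma \in \MGL^{i+1,j}(X)$. Postcomposing the orientation with the map $H\Zb \to H\Fb_l$ realizes the mod-$l$ reduction $\rho$ on cohomology groups, so the image of $\gamma$ in $H^{i+1,j}_\mot(X;\Zb/l)$ along $\MGL \to H\Fb_l$ equals $\rho\bigl(\tilde\beta(\alpha)\bigr) = \beta(\alpha)$. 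Hence $\beta(\alpha)$ lies in the image of $\MGL^{i+1,j}(X) \to H^{i+1,j}_\mot(X;\Zb/l)$.

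On the other hand, the hypothesis $Q_r\beta(\alpha) = Q_r(\beta(\alpha)) \neq 0$ lets us apply Lemma~\ref{lem:ObsByMilnor} to the class $\beta(\alpha) \in H^{i+1,j}_\mot(X;\Zb/l)$, which asserts that $\beta(\alpha)$ is \emph{not} in the image of $\MGL^{i+1,j}(X) \to H^{i+1,j}_\mot(X;\Zb/l)$. This is a contradiction, so $\tilde\beta(\alpha)$ does not lift to $\MGL^{i+1,j}(X)$. There is no real obstacle here: once one unwinds the definition of $\beta$ from the two short exact coefficient sequences, the lemma is an immediate corollary of Lemma~\ref{lem:ObsByMilnor} via naturality of mod-$l$ reduction against the orientation map.
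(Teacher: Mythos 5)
Your proof is correct and takes essentially the same route as the paper's: both use the factorization $\beta = (\text{mod-}l\text{ reduction})\circ\tilde\beta$, the compatibility of the orientation maps $\MGL\to H\Zb\to H\Fb_l$, and Lemma~\ref{lem:ObsByMilnor} applied to $\beta(\alpha)$. The paper states it directly (``as $\tilde\beta(\alpha)$ maps to $\beta(\alpha)$\dots neither is $\tilde\beta(\alpha)$'') while you phrase it as a contradiction, but the content is identical.
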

\begin{proof}
As $\tilde\beta(\alpha)$ maps to $\beta(\alpha)$ in $H^{i+1,j}_\mot(X ; \Zb/l)$, and as the latter is not in the image of $\MGL^{i+1,j}(X)$ by Lemma~\ref{lem:ObsByMilnor}, neither is $\tilde\beta(\alpha)$.
\end{proof}

In order to construct explicit unliftable classes, we need a geometric object, the motivic cohomology of which, together with the action of the mod-$l$ motivic Steenrod algebra, is well understood. Here, we use the classifying stack $\Brm \mu_l$ for this purpose. We recall its basic cohomological properties that were established in \cite[Section~6]{voevodsky:2003a}.

\begin{thm}[Voevodsky]\label{thm:BmulProps}
Let $\Xfr$ be a presheaf of spaces on $\Sm_k$. Then there exists a natural isomorphism
\[
H^{*,*}_\mot (\Xfr \times  \Brm \mu_l , \Zb/l) = H^{*,*}_\mot (\Xfr, \Zb/l)[[u,v]] / (u^2 = \tau v + \rho u)
\]
where:
\begin{enumerate}
    \item $v \in H^{2,1}_\mot(\Brm \mu_l ; \Zb/l)$ is the first Chern class of the tautological line bundle $\Ls_{\mu_l}$ of $\Brm \mu_l$;
    \item $u \in H^{1,1}_\mot(k)$ is the unique element that pulls back to $0$ along $\Spec(k) \to \Brm \mu_l$, and satisfies
    \[
        \tilde\beta(u) = c_1(\Ls_{\mu_l}) \in H^{2,1}_\mot(\Brm \mu_l; \Zb);
    \]
    \item $\rho$ is the image of $[-1] \in H^{1,1}_\mot(k, \Zb)$;
    \item $\tau$ is defined as
    \[
    \tau = 
    \begin{cases}
        0 & \text{if $l \not = 2$ or $\mathrm{char}(k) = 2$;} \\
        [-1] \in \mu_2(k) = H^{0,1}_\mot(k; \Zb/2)  & \text{if $l = 2$ and  $\mathrm{char}(k) \not = 2$.} 
    \end{cases}
    \]
\end{enumerate}
Furthermore, the action of the motivic Steenrod algebra is determined by the action of the Steenrod algebra on $H^{*,*}_\mot (\Xfr, \Zb/l)$ and the formulas
\begin{align*}
    \beta(u) &= v; \\
    \beta(v) &= 0; \\
    P^i(v^n) &= {n \choose i} v^{il + n - i};  \\
    P^i(u) &= 0.
\end{align*}
\end{thm}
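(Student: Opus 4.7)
The plan is to follow Voevodsky's approach: model $\Brm\mu_l$ by a sequential colimit of smooth quasi-projective varieties and compute the motivic cohomology using the Gysin sequence together with the projective bundle formula.

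First, I would approximate $\Brm\mu_l$ by the smooth varieties $U_n := (\Ab^{n+1} \setminus 0)/\mu_l$, where $\mu_l$ acts by scalar multiplication. Each $U_n$ is a principal $\Gb_m$-torsor over $\Pb^n$ classified by the line bundle $\Oc(-l)$; concretely, $U_n \cong L_n \setminus 0$ where $L_n$ is the total space of $\Oc(-l)$. The colimit of these varieties models $\Brm\mu_l$ in the motivic homotopy category. I would then compute $H^{*,*}_\mot(\Xfr \times U_n; \Zb/l)$ from the Gysin sequence for the zero section $\Pb^n \hook L_n$: using the homotopy equivalence $L_n \simeq \Pb^n$, the relevant connecting map is multiplication by the Euler class $c_1(\Oc(-l)) = -lv$, which vanishes modulo $l$. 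Combined with the projective bundle formula for $\Pb^n$, this yields split short exact sequences presenting $H^{*,*}_\mot(\Xfr \times U_n; \Zb/l)$ as a free $H^{*,*}_\mot(\Xfr; \Zb/l)$-module with basis $\{v^i\}_{0 \leq i \leq n} \cup \{u v^j\}_{0 \leq j \leq n-1}$. Passing to the colimit yields the claimed formal power-series algebra; along the way, $v$ is identified as $c_1(\Ls_{\mu_l})$, and $u$ is singled out as the unique class in $H^{1,1}_\mot$ that vanishes on $\Spec(k)$ and satisfies $\tilde\beta(u) = c_1(\Ls_{\mu_l})$.

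Next, I would extract the quadratic relation $u^2 = \tau v + \rho u$ from the cup-product structure on this algebra. For odd $l$ it reduces to $u^2 = 0$, which is forced by graded commutativity in the topological degree; for $l = 2$ it is equivalent, via the Bloch--Kato/norm-residue isomorphism, to the Steinberg-type identity $\{a\}\{a\} = \{a\}\{-1\}$ in Milnor $K$-theory, obtained by pulling back along the classifying maps $\Spec(k(a)) \to \Brm\mu_2$ arising from Kummer theory.

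Finally, the action of the Steenrod algebra is pinned down as follows: $\beta(u) = v$ is the mod-$l$ reduction of the defining property $\tilde\beta(u) = c_1(\Ls_{\mu_l})$, and $\beta(v) = 0$ because $v$ admits an integral lift. The formula $P^i(v^n) = \binom{n}{i} v^{il + n - i}$ follows from the Cartan formula together with the instability identities $P^0(v) = v$, $P^1(v) = v^l$ (top instability for $v \in H^{2,1}$), and $P^r(v) = 0$ for $r \geq 2$ (weight instability $P^r = 0$ when $r$ exceeds the weight). The vanishing $P^i(u) = 0$ for $i \geq 2$ is again weight instability ($u$ has weight $1$); the subtle case $i = 1$ is handled by combining the Adem-type relation $Q_1 = P^1\beta - \beta P^1$ with the explicit basis of $H^{2l-1, l}_\mot(\Brm\mu_l; \Zb/l)$ produced in the second step, together with the compatibility $P^1\beta(u) = P^1(v) = v^l$, which pins down and forces the candidate coefficient of $P^1(u)$ to vanish. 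The main obstacles will be the $u^2$ relation for $l = 2$ (requiring Bloch--Kato) and the $i = 1$ case of $P^i(u) = 0$ (requiring careful manipulation within the motivic Steenrod algebra).
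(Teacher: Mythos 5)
The paper does not prove this theorem; it is recalled from Voevodsky's ``Reduced power operations in motivic cohomology'' (\S6 of the cited reference) and used as a black box. There is therefore no proof in the paper to compare against; what you have written is a reconstruction of Voevodsky's original argument, and at the level of strategy it is the right one: the $\Ab^1$-model $\Brm\mu_l \simeq \colim_n U_n$ with $U_n$ the complement of the zero section in $\Oc(-l)$ over $\Pb^n$, the Gysin sequence for the zero section, the observation that the Euler class $c_1(\Oc(-l)) = -lv$ dies modulo $l$ and hence the sequence splits, the projective bundle formula, and passage to the colimit.

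A few points need tightening. First, the module basis of $H^{*,*}_\mot(\Xfr\times U_n;\Zb/l)$ is $\{v^i\}_{0\le i\le n}\cup\{uv^j\}_{0\le j\le n}$ rather than $\{uv^j\}_{0\le j\le n-1}$: the quotient term $H^{*-1,*-1}(\Pb^n)$ in the Gysin sequence contributes a copy of $\Zb/l[v]/(v^{n+1})$, not $\Zb/l[v]/(v^{n})$; this matches the paper's own cofiber sequence $\Sigma^{-2(n+1),-(n+1)}H\Fb_l^{\Brm\mu_l}\to H\Fb_l^{\Brm\mu_l}\to H\Fb_l^{U_n}$ in Lemma~\ref{lem:BmulApprox}, which exhibits $H^{*,*}(U_n)$ as $H^{*,*}(\Brm\mu_l)/(v^{n+1})$. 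Second, the appeal to Bloch--Kato for the $l=2$ quadratic relation is more than is needed; the relevant identifications of $H^{1,1}$ and $H^{2,2}$ of a field with $K^M_1/2$ and $K^M_2/2$ are Kummer theory and Merkurjev--Suslin, both predating the norm residue theorem. Third, and more substantively, the argument you give for $P^1(u)=0$ is incomplete. Writing $P^1(u)=a\,v^{l-1}+b\,uv^{l-1}$ with $a\in H^{1,1}_\mot(k;\Zb/l)$ and $b\in\Zb/l$, your manipulation via $Q_1=P^1\beta-\beta P^1$ together with $P^1\beta(u)=v^l$ only constrains the coefficient $b$ of $uv^{l-1}$ (and even that requires an independent evaluation of $Q_1(u)$); it says nothing about $a$, which lives in $k^\times/(k^\times)^l$ and can be nonzero. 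Voevodsky's own treatment instead invokes the full instability relations for the motivic reduced power operations, which directly kill $P^1$ on $H^{1,1}$ without any computation on $\Brm\mu_l$; replacing your Adem-relation detour with a citation of those instability properties would close the gap.
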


Applying the above result to the case where $\Xfr = \Brm \mu_l$ we find unliftable classes in the motivic cohomology of a stack.

\begin{lem}\label{lem:ObsClassOnStack}
We have that
\[
Q_1 \beta (u_1 \cdot u_2) = v_1^l \cdot v_2 - v_1 \cdot v_2^l \in H^{2(l+1), l+1}_\mot(\Brm \mu_l \times \Brm \mu_l ; \Zb/l)
\]
is non-zero.
\end{lem}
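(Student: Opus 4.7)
\textbf{Proof plan for Lemma~\ref{lem:ObsClassOnStack}.}

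The plan is to compute $Q_1\beta(u_1 u_2)$ directly via the formulas in Theorem~\ref{thm:BmulProps}, using only the Leibniz rule for $\beta$, the Cartan formula for $P^1$, and the identity $\beta^2 = 0$. Since $q_1 = P^{l^0} = P^1$, Definition~\ref{def:MotMil} unpacks to $Q_1 = P^1\beta - \beta P^1$. The Bockstein $\beta$ comes from the short exact sequence $0 \to \Zb/l \to \Zb/l^2 \to \Zb/l \to 0$, and therefore satisfies $\beta^2 = 0$, so
\[
Q_1\beta = P^1\beta^2 - \beta P^1\beta = -\beta P^1\beta.
\]
Thus it suffices to evaluate $\beta P^1\beta(u_1 u_2)$ and check that it is nonzero.

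I would carry out the computation in three steps. First, since $\beta$ is a graded derivation for the topological grading, and $u$ has topological degree $1$ with $\beta(u) = v$ and $\beta(v) = 0$, one gets
\[
\beta(u_1 u_2) \;=\; v_1 u_2 - u_1 v_2.
\]
Second, applying the Cartan formula $P^1(ab) = P^1(a)\,b + a\,P^1(b)$ together with $P^1(u) = 0$ and $P^1(v) = v^l$ (the case $n = i = 1$ of $P^i(v^n) = \binom{n}{i}v^{il+n-i}$), I get
\[
P^1\beta(u_1 u_2) \;=\; v_1^l u_2 - u_1 v_2^l.
\]
Third, applying $\beta$ once more, using $\beta(u) = v$ and the identity $\beta(v^l) = l\,v^{l-1}\beta(v) = 0$, yields
\[
\beta P^1\beta(u_1 u_2) \;=\; v_1^l v_2 - v_1 v_2^l,
\]
and so $Q_1\beta(u_1 u_2) = \pm(v_1^l v_2 - v_1 v_2^l)$, the sign depending on the chosen Koszul convention for $\beta$ (irrelevant for the conclusion).

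Finally, to check non-vanishing, I would invoke Theorem~\ref{thm:BmulProps} with $\Xfr = \Brm\mu_l$, which identifies $H^{*,*}_\mot(\Brm\mu_l \times \Brm\mu_l; \Zb/l)$ as a free $H^{*,*}_\mot(k;\Zb/l)$-module on the monomials $u_1^{\epsilon_1} v_1^{a_1} u_2^{\epsilon_2} v_2^{a_2}$ with $\epsilon_i \in \{0,1\}$ and $a_i \geq 0$. Because $l \geq 2$, the two monomials $v_1^l v_2$ and $v_1 v_2^l$ are distinct basis elements in bidegree $(2(l+1),\,l+1)$, so their difference is nonzero.

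There is no serious obstacle: this is a direct operator calculation. The only items that require a bit of care are the Koszul sign in the Leibniz rule applied to the odd-degree classes $u_i$, and the consistency check that $P^1(v^l) = \binom{l}{1} v^{2l-1} = 0 \in \Zb/l$, which in particular guarantees that no hidden terms arise from a second application of $P^1$.
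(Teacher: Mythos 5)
Your proof follows the paper's argument essentially verbatim: rewrite $Q_1\beta = -\beta P^1\beta$ using $\beta^2=0$, compute $\beta(u_1u_2)$ by the Leibniz rule, apply the Cartan formula for $P^1$, apply $\beta$ once more, and read off non-vanishing from the free module structure in Theorem~\ref{thm:BmulProps}. The only caveat worth flagging is that the Cartan formula you wrote, $P^1(ab)=P^1(a)\,b+a\,P^1(b)$, is the complete formula only for odd $l$; for $l=2$ Voevodsky's Proposition~9.7 has an extra term $\tau\,\beta(a)\beta(b)$, which happens to vanish in both applications here (since $\beta(v_i)=0$), so the computation and conclusion are unaffected, but a careful write-up should note this. (Incidentally, the parenthetical remark at the end about ``a second application of $P^1$'' is spurious: the operator is $\beta P^1\beta$, so $P^1$ is applied only once and the identity $P^1(v^l)=0$ is never used.)
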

\begin{proof}
As $\beta^2 = 0$, we have that $Q_1 \beta = - \beta P^1 \beta$. As Bockstein is a graded derivation \cite[Section~8]{voevodsky:2003a}, we have 
\[
\beta (u_1 \cdot u_2) = v_1 \cdot u_2 - u_1 \cdot v_2.
\] 
Then, using the Cartan formula \cite[Proposition~9.7]{voevodsky:2003a} and Theorem~\ref{thm:BmulProps}, we compute that 
\[
\beta P^1 (v_1 \cdot u_2 - u_1 \cdot v_2) = v_1^l \cdot v_2 - v_1 \cdot v_2^l.
\]
\rev{By Theorem~\ref{thm:BmulProps}, the classes $v_1^l \cdot v_2$ and $v_1 \cdot v_2^l$ are linearly independent over $\Zb/l$. Thus, $v_1^l \cdot v_2 - v_1 \cdot v_2^l$ is non-zero, proving the claim.}
\end{proof}

In order to obtain unliftable motivic cohomology classes on a smooth variety, we approximate $\Brm \mu_l$ by varieties. As $\Brm \mu_l$ is $\Ab^1$-homotopy equivalent to the complement $U_\infty$ of the zero-section of $\Vb_{\Pb^\infty}(\Oc(-l))$ \cite[Lemma~6.3]{voevodsky:2003a}, the complements $U_n$ of the zero-section of $\Vb_{\Pb^n}(\Oc(-l))$ provide natural candidates. Indeed the motivic cohomology of $U_n$ is closely related to that of $\Brm \mu_l$, as we verify next.

\begin{lem}\label{lem:BmulApprox}
Let $\Xfr$ be a presheaf of spaces on $\Sm_k$. Then the pullback along the canonical map $U_n \to \Brm \mu_l$ induces isomorphisms
\[
H^{i,j}_\mot(\Xfr \times \Brm \mu_l ; \Zb/l) \to  H^{i,j}_\mot(\Xfr \times U_n ; \Zb/l) 
\]
for all $i$ and all $j \leq n$.
\end{lem}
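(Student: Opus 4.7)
The central idea is to exploit the localization/Gysin sequence for the zero section $s\colon \Pb^n \hook \Vb_{\Pb^n}(\Oc(-l))$, whose open complement is $U_n$ and whose normal bundle is $\Oc(-l)$. Combining this with $\Ab^1$-invariance of motivic cohomology on smooth schemes (so that $H^{*,*}_\mot(\Vb_{\Pb^n}(\Oc(-l)) \times \Xfr; \Zb/l) \cong H^{*,*}_\mot(\Pb^n \times \Xfr; \Zb/l)$), the Gysin sequence takes the form
\[
\cdots \to H^{i-2,j-1}_\mot(\Pb^n \times \Xfr; \Zb/l) \xto{\cup e} H^{i,j}_\mot(\Pb^n \times \Xfr; \Zb/l) \to H^{i,j}_\mot(U_n \times \Xfr; \Zb/l) \to H^{i-1,j-1}_\mot(\Pb^n \times \Xfr; \Zb/l) \to \cdots
\]
with connecting map given by cup-product with the Euler class $e = c_1(\Oc(-l)) = -l\cdot c_1(\Oc(1))$.

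The key observation is that $e \equiv 0 \pmod{l}$, so the cup-product with $e$ is the zero map on mod-$l$ motivic cohomology. Consequently, the long exact sequence collapses into natural short exact sequences
\[
0 \to H^{i,j}_\mot(\Pb^n \times \Xfr; \Zb/l) \to H^{i,j}_\mot(U_n \times \Xfr; \Zb/l) \to H^{i-1,j-1}_\mot(\Pb^n \times \Xfr; \Zb/l) \to 0.
\]
The same analysis applied to the ind-variety $U_\infty \subset \Vb_{\Pb^\infty}(\Oc(-l))$ produces an analogous short exact sequence, and by \cite[Lemma~6.3]{voevodsky:2003a} the canonical map $U_\infty \to \Brm \mu_l$ is an $\Ab^1$-equivalence, so the resulting sequence computes $H^{*,*}_\mot(\Brm\mu_l \times \Xfr; \Zb/l)$ as well.

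To conclude, the map $U_n \to \Brm \mu_l$ induces a ladder comparing the two short exact sequences. By the projective bundle formula and the vanishing of motivic cohomology of (presheaves of) smooth $k$-schemes in negative weight, the restriction $H^{a,b}_\mot(\Pb^\infty \times \Xfr; \Zb/l) \to H^{a,b}_\mot(\Pb^n \times \Xfr; \Zb/l)$ is an isomorphism whenever $b \le n$, because the kernel $v^{n+1} H^{*,*}_\mot(\Pb^\infty \times \Xfr; \Zb/l)$ lives in weights $\ge n+1$. Applying this to the pair $(a,b) = (i,j)$ and to $(a,b) = (i-1, j-1)$ (both satisfying $b \le n$ when $j \le n$), the five lemma delivers the desired isomorphism.

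The one technical point that needs care is the passage from the ind-variety $U_\infty$ to $\Brm\mu_l$, and the identification of the cohomology of $U_\infty$ with the (inverse) limit of the cohomologies of the $U_n$: here I would argue that for any fixed bidegree $(i,j)$, the transition maps $H^{i,j}_\mot(U_{n+1}\times\Xfr;\Zb/l) \to H^{i,j}_\mot(U_n\times\Xfr;\Zb/l)$ become isomorphisms as soon as $n \ge j$ (by the same Gysin-and-five-lemma argument just outlined with $U_{n+1}$ in place of $\Brm\mu_l$), so the tower satisfies Mittag-Leffler and no $\lim^1$ obstruction appears. This is the step most prone to hidden subtleties, but once it is dispatched the rest of the argument is a direct comparison of short exact sequences.
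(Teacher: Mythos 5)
Your proof is correct and arrives at the same result, but it is organized differently from the paper's. The paper works entirely at the level of (exponential) motivic spectra: it starts from the cofiber sequence $\Sigma^{-2(n+1),-(n+1)} H\Fb_l^{\Pb^\infty}\xto{x^{n+1}}H\Fb_l^{\Pb^\infty}\to H\Fb_l^{\Pb^n}$ (a packaging of the projective bundle formula), cites Voevodsky's Section~6 to transfer this to a cofiber sequence $\Sigma^{-2(n+1),-(n+1)}H\Fb_l^{\Brm\mu_l}\xto{v^{n+1}}H\Fb_l^{\Brm\mu_l}\to H\Fb_l^{U_n}$, and then concludes immediately from the vanishing of motivic cohomology in negative weights applied to the bidegree shift by $(n+1)$ in the weight. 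You instead run the geometric Gysin/localization sequence for the zero section $\Pb^n\hook\Vb_{\Pb^n}(\Oc(-l))$ directly (for each $n$ and for $\infty$), observe that the connecting homomorphism is cup product with $c_1(\Oc(-l))=-l\,c_1(\Oc(1))\equiv 0\pmod l$, split the long exact sequences into short exact sequences, and finish by a five-lemma comparison using that $H^{a,b}(\Pb^\infty\times\Xfr)\to H^{a,b}(\Pb^n\times\Xfr)$ is an isomorphism for $b\le n$. This re-derives what the paper pulls from Voevodsky's Section~6, which makes your argument a bit more self-contained at the cost of being longer; the trade-off is reasonable. Two small remarks: the Mittag-Leffler digression at the end is not actually needed for the five-lemma comparison between the short exact sequences for $U_\infty$ and $U_n$ (it would only matter if you wanted to present $H^{i,j}(\Brm\mu_l\times\Xfr)$ as $\lim_n H^{i,j}(U_n\times\Xfr)$, which the comparison does not require once the ladder of short exact sequences is set up); and when invoking the Gysin sequence for the ind-variety $U_\infty\subset\Vb_{\Pb^\infty}(\Oc(-l))$ you should say a word about why it holds — e.g.\ because the localization cofiber sequences $\Sigma^\infty(U_n)_+\to\Sigma^\infty(\Vb_n)_+\to\Th(\Oc(-l)|_{\Pb^n})$ are compatible with the linear transition maps, so the sequence passes to the colimit in $\SH_k$.
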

\begin{proof}
Indeed, denoting $x = c_1(\Oc(1))$, the cofiber sequence 
\[
\Sigma^{-2(n+1), -(n+1)} H\Fb_l^{\Pb^\infty} \xto{x^{n+1} \cdot} H\Fb_l^{\Pb^\infty} \to H\Fb_l^{\Pb^n}   
\]
induces by \cite[Section~6]{voevodsky:2003a}, a cofiber sequence
\[
\Sigma^{-2(n+1), -(n+1)} H\Fb_l^{\Brm \mu_l} \xto{v^{n+1} \cdot} H\Fb_l^{\Brm \mu_l} \to  H\Fb_l^{U_n}
\]
of spectra, where the exponential motivic spectrum $H\Fb_l^X \in \SH_k$ represents the motivic cohomology functor
\[
Y \mapsto H^{*,*}_\mot(Y \times X;\Zb/l).
\]
Thus, the claim follows from the fact that $H^{i,j}_\mot(\Yfr; \Zb/l) = 0$ if $j<0$ for any presheaf of spaces $\Yfr$ on $\Sm_k$.
\end{proof}

We are finally ready to find an explicit unliftable class on a smooth variety.

\begin{prop}\label{prop:ObsClassOnVariety}
Let $U_l$ be the compliment of the zero section in $\Vb_{\Pb^l}(\Oc(-l))$. Then the pullback of $u_1 \cdot u_2$ to $H^{2,2}_\mot(U_l \times U_l; \Zb/l)$ is a class on which the operation $Q_1 \beta$ does not vanish. 

In particular,
\[
\tilde\beta(u_1 \cdot u_2) \in H^{3,2}_\mot(U_l \times U_l; \Zb)
\]
is an $l$-torsion class that is not in the image of $\MGL^{3,2}(U_l \times U_l)$.
\end{prop}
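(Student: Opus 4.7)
My plan is to transfer Lemma~\ref{lem:ObsClassOnStack}'s non-vanishing computation from the stack $\Brm\mu_l \times \Brm\mu_l$ to the smooth variety $U_l \times U_l$ by naturality, and then to conclude via Lemma~\ref{lem:ModlObsToIntObs}. Since $l \geq 2$, the weight $j = 2$ satisfies $j \leq l$, so applying Lemma~\ref{lem:BmulApprox} once in each factor (first with $\Xfr = \Brm\mu_l$ to pass to $U_l \times \Brm\mu_l$, then with $\Xfr = U_l$ to pass to $U_l \times U_l$) yields an isomorphism $H^{2,2}_\mot(\Brm\mu_l \times \Brm\mu_l; \Zb/l) \isomto H^{2,2}_\mot(U_l \times U_l; \Zb/l)$. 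Thus $u_1 \cdot u_2$ is a well-defined class on $U_l \times U_l$ whose image under $Q_1\beta$ is, by naturality of motivic cohomology operations, the pullback of the stack-level class $v_1^l v_2 - v_1 v_2^l$.

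The main step is to verify that this pullback remains non-zero in $H^{2(l+1),l+1}_\mot(U_l \times U_l; \Zb/l)$; the weight $l+1$ lies one step outside the isomorphism range of Lemma~\ref{lem:BmulApprox}, so injectivity is not automatic. To handle it, I would iterate the cofiber sequence from the proof of Lemma~\ref{lem:BmulApprox} in both factors to identify the kernel of the pullback with the ideal generated by $v_1^{l+1}$ and $v_2^{l+1}$. In bidegree $(2(l+1), l+1)$, weight-counting against the presentation from Theorem~\ref{thm:BmulProps} forces the exponents of $u_1$ and $u_2$ to vanish in any monomial $u_1^a u_2^b v_1^c v_2^d$ contributing to this bidegree (one has $i - 2j = -(a+b) = 0$), so every class there is a pure polynomial in $v_1, v_2$ and the kernel reduces to the two-dimensional span of $v_1^{l+1}$ and $v_2^{l+1}$. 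Since $v_1^l v_2 - v_1 v_2^l$ is a combination of two distinct monomials, neither of which is $v_1^{l+1}$ or $v_2^{l+1}$, it is visibly non-zero modulo this kernel. I expect the careful bookkeeping of the kernel of the pullback outside the direct range of Lemma~\ref{lem:BmulApprox} to be the main technical obstacle.

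The ``in particular'' assertion then follows from Lemma~\ref{lem:ModlObsToIntObs}, while the $l$-torsion property of $\tilde\beta(u_1 \cdot u_2)$ is automatic: $\tilde\beta$ is the connecting map of the short exact sequence $\Zb(2) \xto{l} \Zb(2) \to \Zb(2)/l$, so its image is contained in the $l$-torsion subgroup of $H^{3,2}_\mot(U_l \times U_l; \Zb)$.
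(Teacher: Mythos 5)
Your proof is correct and takes the same route as the paper: transfer the nonvanishing of $Q_1\beta(u_1\cdot u_2)$ from $\Brm\mu_l\times\Brm\mu_l$ to $U_l\times U_l$ by naturality, and conclude via Lemma~\ref{lem:ModlObsToIntObs}. You rightly flag, and correctly fill in, a detail the paper compresses into ``immediate'': the class $v_1^l v_2 - v_1 v_2^l$ lives in weight $l+1$, just outside the stated isomorphism range $j\le l$ of Lemma~\ref{lem:BmulApprox}, so one must invoke the cofiber sequence from that lemma's proof and check the class avoids the resulting kernel. One small refinement to your weight count: the computation $i-2j=-(a+b)$ tacitly assumes the coefficient of each monomial $u_1^a u_2^b v_1^c v_2^d$ lies in $H^{0,0}(k;\Zb/l)$; this is forced because $H^{p,q}_\mot(k;\Zb/l)=0$ whenever $p>q$ or $q<0$, so in bidegree $(2(l+1),l+1)$ only constant coefficients occur, the kernel is precisely $\Zb/l\{v_1^{l+1}, v_2^{l+1}\}$, and $v_1^l v_2 - v_1 v_2^l$ is visibly not in it.
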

\begin{proof}
Since cohomology operations commute with pullbacks, it suffices by Lemma~\ref{lem:ObsClassOnStack} to check that $v_1^l \cdot v_2 - v_1 \cdot v_2^l$ is not annihilated by the pullback to $U_l \times U_l$. This is immediate by the explicit characterization of the pullback provided by  Lemma~\ref{lem:BmulApprox}. The second claim follows from Lemma~\ref{lem:ModlObsToIntObs}.
\end{proof}

\subsection{Class in $\CH^n$ that does not lift to $\PCob^n$}\label{subsect:PCobObs}

Here, we use the unliftable class found in the preceding section in order to construct a class in $\CH^n$ that does not lift to $\PCob^n$. Recall that by \rev{Corollary~\ref{cor:CycleMapsFromPCob}} we have a natural commutative square
\[
\begin{tikzcd}
    \PCob^n(X) \arrow[r]{}{\cl} \arrow[d]{}{\cl} & H^{2n,n}_\mot(X;\Zb) \arrow[d] \\
    \MGL^{2n,n}_\cdh(X) \arrow[r]{}{\vartheta} & H^{2n,n}_\cdh(X;\Zb),
\end{tikzcd}
\]
where the right vertical map is induced by cdh-sheafification. The right vertical map induces a surjection on $l$-torsion subgroups (Theorem~\ref{thm:MotToCDHlsurj}). Thus, it suffices to find an $l$-torsion class \rev{in $H^{2n,n}_\cdh(X;\Zb)$} that does not lift along $\vartheta$.

Let $U_l \times U_l$ be as in Proposition~\ref{prop:ObsClassOnVariety}. \rev{By Proposition~\ref{prop:ObsClassOnVariety}, it has a $\cdh$-local motivic cohomology class $\tilde \beta(u_1 \cdot u_2)$ that does not lift along $\vartheta$, but this is not on the Chow diagonal. Next, we will ``move'' it to the Chow diagonal.} As \rev{$U_l \times U_l$} has a $k$-rational point $u \in U_l \times U_l$, the relative cohomology groups $\tilde E^{*,*}(U_l \times U_l)$ (Definition~\ref{def:relcoh}) can be identified with $E^{*,*}(U_l \times U_l, u) \subset E^{*,*}(U_l \times U_l)$ for any $E \in \SH_k^\cdh$. Moreover, if we let $X_{l,1}$ be the singular $k$-scheme constructed from $U_l$ using the procedure of Construction~\ref{cons:SSus}, there are natural isomorphisms (Lemma~\ref{lem:SSus})
\begin{align}\label{eq:SusIso}
\tilde E^{i,j}(U_l \times U_l) \cong \tilde E^{i+1,j}(X_{l,1}). 
\end{align}
Thus, the class $\tilde\beta(u_1 \cdot u_2) \in H^{3,2}_\mot(U_l \times U_l; \Zb)$ \rev{from Proposition~\ref{prop:ObsClassOnVariety}} maps to a class
\rev{\begin{equation}\label{eq:CdhNonLiftableClass}
x \in H^{4,2}_\cdh(X_{l,1}; \Zb).
\end{equation}} 
As the isomorphisms of Eq.~\eqref{eq:SusIso} are natural in $E$, the class $x$\rev{, exactly like the class $\tilde\beta(u_1 \cdot u_2)$,} is not in the image of $\vartheta$. As $x$ is $l$-torsion, it lifts to a class
\rev{\begin{equation}\label{eq:NonLiftableClass}
\tilde x \in H^{4,2}_\mot(X_{l,1}; \Zb).
\end{equation}} 
Thus, we have proven the main result of the article.

\begin{thm}
\rev{Let $X_{l,1}$ be the singular $k$-scheme obtained by applying Construction~\ref{cons:SSus} to the smooth variety $U_l \times U_l$, and let $\tilde x \in \CH^2(X_{l,1})$ be the class of Eq.~\eqref{eq:NonLiftableClass}}. Then the class $\tilde x \in \CH^2(X_{l,1})$ is an $l$-torsion class that is not in the image of $\cl \colon \PCob^2(X_{l,1}) \to \CH^2(X_{l,1})$. In particular, the motivic cohomology groups $H^{2*,*}_\mot(X_{l,1};\Zb[e^{-1}])$ \rev{and $H^{2*,*}_\mot(X_{l,1};\Zb_{(l)})$} do not have the Steenrod property, where $e$ is the characteristic exponent of $k$.
\end{thm}

\section{Discussion}

\subsection{Existence of a cycle theoretic model for the Chow ring} 

The Chow ring of a smooth variety can be defined via an explicit, cycle theoretic model: it is exactly the ring of formal sums of closed subvarieties up to rational equivalence. Above, we have showed that the Chow ring of a singular scheme $X$ need not be generated by pushforwards $f_!(1_V)$ of fundamental classes along projective derived-lci maps $f \colon V \to X$. Thus, we have ruled out one cycle theoretic model for the Chow ring:  the Chow ring is not isomorphic to the ring one obtains from the algebraic cobordism ring of Annala \cite{annala-thesis} by enforcing the additive formal group law. 

The question that remains is the existence of any cycle theoretic model for the Chow ring of singular varieties. Recently, Park~\cite{park:2021} has constructed complexes of cycles in the pursuit of a cycle theoretic model for the motivic cohomology of singular schemes. He calls the homology groups of these complexes \emph{yeni higher Chow groups}. Unfortunately, the yeni Chow groups do not have the correct relationship with algebraic $K$-theory \cite[Section~1.6.3]{elmanto-morrow}, so it is unclear if one should expect a cycle theoretic model for the Chow ring coming from the yeni Chow groups.

The question of the existence of cycle theoretic models is intimately tied to the existence of \emph{transfers} (pushforwards) for the Chow rings: if the Chow ring admits a pushforward map along $f\colon V \to X$, then one can associate to $f$ the cycle class $f_! (1_V) \in \CH^*(X)$. Moreover, the existence of transfers in $\CH^*$ is closely related to the existence of pullbacks for the Chow groups $\CH_*$: according to the bivariant philosophy \cite{fulton-macpherson, yokura:2009}, the class of morphisms along which $\CH^*$ admits pushforwards is exactly the class of proper morphisms along which $\CH_*$ admits pullbacks. Thus we would like to raise the following dual pair of questions.

\begin{quest}[Existence of pushforwards and pullbacks]
\

\begin{enumerate}
    \item What is the largest class of morphisms of $k$-schemes along which the Chow rings $\CH^*$ admit natural transfer maps?
    \item What is the largest class of morphisms of finite type $k$-schemes along which the Chow groups $\CH_*$ admit natural pullback maps?
\end{enumerate}
\end{quest}
The question (2) seems to be an old one (see e.g. \cite[Example~17.4.6, Example~18.3.17]{fulton:1998}), but unfortunately it has not attracted much attention thus far.

\subsection{Motivic Steenrod problem at the characteristic} 

In this work, we have not investigated whether or not the motivic cohomology groups $H^{2*,*}_\mot(X; \Zb_{(p)})$ have the Steenrod property, where $p>0$ is the characteristic of the base field $k$. If $X$ is smooth, then the motivic cohomology groups are isomorphic to Chow groups. Consequently, if resolution of singularities with blowups (or alterations whose degree is coprime to $p$) holds, $H^{2*,*}_\mot(X; \Zb_{(p)})$ has the Steenrod property, as every algebraic cycle is a $\Zb_{(p)}$-linear combination of proper pushforwards of smooth varieties mapping to $X$. For this reason, the motivic cohomology of smooth $k$-varieties is expected to satisfy the Steenrod property at the characteristic, even though it might be hard to prove. On the other hand, a negative answer for the Steenrod problem at the characteristic would give a disproof of resolution of singularities (see \cite{shin2023priori}).

\rev{In the original version of this article, we expressed an expectation that there should exist a \textit{singular} finite type $k$-scheme $X$ such that $H^{2*,*}_\mot(X;\Zb_{(p)})$ does not have the Steenrod property, and suggested that this could be proven using Primozic's motivic Steenrod operations at the characteristic \cite{primozic:2020}. Such an example has been recently constructed in the article \cite{annala-elmanto} whose main purpose is to give a new construction of motivic Steenrod operations at the characteristic, addressing the shortcomings of Primozic's construction. The strategy of Section~\ref{sect:ObsCl} works out for the most part, but some new complications arise. For instance, lifting a $p$-torsion class along the cdh sheafification map $H^{*,*}_\mot(X;\Zb) \to H^{*,*}_\cdh(X;\Zb)$ is a non-trivial question, as the cofiber of the cdh-sheafification is $p$-complete \cite{elmanto-morrow}, and can therefore have an abundance of $p$-torsion.}

%
%

\appendix

\section{Hopf algebra structure of $\MU_*(\CP^\infty)$}\label{sect:MUCoprod}

Here, we recall useful facts of the Hopf algebra structure of $\MU_*(\CP^\infty)$. In particular, we give explicit formulas for the product and coproduct in the basis of $\MU_*(\CP^\infty)$ given by finite dimensional complex projective spaces embedded linearly into $\CP^\infty$. Explicit formula for the coproduct gives an explicit formula for the $\MU_*$-classes of diagonal embeddings $\CP^n \to \CP^n \times \CP^n$ used in the main text. Explicit formula for the product gives explicit formulas for the $\MU_*$-classes of Segre embeddings $\CP^n \times \CP^m \to \CP^{nm + n + m}$, which can be used to answer a question of Lee--Pandharipande \cite{pandharipande:2012} (see Corollary~\ref{cor:LeePQuest}).

We recall the purely formal aspects of the Hopf algebra structure of $\MU_*(\CP^\infty)$ from \cite[Section~3]{ravenel-wilson}. Given a space $X$, $\alpha \in \MU^*(X)$, and $\beta \in \MU^*(X)$, we denote by $\langle \alpha, \beta \rangle$ the pushforward of the cap product $\alpha \frown \beta$ to $\MU_* := \MU_*(pt)$.

\begin{prop}
Complex (co)bordism has the following properties:
\begin{enumerate}
    \item $\MU^*(\CP^\infty) \cong \MU^* [[x]]$, where $x$ is the first Chern class of the tautological line bundle $\Oc(1)$ on $\CP^\infty$.
    \item $\MU_*(\CP^\infty)$ is a free $\MU_*$-module on classes $\beta_{j} \in \MU_{2j}(\CP^\infty)$, where $\beta_j$ are uniquely determined by 
    \[
    \langle x^i, \beta_j \rangle = \delta_{ij}.
    \]
    \item We have that $\MU_*(\CP^\infty \times \CP^\infty) \cong \MU_*(\CP^\infty) \otimes_{\MU_*} \MU_*(\CP^\infty)$, and it is a free $\MU_*$-module on classes $\beta_i \otimes \beta_j$.
    \item We have 
    \[
        \Delta_*(\beta_n) = \sum_{i+j = n} \beta_i \otimes \beta_j \in \MU_*(\CP^\infty \times \CP^\infty).
    \]
\end{enumerate}
\end{prop}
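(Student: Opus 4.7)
The plan is to prove the four claims in order, with (1) and (2) being classical complex-orientation computations, (3) a Künneth argument, and (4) a short duality computation using the projection formula for the diagonal.

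For (1), I would first establish, for each finite $n$, the identification $\MU^*(\CP^n) \cong \MU^*[x]/(x^{n+1})$ with $x = c_1(\Oc(1))$. This can be done by induction on $n$ using the cofiber sequence $\CP^{n-1} \hook \CP^n \to S^{2n}$, or equivalently by the projective bundle formula for the complex-oriented theory $\MU$. One then passes to the limit along the restriction maps $\MU^*(\CP^n) \to \MU^*(\CP^{n-1})$, which are the obvious projections. Since the system $\{\MU^*(\CP^n)\}$ satisfies the Mittag--Leffler condition, the $\lim^1$ term in the Milnor exact sequence vanishes and one obtains $\MU^*(\CP^\infty) \cong \lim_n \MU^*[x]/(x^{n+1}) \cong \MU^*[[x]]$.

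For (2), I would use the fact that for each $n$ the module $\MU^*(\CP^n)$ is free of finite rank over $\MU^*$. The evaluation pairing together with the universal coefficient theorem (or Poincaré duality for $\CP^n$) then shows that $\MU_*(\CP^n)$ is free with a dual basis $\beta_0,\ldots,\beta_n$ uniquely determined by $\langle x^i,\beta_j\rangle = \delta_{ij}$ for $0 \le i,j \le n$. These classes are compatible under the inclusion $\CP^n \hook \CP^{n+1}$ because $x$ restricts to $x$, so pushforward sends $\beta_j$ to $\beta_j$. Passing to the colimit $\MU_*(\CP^\infty) = \colim_n \MU_*(\CP^n)$, which is exact and commutes with free module structures, yields the desired free basis $\{\beta_j\}_{j \ge 0}$.

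For (3), I would invoke the Künneth theorem for complex bordism. Since $\MU_*(\CP^n)$ is a finitely generated free $\MU_*$-module, the natural map $\MU_*(\CP^n) \otimes_{\MU_*} \MU_*(\CP^m) \to \MU_*(\CP^n \times \CP^m)$ is an isomorphism sending $\beta_i \otimes \beta_j$ to a basis. Passing to colimits over $n,m$ gives the statement for $\CP^\infty \times \CP^\infty$; the colimit commutes with the tensor product over $\MU_*$, and the Milnor $\lim^1$ issue does not occur on the homology side. Finally, for (4), I would simply compute the duality pairing of $\Delta_*(\beta_n)$ against the external basis $\{x^a \otimes x^b\}$ of cohomology: by the projection formula and the fact that $\Delta^*(x^a \otimes x^b) = x^a \cdot x^b = x^{a+b}$, one has
\[
\langle x^a \otimes x^b,\, \Delta_*(\beta_n)\rangle \;=\; \langle \Delta^*(x^a \otimes x^b),\, \beta_n\rangle \;=\; \langle x^{a+b},\, \beta_n\rangle \;=\; \delta_{a+b,\,n}.
\]
Writing $\Delta_*(\beta_n) = \sum_{i,j} c_{ij}\,\beta_i \otimes \beta_j$ with $c_{ij} \in \MU_*$ and using the orthonormality $\langle x^a\otimes x^b,\beta_i \otimes \beta_j\rangle = \delta_{ai}\delta_{bj}$, we read off $c_{ij} = \delta_{i+j,\,n}$, which is exactly the claimed formula. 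The main technical step is (1), where one has to be careful with the passage to the inverse limit; once that is in place, the remaining three items follow formally from duality, Künneth, and the projection formula.
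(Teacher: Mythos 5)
Your argument is correct, and it is the standard one: the paper itself gives no proof of this proposition, citing \cite[Section~3]{ravenel-wilson} for these purely formal facts, and your chain of reasoning (projective bundle formula plus vanishing $\lim^1$ for (1), Kronecker duality over the free modules $\MU^*(\CP^n)$ and passage to the colimit for (2), Künneth for free $\MU_*$-modules for (3), and the adjunction $\langle \Delta^*(x^a\otimes x^b),\beta_n\rangle=\langle x^a\otimes x^b,\Delta_*\beta_n\rangle$ for (4)) is exactly what underlies that reference. No gaps; the only point worth making explicit is that the dual-basis step in (2) uses that $\MU^*(\CP^n)$ is a finitely generated free $\MU^*$-module so the universal coefficient/Kronecker map is an isomorphism, which you already noted.
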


As one can see from above, the coproduct has a very explicit formula in terms of the \emph{dual basis} of $\MU_*(\CP^\infty)$ given by the elements $\beta_j$. However, for our purposes, we would like to understand the Hopf algebra structure of $\MU_*(\CP^\infty)$ in terms of a more geometric basis, namely that which is given by the linear embeddings $\iota_n: \CP^n \hook \CP^\infty$.

\begin{defn}
Denote by $p_n \in \MU_{2n} (\CP^\infty)$ the complex bordism class corresponding to $\iota_n$.
\end{defn}

Clearly $x \frown p_n = p_{n-1}$. The following result explains the relationship between the $\beta_i$ and the $p_i$.

\begin{lem}\label{lem:MUBaseChange}
The equality
\begin{equation}\label{eq:MUBaseChange1}
\beta_n = \sum_{i=0}^n a_{1i} \cdot p_{n-i}    
\end{equation}
holds in $\MU_{2n}(\CP^\infty)$, where $a_{ij} \in \MU_{2(i+j-1)}$ are coefficients of the formal group law of $\MU$.
\end{lem}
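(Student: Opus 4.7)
\emph{Proof proposal.} The plan is to determine $\beta_n$ by testing it against the dual basis $x^j$. First I would compute the pairing $\langle x^j, p_n\rangle$ using the projection formula: since $\iota_n^*x = c_1(\mathcal{O}_{\CP^n}(1))$ and capping $j$ times with this class on $[\CP^n]$ yields a linearly embedded $[\CP^{n-j}] \in \MU_{2(n-j)}(\CP^n)$, pushing forward to a point gives the bordism class
\[
\langle x^j, p_n\rangle \;=\; [\CP^{n-j}] \in \MU_{2(n-j)},
\]
with the convention $[\CP^{-k}] = 0$ for $k>0$ and $[\CP^0]=1$.

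Next I would observe that, since $\{p_k\}_{0 \le k \le n}$ is an $\MU_*$-basis of $\MU_{\le 2n}(\CP^\infty)$, the equation in the statement is equivalent, under the pairing with $x^j$, to the family of identities
\[
\sum_{i=0}^{m} a_{1i}\,[\CP^{m-i}] \;=\; \delta_{m,0}, \qquad m\ge 0
\]
(obtained by setting $m = n-j$). Thus the lemma reduces to a single formal identity between the coefficients $a_{1i}$ of the formal group law $F_{\MU}$ and the standard generators $[\CP^m]$.

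To establish this identity I would use Mishchenko's formula for the logarithm $\ell(x) = \sum_{m\ge 0} \frac{[\CP^m]}{m+1} x^{m+1}$ of $F_{\MU}$. Differentiating $F(u,v) = \ell^{-1}\bigl(\ell(u)+\ell(v)\bigr)$ in $u$ gives
\[
F_u(u,v) \;=\; \frac{\ell'(u)}{\ell'\bigl(F(u,v)\bigr)},
\]
and specializing at $u=0$ (using $\ell'(0)=1$ and $F(0,v)=v$) yields $F_u(0,v) = 1/\ell'(v)$. On the other hand, reading off coefficients directly from $F(u,v) = u+v+\sum_{i,j\ge 1}a_{ij}u^iv^j$ we have $F_u(0,v) = \sum_{i\ge 0} a_{1i}v^i$ (with $a_{10}=1$). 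Comparing with $\ell'(v) = \sum_{m\ge 0}[\CP^m]v^m$, the relation $F_u(0,v)\cdot\ell'(v)=1$ unpacks into exactly the desired convolution identity, concluding the proof.

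The most delicate step is the identification of $F_u(0,v)$ with $1/\ell'(v)$; the rest is bookkeeping. One should be a little careful that the computation of $\langle x^j, p_n\rangle$ takes place in $\MU_*(\CP^\infty)$ with its module structure and that the pairing is well defined despite $\CP^\infty$ being infinite-dimensional (this is handled by the fact that $\MU^*(\CP^\infty)=\MU^*[[x]]$ and that $p_n$ lives in a finite-dimensional skeleton).
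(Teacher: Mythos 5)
Your proof is correct, but it goes by a genuinely different route than the paper's. Both arguments ultimately come down to the convolution identity $\sum_{i=0}^{m} a_{1i}\,[\CP^{m-i}] = \delta_{m,0}$: you extract it by pairing the claimed formula against the dual basis $x^j$, while the paper observes that $\langle 1,\beta_n'\rangle=\delta_{n0}$ for $\beta_n'=\sum_i a_{1i}p_{n-i}$ and then uses $x^i\frown\beta_n'=\beta_{n-i}'$ to get the full family of pairings — essentially the same bookkeeping. Where you diverge is the key input used to prove the convolution identity. The paper establishes it geometrically by pushing forward $c_1(\Oc(1,1))\frown 1_{\CP^1\times\CP^n}$ and invoking the Milnor hypersurface relation $[H_{1,n}]=[\CP^1\times\CP^{n-1}]$. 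You instead differentiate $F(u,v)=\ell^{-1}(\ell(u)+\ell(v))$, identify $F_u(0,v)=1/\ell'(v)$, and invoke Mishchenko's formula $\ell'(v)=\sum_m [\CP^m]v^m$ to unpack the product $F_u(0,v)\cdot\ell'(v)=1$. Both inputs are standard, and in fact closely related (the Milnor hypersurface computation is one way to prove Mishchenko's theorem), so the two proofs are comparable in depth; your version is more formal-group-law-theoretic, the paper's more directly geometric. One small point worth making explicit: the logarithm $\ell$ lives over $\MU_*\otimes\Qb$, so your identity $F_u(0,v)\,\ell'(v)=1$ is a priori rational. Since $\MU_*$ is torsion-free and all coefficients $a_{1i}$ and $[\CP^m]$ are integral, this descends to $\MU_*$, but the sentence making that observation belongs in the proof.
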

\begin{proof}
For a closed (almost) complex manifold $X$ of complex dimension $n$, denote by $[X] \in \MU_{2n}$ its class in complex bordism. Pushing forward $c_{1}(\Oc(1,1)) \frown 1_{\CP^1 \times \CP^n}$ and applying the formal group law of $\MU$, we observe that
\[
[H_{1,n}] = [\CP^1] \cdot [\CP^{n-1}] + \sum_{i=0}^n a_{1i} [\CP^{n-i}],
\]
where $H_{1,n}$ is a Milnor hypersurface. As $[H_{1,n}] = [\CP^1 \times \CP^{n-1}] \in \MU_{2n}$ (see e.g. \cite{solomadin:2018} for an explicit proof), we have proven that, for $n>0$,
\[
\beta'_n := \sum_{i=0}^n a_{1i} \cdot p_{n-i}
\]
pushes forward to $0 \in \MU_{2n}$. As $x^i \frown \beta'_n = \beta'_{n-i}$, we have proven that
\[
\langle x^i, \beta'_j \rangle = \delta_{ij}.
\]
As this was the defining property of $\beta_j$, it follows that $\beta'_j = \beta_j$.
\end{proof}

The inverse change of basis also has an explicit formula. 

\begin{lem}
The equality
\begin{equation}\label{eq:MUBaseChange2}
p_n = \sum_{i=0}^n [\CP^i] \cdot \beta_{n-i}
\end{equation}
holds in $\MU_{2n}(\CP^\infty)$, where $[\CP^i] \in \MU_{2i}$ is the bordism class of the complex projective space.
\end{lem}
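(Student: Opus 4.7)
The plan is to invert the change-of-basis formula from Lemma~\ref{lem:MUBaseChange} without manipulating the matrix of formal-group-law coefficients directly. Instead, I would exploit the duality: since $\MU_*(\CP^\infty)$ is free on $\{\beta_j\}$ with $\langle x^i, \beta_j \rangle = \delta_{ij}$, the $\MU_*$-linear functionals $\langle x^i, - \rangle$ collectively determine elements of $\MU_{2n}(\CP^\infty)$, so it suffices to compute both sides against all powers of $x$.

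First, I would compute $\langle x^i, p_j \rangle$ geometrically. Writing $p_j = (\iota_j)_*(1_{\CP^j})$ and applying the projection formula,
\[
x^i \frown p_j \;=\; (\iota_j)_*\bigl(\iota_j^*(x^i) \frown 1_{\CP^j}\bigr) \;=\; (\iota_j)_*(y^i),
\]
where $y = c_1(\Oc(1)) \in \MU^2(\CP^j)$. Pushing forward further to a point, $y^i$ is Poincaré dual to a codimension-$i$ linearly embedded $\CP^{j-i} \hookrightarrow \CP^j$ when $0 \leq i \leq j$, and $y^i = 0$ for $i > j$. Hence $\langle x^i, p_j \rangle = [\CP^{j-i}]$ in the allowed range and $0$ otherwise.

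Next, since $p_n$ lies in the $\MU_*$-submodule generated by $\beta_0, \dots, \beta_n$, I would write $p_n = \sum_{k=0}^n c_k\, \beta_{n-k}$ for unique coefficients $c_k \in \MU_{2k}$ and solve for them by pairing with $x^{n-i}$:
\[
[\CP^i] \;=\; \langle x^{n-i}, p_n \rangle \;=\; \sum_{k=0}^n c_k\, \langle x^{n-i}, \beta_{n-k} \rangle \;=\; c_i.
\]
This yields $c_i = [\CP^i]$, giving the claim.

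The only nontrivial step is the geometric identification $(\iota_j)_*(y^i) = [\CP^{j-i}]$, and that is essentially tautological: $y^i$ is represented by a linear subspace, whose image under $\iota_j$ is itself a linearly embedded $\CP^{j-i}$ in $\CP^\infty$, which then pushes forward to $[\CP^{j-i}] \in \MU_{2(j-i)}$. Compared with brute-force inversion of the triangular matrix $(a_{1i})$ from Lemma~\ref{lem:MUBaseChange}, this pairing approach avoids any engagement with the explicit coefficients of the formal group law and produces the answer in closed form immediately.
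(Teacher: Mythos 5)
Your argument is correct, and it takes a genuinely different route from the paper's.

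The paper proves this by explicit matrix inversion: it substitutes the formula $\beta_{n-i} = \sum_j a_{1j}\,p_{n-i-j}$ from the preceding lemma into the proposed expression, then collapses the double sum using the identity $\sum_{i+j=k}[\CP^i]\,a_{1j} = \delta_{k0}$, which in turn is extracted from the preceding lemma by pushing $\beta_k$ forward to a point. In other words, the paper verifies after the fact that the two triangular change-of-basis matrices compose to the identity. You instead bypass the inversion entirely: you compute the pairings $\langle x^i, p_j \rangle = [\CP^{j-i}]$ directly from the geometry of linear embeddings, and then read off the coefficients of $p_n$ in the $\beta$-basis by duality. This is cleaner in that it never touches the formal-group-law coefficients $a_{1j}$ and produces the coefficients in one step, at the cost of requiring the geometric input that $y^i \frown 1_{\CP^j}$ pushes forward to $[\CP^{j-i}]$ in $\MU_*$. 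That input is standard (iterated Euler classes of $\Oc(1)$ cut out linear subspaces, and linear subspaces are stably complex), and you justify it adequately. One minor caveat worth being explicit about: the step $y^i \frown 1_{\CP^j} = [\CP^{j-i}\hookrightarrow\CP^j]$ is really an inductive application of the projection formula together with the fact that $c_1(\Oc(1))$ is the Euler class of $\Oc(1)$, whose zero locus is a hyperplane; calling it ``essentially tautological'' slightly understates that it is where the geometric content lives, and in fact it is the same kind of geometric observation the paper has to make (via Milnor hypersurfaces and $H_{1,n}$) in its proof of the preceding lemma. So your route is not strictly ``more elementary''; rather, it relocates the geometry from the first lemma to this one.
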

\begin{proof}
Note that $\sum_{i+j = k} [\CP^i] \cdot a_{1j}$ is the pushforward of $\beta_k$ to $\MU_{2k}$ by Lemma~\ref{lem:MUBaseChange}, and therefore
\[
\sum_{i+j = k} [\CP^i] \cdot a_{1j} = \delta_{k0}.
\]
We then verify that Eqs.~\eqref{eq:MUBaseChange1} and~\eqref{eq:MUBaseChange2} are inverses of each others by explicit computations:
\begin{align*}
    \sum_{i=0}^n [\CP^i] \cdot \sum_{j=0}^{n-i} a_{1j} \cdot p_{n-i-j} &= \sum_{i,j \geq 0} [\CP^i] \cdot a_{1j} \cdot p_{n-i-j} \\
    &= p_n
\end{align*}
and
\begin{align*}
    \sum_{i=0}^n a_{1i} \cdot \sum_{j=0}^{n-i} [\CP^j] \cdot \beta_{n-i-j} &= \sum_{i,j \geq 0} a_{1i} \cdot [\CP^j] \cdot \beta_{n-i-j} \\
    &= \beta_n,
\end{align*}
proving the claim.
\end{proof}

Next, we use the above results to find an explicit formula for the coproduct of $\MU_*(\CP^\infty)$ in terms of the $p_i$-basis.

\begin{prop}\label{prop:MUCoprodFormula}
The equality
\begin{equation}\label{eq:MUCoprod}
\Delta_*(p_n) = \sum_{i,j \geq 0} a_{1,i+j-n} \cdot p_{n-i} \otimes p_{n-j}
\end{equation}
holds in $\MU_*(\CP^\infty \times \CP^\infty)$.
\end{prop}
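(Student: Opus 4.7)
The plan is a purely formal bookkeeping argument. The coproduct is already understood in the dual basis $\{\beta_n\}$ via $\Delta_*(\beta_n) = \sum_{i+j=n}\beta_i\otimes\beta_j$, and the two change-of-basis identities (\ref{eq:MUBaseChange1}) and (\ref{eq:MUBaseChange2}), together with the orthogonality relation $\sum_{i+j=k}[\CP^i]\cdot a_{1j} = \delta_{k0}$ that was recorded in the proof of the second change of basis, give enough to convert the coproduct from the $\beta$-basis to the $p$-basis.

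First I would use the $\MU_*$-linearity of $\Delta_*$ together with (\ref{eq:MUBaseChange2}) and the coproduct formula for the $\beta$'s to write $\Delta_*(p_n)$ as the triple sum $\sum_{k+i+j=n}[\CP^k]\cdot\beta_i\otimes\beta_j$. Then I would substitute (\ref{eq:MUBaseChange1}) for each $\beta_i$ and $\beta_j$ separately, expanding the expression into a five-fold sum of terms of the form $[\CP^k]\cdot a_{1a}\cdot a_{1b}\,p_s\otimes p_t$ with the indices constrained by $k+s+t+a+b = n$.

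The key step is to fix the exponents $s, t$ and recognize that the coefficient of $p_s\otimes p_t$, namely $\sum_{k+a+b = n-s-t}[\CP^k]\cdot a_{1a}\cdot a_{1b}$, collapses via the orthogonality identity: summing first over $k$ and $b$ with $a$ held fixed produces $\delta_{n-s-t-a,\,0}$ and kills all but the term $a = n-s-t$, leaving the single coefficient $a_{1,n-s-t}$. This gives $\Delta_*(p_n) = \sum_{s,t\ge 0}a_{1,n-s-t}\,p_s\otimes p_t$, and the substitution $(i,j) = (n-s,n-t)$ rewrites this in the form stated in the proposition, with the convention that $a_{1,m} = 0$ for $m<0$ (equivalently $p_{n-i} = 0$ for $i>n$, so the effective summation range is $0\le i,j\le n$ with $i+j\ge n$).

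No real obstacle arises; the computation is a short formal manipulation, and the only non-trivial ingredient is the orthogonality identity between $[\CP^*]$ and $a_{1*}$, which has already been established. The only thing that requires a moment of attention is keeping the indices straight when passing between $(s,t)$ and $(i,j)$.
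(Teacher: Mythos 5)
Your argument is correct and follows essentially the same route as the paper's proof: both start from the $\beta$-basis coproduct formula, expand $p_n$ via the second change of basis, and then convert back to the $p$-basis. The only cosmetic difference is that the paper avoids the five-fold sum and explicit use of the orthogonality identity by recognizing $\sum_j [\CP^{n-i-j}]\,\beta_j = p_{n-i}$ directly (so only one $\beta$ needs to be expanded), whereas you expand both $\beta$'s and collapse the resulting coefficient with the orthogonality relation; the index bookkeeping in your version checks out.
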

\begin{proof}
We compute that
\begin{align*}
\Delta_*(p_n) &= \sum_{k=0}^n [\CP^k] \cdot \Delta_*(\beta_{n-k}) \\
&= \sum_{i,j \geq 0}  [\CP^{n-i-j}] \cdot \beta_{i} \otimes \beta_{j} \\
&= \sum_{i = 0}^n \beta_i \otimes p_{n-i} \\
&= \sum_{i = 0}^n \sum_{l = 0}^i a_{1l} \cdot p_{i-l} \otimes p_{n-i}.
\end{align*}
As the last sum is equivalent to the right hand side of Eq.~\eqref{eq:MUCoprod} up to reindexing of the sum, we are done.
\end{proof}

Our next goal is to understand the product of $\MU_*(\CP^\infty)$, which is given by pushforward along the infinite Segre embedding $\sigma \colon \CP^\infty \times \CP^\infty \to \CP^\infty$. Note that $\MU^*(\CP^\infty \times \CP^\infty) \cong \MU^*[[x_1, x_2]]$, where $x_i$ is the pullback of the first Chern class of the tautological line bundle $\Oc(1)$ on the $i$th factor. As $s^*(\Oc(1)) = \Oc(1,1)$, the pullback of $x$ along the Segre embedding can be expressed in terms of the formal group law \cite[Lemma~3.3(f)]{ravenel-wilson}
\begin{equation}
    \sigma^*(x) = \sum_{i,j \geq 0} a_{ij} x_1^i \smile x_2^j \in  \MU^*(\CP^\infty \times \CP^\infty).
\end{equation}
From this equation, we can derive a formula for $s_*$ in terms of the dual basis $\beta_i$.

\begin{lem}\label{lem:DualBasisMUProd}
The equality
\[
\sigma_*(\beta_i \otimes \beta_j) = \sum_{k = 0}^{i+j} a^{(k)}_{ij} \beta_k
\]
holds, where $a^{(k)}_{nm}$ is the coefficient of $x_1^n x_2^m$ in $(\sum_{i,j \geq 0} a_{ij}x_1^i x_2^j)^k$.
\end{lem}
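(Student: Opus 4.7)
The plan is to prove the formula by dualizing: we compute the coefficients of $\sigma_*(\beta_i \otimes \beta_j)$ in the basis $\{\beta_k\}$ by pairing with the dual basis $\{x^k\}$ of $\MU^*(\CP^\infty)$.

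First, I would invoke the projection formula for the Segre map $\sigma \colon \CP^\infty \times \CP^\infty \to \CP^\infty$, which gives the adjunction
\[
\langle \sigma^*(\alpha), \gamma \rangle \;=\; \langle \alpha, \sigma_*(\gamma) \rangle
\]
for $\alpha \in \MU^*(\CP^\infty)$ and $\gamma \in \MU_*(\CP^\infty \times \CP^\infty)$. Since $\MU_*(\CP^\infty)$ is free on $\{\beta_k\}$ with $\langle x^i, \beta_j \rangle = \delta_{ij}$, any homology class is determined by its pairings against the powers $x^k$, and we may write $\sigma_*(\beta_i \otimes \beta_j) = \sum_k c_k\, \beta_k$ with $c_k = \langle x^k, \sigma_*(\beta_i \otimes \beta_j) \rangle$.

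Next I would compute these coefficients explicitly. Since $\sigma^*(\Oc(1)) = \Oc(1,1)$, the formal group law gives $\sigma^*(x) = F(x_1, x_2) := \sum_{n,m \geq 0} a_{nm} x_1^n x_2^m$, so that
\[
\sigma^*(x^k) \;=\; F(x_1, x_2)^k \;=\; \sum_{n,m \geq 0} a^{(k)}_{nm}\, x_1^n x_2^m
\]
by the very definition of $a^{(k)}_{nm}$. Combining this with the adjunction and with the Künneth identification under which $\{\beta_i \otimes \beta_j\}$ is dual to the monomial basis $\{x_1^n x_2^m\}$, we obtain
\[
c_k \;=\; \langle \sigma^*(x^k), \beta_i \otimes \beta_j \rangle \;=\; \sum_{n,m \geq 0} a^{(k)}_{nm}\, \langle x_1^n x_2^m, \beta_i \otimes \beta_j \rangle \;=\; a^{(k)}_{ij},
\]
which is precisely the claimed formula.

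Finally, I would justify the truncation at $k = i+j$. Because $a_{00} = 0$, the power series $F(x_1, x_2)$ has no constant term, so $F^k$ is supported on monomials of total bidegree $\geq k$. Consequently $a^{(k)}_{ij}$ vanishes whenever $k > i+j$, making the sum finite. The main obstacle is essentially bookkeeping---verifying that the pairing and the Künneth duality behave as expected---but no deep input beyond the projection formula and the already-recorded properties of $\{\beta_j\}$ is needed.
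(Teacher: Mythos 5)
Your proof is correct and follows essentially the same route as the paper: in both cases one extracts the coefficient of $\beta_k$ by pairing against $x^k$, applies the projection formula (your adjunction $\langle \sigma^*\alpha,\gamma\rangle = \langle\alpha,\sigma_*\gamma\rangle$ is the same step) to replace $x^k$ with $\sigma^*(x^k) = F(x_1,x_2)^k$, and then pairs against $\beta_i\otimes\beta_j$ to read off $a^{(k)}_{ij}$. Your explicit remark that $a_{00}=0$ forces $a^{(k)}_{ij}=0$ for $k>i+j$ is a small detail the paper leaves implicit, but otherwise the two arguments coincide.
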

\begin{proof}
The coefficient of $\beta_k$ in $\sigma_*(\beta_i \otimes \beta_j)$ coincides with the pushforward of 
\[
x^k \frown \sigma_*(\beta_i \otimes \beta_j) \in \MU_{2(i+j-k)}(\CP^\infty)
\]
to $\MU_{2(i+j-k)}$. By projection formula, this coincides with the pushforward of 
\[
\bigg( \sum_{i,j \geq 0} a_{ij} x_1^i \smile x_2^j \bigg)^k \frown \beta_i \otimes \beta_j \in \MU_{2(i+j-k)}(\CP^\infty \times \CP^\infty)
\]
to $\MU_{2(i+j-k)}$, which is exactly $a^{(k)}_{ij}$.
\end{proof}

Combining the above results, we can understand the product in terms of the $p_i$ basis. 

\begin{prop}\label{prop:MUProd}
We have
\[
\sigma_*(p_n \otimes p_m) = \sum_{r=0}^{n+m} s^{(r)}_{n,m} \cdot p_r \in \MU_{2(n+m)}(\CP^\infty),
\]
where 
\[
s^{(r)}_{n,m} := \sum_{i,j,k \geq 0} [\CP^i] \cdot [\CP^j] \cdot a^{(r+k)}_{n-i,m-j} \cdot a_{1,k} \in \MU_{2(n+m-r)}. 
\]
\end{prop}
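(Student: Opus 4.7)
The plan is to compute $\sigma_*(p_n \otimes p_m)$ by passing through the dual basis $\{\beta_i\}$, where the product $\sigma_*$ is already described by Lemma~\ref{lem:DualBasisMUProd}. The two change-of-basis formulas Eq.~\eqref{eq:MUBaseChange1} and Eq.~\eqref{eq:MUBaseChange2} will then let us return to the geometric basis $\{p_i\}$, and the claimed coefficients $s^{(r)}_{n,m}$ will emerge by reading off the coefficient of each $p_r$ in the resulting triple sum.

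Concretely, I will first apply Eq.~\eqref{eq:MUBaseChange2} to both factors in the tensor product to write
\[
p_n \otimes p_m \;=\; \sum_{i,j \geq 0} [\CP^i] \cdot [\CP^j] \cdot \bigl(\beta_{n-i} \otimes \beta_{m-j}\bigr) \in \MU_*(\CP^\infty \times \CP^\infty).
\]
Then I will push forward along $\sigma$ and invoke Lemma~\ref{lem:DualBasisMUProd} to obtain
\[
\sigma_*(p_n \otimes p_m) \;=\; \sum_{i,j,s \geq 0} [\CP^i] \cdot [\CP^j] \cdot a^{(s)}_{n-i,\,m-j} \cdot \beta_s.
\]
Finally, I will substitute Eq.~\eqref{eq:MUBaseChange1} to rewrite each $\beta_s$ as $\sum_{k=0}^s a_{1k}\, p_{s-k}$, and reindex via $r := s-k$. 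After swapping the order of summation and reading off the coefficient of $p_r$, the claimed formula for $s^{(r)}_{n,m}$ falls out directly.

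This is essentially a bookkeeping argument: every structural input is already in place, and no new geometric content is needed beyond what Lemmas \ref{lem:MUBaseChange} and \ref{lem:DualBasisMUProd} supply. The only thing to watch is that the summation ranges in the change-of-basis formulas are automatically enforced by the vanishing $a^{(s)}_{n',m'} = 0$ for $s > n' + m'$ (since $a_{ij}$ has positive bidegree whenever $(i,j) \neq (1,0),(0,1)$) and by $a_{1k} = 0$ for $k > s$ being unnecessary to impose separately. Consequently, the main (and only) obstacle is ensuring that the triple sum collects into the claimed closed form without spurious terms; this is a routine reindexing once all three formulas are chained together.
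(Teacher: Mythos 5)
Your proposal is correct and follows essentially the same route as the paper's proof: expand both tensor factors via Eq.~\eqref{eq:MUBaseChange2}, apply $\sigma_*$ term-by-term using Lemma~\ref{lem:DualBasisMUProd}, then convert back to the $p$-basis via Eq.~\eqref{eq:MUBaseChange1} and reindex. The only slight imprecision is the parenthetical claim that ``$a_{1k}=0$ for $k>s$''; that is not what truncates the inner sum (the Lazard coefficient $a_{1k}\in\MU_{2k}$ is generally nonzero) — rather, the terms with $k>s$ vanish because $p_{s-k}$ with negative index is zero, exactly as the upper bound in Eq.~\eqref{eq:MUBaseChange1} reflects. This is cosmetic and does not affect the argument.
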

\begin{proof}
By Eq.~\ref{eq:MUBaseChange2}, we write 
\begin{align*}
\sigma_*(p_n \otimes p_m) &= \sigma_*((\sum_i^n [\CP^i]\cdot \beta_{n-i}) \otimes (\sum_j^m [\CP^j]\cdot \beta_{m-j})) \\
&= \sum_{i,j} [\CP^i] \cdot [\CP^j] \cdot \sigma_*(\beta_{n-i}\otimes \beta_{m-j}).
\end{align*}
By Lemma~\ref{lem:DualBasisMUProd}, this equation expands out to
\[
    \sum_{i,j} [\CP^i]\cdot[\CP^j] \cdot \sigma_*(\beta_{n-i}\otimes \beta_{m-j}) = \sum_{i,j} [\CP^i]\cdot[\CP^j] \cdot (\sum_{k=0}^{n-i+m-j} a^{(k)}_{n-i,m-j} \cdot \beta_k)
\]
and by Eq.~\eqref{eq:MUBaseChange1}, the right hand term is 
\[
\sum_{i,j}[\CP^i]\cdot[\CP^j] \cdot  (\sum_{k=0}^{n-i+m-j}  a^{(k)}_{n-i,m-j} \cdot (\sum_{l=0}^k a_{1,l}\cdot p_{k-l}))).
\]
Rearranging the sum, we note that the index for $p_r$ ranges from $0$ to $n+m$.
\end{proof}

\begin{rem}
Over a field $k$ of characteristic 0, Lee and Pandharipande consider algebraic cobordism classes $[X,\Ec]$ of smooth varieties equipped with a vector bundle \cite{pandharipande:2012}. In the rank-one case, such classes form the \textit{algebraic cobordism ring of line bundles} $\Omega_{*,1}(k)$, which is an algebra over the algebraic bordism ring $\Omega_*(k)$ of Levine--Morel. The product is given by formula 
\[
[X_1, \Ls_1] \cdot [X_2, \Ls_2] = [X_1 \times X_2, \Ls_1 \boxtimes \Ls_2].
\]
They prove that $\Omega_{*,1}(k)$ is a free $\Omega_*(k)$-module with basis given by
\[
\tilde p_i := [\Pb^i, \Oc(1)] \in \Omega_{i,1}(k).
\]
Lee and Pandharipande then ask \cite[Section~3]{pandharipande:2012} for an explicit formula for the product $\Omega_{i,1}(k)$ in terms of this basis.

There is a natural map $\eta \colon \Omega_*(\Pb^\infty) \to \Omega_{*,1}(k)$ given by
\[
[X \stackrel{f}{\to} \Pb^\infty] \mapsto [X; f^* \Oc(1)],
\]
which is an isomorphism as $\Omega_*$ satisfies projective bundle formula \cite[Section~3.5]{levine-morel}, and therefore $\Omega_*(\Pb^\infty)$ is a free $\Omega_*(k)$-module with a basis given by the lifts of $\tilde p_i$.\footnote{More precisely, $\Omega_*(\Pb^\infty) := \colim_{i} \Omega_*(\Pb^i)$, where the colimit is taken along a fixed sequence of linear embeddings $\Pb^0 \hook \Pb^1 \hook \cdots \hook \Pb^i \hook \Pb^{i+1} \hook \cdots$.} The map $\eta$ is an isomorphism of rings, where the ring structure on $\Omega_*(\Pb^\infty)$ is given by the pushforward along the Segre embedding. Moreover, as $\Omega_*(k) \cong \MU_{2*}$ \cite[Section~4.3]{levine-morel}, we see that there is a natural isomorphism of Hopf algebras
\[
\MU_{2*}(\CP^\infty) \cong \Omega_{*}(\Pb^\infty),
\]
and therefore the product structure of $\Omega_{*,1}(k)$ is completely determined by that of $\MU_{2*}(\CP^\infty)$.
\end{rem}

As a consequence of the above discussion, we obtain the following result.

\begin{cor}\label{cor:LeePQuest}
Let $k$ be a field of characteristic 0. Then 
\[
[\Pb^n \times \Pb^m; \Oc(1,1)] = \sum_{r=0}^{n+m} \tilde s^{(r)}_{n,m} \cdot  [\Pb^r, \Oc(1)] \in \Omega_{m+n,1}(k)
\]
where 
\[
\tilde s^{(r)}_{n,m} := \sum_{i,j,k \geq 0} [\Pb^i] \cdot [\Pb^j] \cdot a^{(r+k)}_{n-i,m-j} \cdot a_{1,k} \in \Omega_{n+m-r}(k). 
\]
\end{cor}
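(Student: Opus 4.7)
The plan is to reduce the identity to Proposition~\ref{prop:MUProd}, which we have already established over $\MU_{2*}(\CP^\infty)$, via the two isomorphisms recorded in the remark preceding the corollary. First, I would unwind the definition of the product on $\Omega_{*,1}(k)$ at the two classes in question: since $\Oc(1,1) = \Oc(1) \boxtimes \Oc(1)$ on $\Pb^n \times \Pb^m$, we have the identity
\[
[\Pb^n \times \Pb^m; \Oc(1,1)] = [\Pb^n, \Oc(1)] \cdot [\Pb^m, \Oc(1)] = \tilde p_n \cdot \tilde p_m
\]
directly from the defining formula for the ring structure on $\Omega_{*,1}(k)$. Thus the task reduces to computing $\tilde p_n \cdot \tilde p_m$ in the basis $\{\tilde p_r\}$.

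Next, I would transport this product to $\MU_{2*}(\CP^\infty)$ via the chain of isomorphisms from the remark: $\Omega_{*,1}(k) \xleftarrow{\eta} \Omega_*(\Pb^\infty) \cong \MU_{2*}(\CP^\infty)$, under which $\tilde p_r$ corresponds to $p_r$ and the product on the left hand side is the pushforward along the Segre embedding on the right hand side. (The identification of multiplicative structures is spelled out in the remark and follows from the fact that $\Oc(1) \boxtimes \Oc(1) = \sigma^*\Oc(1)$ together with naturality of the class $[X, f^*\Oc(1)]$ in $f$.) Under this identification, $\tilde p_n \cdot \tilde p_m$ corresponds to $\sigma_*(p_n \otimes p_m) \in \MU_{2(n+m)}(\CP^\infty)$.

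Now I would apply Proposition~\ref{prop:MUProd} to conclude
\[
\sigma_*(p_n \otimes p_m) = \sum_{r = 0}^{n+m} s^{(r)}_{n,m} \cdot p_r,
\]
with coefficients
\[
s^{(r)}_{n,m} = \sum_{i,j,k \geq 0} [\CP^i] \cdot [\CP^j] \cdot a^{(r+k)}_{n-i,m-j} \cdot a_{1,k} \in \MU_{2(n+m-r)}.
\]
Finally, I would translate the coefficient ring back: under the Levine--Morel isomorphism $\Omega_*(k) \cong \MU_{2*}$ used in the remark, the class $[\CP^i] \in \MU_{2i}$ corresponds to $[\Pb^i] \in \Omega_i(k)$, while the formal group law coefficients $a_{ij}$ on both sides agree (they are the structure constants of the universal formal group law on the Lazard ring, which identifies with $\MU_{2*}$ and with $\Omega_*(k)$). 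Consequently the coefficients $s^{(r)}_{n,m}$ match $\tilde s^{(r)}_{n,m}$ term by term, and the displayed identity in $\Omega_{n+m,1}(k)$ follows.

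The only non-routine point is checking that $\eta$ intertwines the two product structures, i.e.\ that pushforward along the infinite Segre embedding on $\Omega_*(\Pb^\infty)$ corresponds to the tensor product of line bundles on $\Omega_{*,1}(k)$. This is essentially formal from $s^*\Oc(1) = \Oc(1,1)$ and the compatibility of algebraic cobordism pushforwards with the operation $f \mapsto [X, f^*\Oc(1)]$, but it is the one step where the argument depends on more than the explicit formulas already proved. With that compatibility in hand, the rest is a pure transcription of Proposition~\ref{prop:MUProd} across the isomorphisms, as outlined above.
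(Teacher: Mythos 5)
Your proposal is correct and follows essentially the same route as the paper: the corollary is deduced directly from Proposition~\ref{prop:MUProd} by transporting the product across the isomorphisms $\Omega_{*,1}(k) \cong \Omega_*(\Pb^\infty) \cong \MU_{2*}(\CP^\infty)$ described in the preceding remark, with $\tilde p_r \leftrightarrow p_r$ and the Segre pushforward corresponding to the tensor product of line bundles. The compatibility of $\eta$ with the two products, which you flag as the one non-routine point, is exactly the assertion already made in that remark, so your argument is a faithful transcription of the paper's.
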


\section{Category \rev{of} motivic spectra satisfying derived blowup excision: $\MS_S^\dbe$}\label{sect:MSdbe}

Here, we construct a version of motivic spectra that represent cohomology theories satisfying excision in derived blowups in the sense of Khan--Rydh \cite{khan-rydh}, and compare it with some other natural variants of motivic spectra. Our exposition follows closely that of \cite[Section~2]{annala-iwasa:MotSp} and \cite{AHI}. Throughout this section, $S$ is a derived scheme, and $\Sch^\afp_S$ is the $\infty$-category of derived schemes almost of finite presentation\footnote{\rev{See e.g. \cite[Definition~68]{annala-thesis} for the definition of almost finite presentation.}} over $S$.

If $\Cc$ is a category, we will denote by $\Pc(\Cc, \Dc)$ the category of $\Dc$-valued presheaves on $\Cc$. The notation $\Pc(\Cc)$ stands for presheaves valued in spaces, and together with its cartesian symmetric monoidal structure, it is the prototypical example of a presentably symmetric monoidal category. Stabilization $\Sigma^\infty_+ \colon \Pc(\Cc) \to \Pc(\Cc, \Sp)$ is an example of a map of presentably symmetric monoidal categories when the target is equipped with the pointwise smash product.

By decorating $\Pc$ with subscripts, we mean the full subcategory of $\Pc(\Cc, \Dc)$ satisfying some conditions, such as descent with respect to a Grothendieck topology. This is best illuminated by an example.

\begin{defn}
Denote by $\Pc_{\Nis, \dbe}(\Sch^\afp_S, \Sp)$ the category of Nisnevich sheaves of spectra on $\Sch^\afp_S$ that satisfy \emph{derived blowup excision}, i.e., send squares
\begin{equation}\label{eq:dbe}
\begin{tikzcd}
    E \arrow[r,hook] \arrow[d] & \Bl_Z(X) \arrow[d] \\
    Z \arrow[r,hook]{}{i} & X,
\end{tikzcd}    
\end{equation}
where $i$ is a derived regular embedding, and $E$ is the exceptional divisor of the blowup, to cartesian square of spectra.
\end{defn}

The fully faithful inclusion $\Pc_{\Nis, \dbe}(\Sch^\afp_S, \Sp) \hook \Pc(\Sch^\afp_S, \Sp)$ to presheaves of spectra is the right adjoint of a symmetric monoidal accessible localization\footnote{It suffices to check that the symmetric monoidal structure on $\Pc(\Sch^\afp_S, \Sp)$ preserves Nis and dbe equivalences separately in both variables (see e.g. \cite[Proposition~A.5]{nikolaus-scholze}). This follows from the fact that both Nisnevich and derived blowup squares are preserved under pullbacks along maps of derived schemes. All the localizations considered in this section are symmetric monoidal for similar reason.} (see \cite[Section~5.5.4]{HTT})
\[
L_{\Nis,\dbe} \colon \Pc(\Sch^\afp_S, \Sp) \to \Pc_{\Nis, \dbe}(\Sch^\afp_S, \Sp).
\]
Thus, one can associate functorially to each $X \in \Sch^\afp_S$ the object
\[
X_+ := L_{\Nis,\dbe}(\underline{X}) \in \Pc_{\Nis, \dbe}(\Sch^\afp_S, \Sp),
\]
where $\underline X$ is the presheaf of spectra given by suspension spectrum of the functor of points of $X$. By slight abuse of notation, we denote by
\[
X := \cofib(S_+ \to X_+) \in \Pc_{\Nis, \dbe}(\Sch^\afp_S, \Sp)
\]
the object associated to a pointed object $S \to X$ in $\Sch^\afp_S$.

Recall that the symmetric monoidal structure on $\Pc_{\Nis, \dbe}(\Sch^\afp, \Sp)$ is uniquely determined by the symmetric monoidality of 
\[
\Sch^\afp_S \to \Pc_{\Nis, \dbe}(\Sch^\afp_S, \Sp),
\]
i.e., $X_+ \otimes Y_+ \simeq (X \times Y)_+$, together with the fact that $\otimes$ commutes with colimits separately in both variables. 

Recall that in topology, the category of spectra is obtained from the category of pointed spaces by making the suspension endomorphism an automorphism in a universal fashion. As tensoring with the projective line $\Pb^1$, pointed at $\infty$, is an algebro-geometric version of suspension, we define motivic spectra by inverting the functor $\Pb^1 \otimes -$ using the general notion of $\otimes$-inversion (see e.g. \cite[Section~1]{annala-iwasa:MotSp}).

\begin{defn}[Motivic spectra satisfying derived blowup excision]\label{def:MSdbe}
We define
\[
\MS^\dbe_S := \Pc_{\Nis, \dbe}(\Sch^\afp_S, \Sp)[(\Pb^1)^{-1}].
\]
\end{defn}

For the convenience of the reader, we recall that the usual stable $\Ab^1$-invariant motivic homotopy category is defined as
\[
\SH_S := \Pc_{\Nis, \Ab^1}(\Sm_S, \Sp)[(\Pb^1)^{-1}]
\]
and the category of motivic spectra considered in \cite{AHI:atiyah} is defined as 
\[
\MS_S := \Pc_{\Nis, \sbe}(\Sm_S, \Sp)[(\Pb^1)^{-1}],
\]
where $\Ab^1$ stands for $\Ab^1$-invariance, and sbe stands for \emph{smooth blowup excision}.\footnote{A presheaf on $\Sm_S$ satisfies smooth blowup excision if it sends all smooth blowup squares (\rev{i.e.} squares like in Eq.~\eqref{eq:dbe} but where $X,Z \in \Sm_S$) to Cartesian squares.} There is a symmetric monoidal localization $\MS_S \to \SH_S$ called $\Ab^1$-localization.

\begin{prop}
The left Kan extension functor $\Pc(\Sm_S) \to \Pc(\Sch^\afp_S)$ induces a symmetric monoidal functor
\[
\MS_S \to \MS^\dbe_S.
\]
\end{prop}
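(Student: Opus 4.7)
The plan is to exhibit the desired symmetric monoidal functor via the universal property of $\MS_S$. By construction, $\MS_S$ is the presentably symmetric monoidal $\infty$-category initial among those receiving a symmetric monoidal functor from $\Sm_S$ that (i) is a Nisnevich sheaf of spectra, (ii) satisfies smooth blowup excision, and (iii) sends $\Pb^1$ (pointed at $\infty$) to a tensor-invertible object. Therefore, producing a symmetric monoidal colimit-preserving functor $\MS_S \to \MS^\dbe_S$ is the same as producing a symmetric monoidal functor $F\colon \Sm_S \to \MS^\dbe_S$ satisfying (i)--(iii). The candidate is the composite
\[
F\colon \Sm_S \xhookrightarrow{\iota} \Sch^\afp_S \longrightarrow \MS^\dbe_S,
\]
where the second arrow is the tautological symmetric monoidal functor obtained from Definition~\ref{def:MSdbe}.

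Next, I would verify (i)--(iii). Symmetric monoidality is automatic: the inclusion $\iota$ preserves finite products (a fiber product of smooth $S$-schemes agrees with the derived fiber product, since smooth morphisms are flat), and the functor $\Sch^\afp_S \to \MS^\dbe_S$ is symmetric monoidal by construction. For (i), every elementary distinguished Nisnevich square in $\Sm_S$ is, in particular, a Nisnevich square of afp derived $S$-schemes, so Nisnevich descent of $\MS^\dbe_S$ implies Nisnevich descent of $F$. For (iii), $F(\Pb^1_S)$ is the object $\Pb^1_S$ in $\MS^\dbe_S$, which is tensor-invertible by construction. The potentially delicate property is (ii): a smooth blowup square in $\Sm_S$ must be sent to a cartesian square in $\MS^\dbe_S$. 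This reduces to the statement that for a closed embedding $Z \hookrightarrow X$ of smooth $S$-schemes, the classical blowup coincides with the Khan--Rydh derived blowup, and the classical exceptional divisor coincides with the derived exceptional divisor. This is well known: smooth closed embeddings of smooth schemes are derived regular embeddings, and in this situation the classical and derived blowups agree (see e.g.\ \cite{khan-rydh}), so the smooth blowup square \emph{is} the derived blowup square associated with $Z \hookrightarrow X$, and hence is cartesian in $\MS^\dbe_S$ by dbe.

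Granted (i)--(iii), the universal property supplies a unique symmetric monoidal colimit-preserving functor $\MS_S \to \MS^\dbe_S$ extending $F$, and a quick unwinding shows it agrees with the functor induced by left Kan extension along $\iota$ followed by the localizations. The main obstacle above is really just property (ii); once one accepts the compatibility of classical and derived blowups for smooth pairs, the rest is formal. If one prefers a direct approach at the level of presheaves, one can equivalently verify that the symmetric monoidal left Kan extension $\iota_!\colon \Pc(\Sm_S,\Sp) \to \Pc(\Sch^\afp_S,\Sp)$ (for the Day convolution symmetric monoidal structures) sends the three generating classes of local equivalences defining $\MS_S$ into the local equivalences defining $\MS^\dbe_S$, which is exactly the content of (i)--(iii), and conclude by the universal property of localization.
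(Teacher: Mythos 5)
Your proposal is correct and takes essentially the same approach as the paper: both arguments hinge on the observation that a smooth blowup square is a derived blowup square, and then appeal to universal properties. The only cosmetic difference is that the paper factors the argument into two steps (left Kan extension at the level of localized presheaf categories, then Robalo's universal property of $\otimes$-inversion), whereas you package these into the single universal property of $\MS_S$ as the initial presentably symmetric monoidal target for a symmetric monoidal, Nisnevich-local, sbe-local, $\Pb^1$-inverting functor out of $\Sm_S$.
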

\begin{proof}
As every smooth blowup square is a derived blowup square, left Kan extension induces a symmetric monoidal colimit preserving functor
\[
\Pc_{\Nis, \sbe}(\Sm_S, \Sp) \to \Pc_{\Nis, \dbe}(\Sch^\afp_S, \Sp).
\]
The claim then follows from the universal property of $\otimes$-inversion, see e.g. \cite[Proposition~2.9]{robalo:2015}.
\end{proof}

It is an interesting question whether or not the above functor is fully faithful. An analogous claim in $\Ab^1$-homotopy theory was investigated in \cite{khan:cdh}.

Next, we define versions of stable motivic homotopy theory that satisfy excision for abstract blowup squares. 

\begin{defn}[Abe-local motivic spectra]
We define \emph{abe-local motivic spectra} as
\[
\MS^\abe_S := \Pc_{\Nis,\abe}(\Sch^\afp_S, \Sp)[(\Pb^1)^{-1}],
\]
and \emph{abe-local $\Ab^1$-invariant motivic spectra} as
\[
\SH^\abe_S := \Pc_{\Nis,\abe, \Ab^1}(\Sch^\afp_S, \Sp)[(\Pb^1)^{-1}],
\]
where abe stands for \emph{abstract blowup excision}, and $\Ab^1$ stands for $\Ab^1$-invariance.
\end{defn}

As derived blowup squares are abstract blowup squares,\footnote{\rev{Indeed, the structure morphism of a derived blowup is a proper morphism that is an equivalence away from the center, see \cite[Theorem~4.1.5]{khan-rydh}}.} we obtain the following result.

\begin{prop}
There exists a sequence
\[
\MS^\dbe_S \to \MS^\abe_S \to \SH^\abe_S
\]
of symmetric monoidal localizations.
\end{prop}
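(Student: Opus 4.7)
The strategy mirrors that of the preceding proposition: construct the two localizations first at the level of presheaves of spectra, then transport them through the universal property of $\otimes$-inversion to the categories of $\Pb^1$-spectra.

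For the first arrow, I would observe that, by construction, every derived blowup square is an abstract blowup square (the blowup $\Bl_Z(X) \to X$ is proper and an isomorphism away from the closed embedding $Z \hookrightarrow X$, and the map from the exceptional divisor $E$ back to $Z$ is the projection of a projective bundle, in particular surjective on points). Hence any presheaf that satisfies abstract blowup excision automatically satisfies derived blowup excision, giving a fully faithful inclusion
\[
\Pc_{\Nis,\abe}(\Sch^\afp_S, \Sp) \hookrightarrow \Pc_{\Nis,\dbe}(\Sch^\afp_S, \Sp).
\]
Just as in the discussion after Definition~\ref{def:MSdbe}, this inclusion is accessible and admits a left adjoint, and it is monoidal because both abstract and derived blowup squares are stable under base change along arbitrary maps of derived schemes; by the criterion cited in the earlier footnote (e.g.\ \cite[Proposition~A.5]{nikolaus-scholze}), the tensor product of $\Pc(\Sch^\afp_S,\Sp)$ preserves the relevant equivalences in each variable and so descends to a symmetric monoidal localization.

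For the second arrow, imposing $\Ab^1$-invariance on top of abe-excision gives a further fully faithful inclusion
\[
\Pc_{\Nis,\abe,\Ab^1}(\Sch^\afp_S,\Sp) \hookrightarrow \Pc_{\Nis,\abe}(\Sch^\afp_S,\Sp),
\]
which is again an accessible right adjoint to a symmetric monoidal localization by the same reasoning (the class of $\Ab^1$-equivalences is manifestly stable under smashing with an arbitrary presheaf).

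Finally, to pass to motivic spectra, I would apply the universal property of $\otimes$-inversion \cite[Proposition~2.9]{robalo:2015}: a symmetric monoidal colimit-preserving functor between presentably symmetric monoidal categories that carries an object with a chosen cyclic permutation equivalence to a symmetric object induces a symmetric monoidal functor after inverting $\Pb^1$ on both sides. Applied to the two localizations constructed above, this produces the desired sequence
\[
\MS^\dbe_S \to \MS^\abe_S \to \SH^\abe_S,
\]
both arrows being symmetric monoidal localizations. There is no real obstacle here; the only thing to check carefully is that the required squares (Nisnevich, abe, $\Ab^1$) are preserved by base change in a derived setting, but this is immediate from the definitions.
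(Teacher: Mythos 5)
Your overall approach matches the paper's: construct the two symmetric monoidal localizations at the level of presheaves of spectra and then pass to $\Pb^1$-spectra. The presheaf-level part is fine (in particular, your observation that abe-excision implies dbe-excision, and that both classes of squares are stable under base change, is exactly what is needed).

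However, there is a gap in your final step. Robalo's universal property of $\otimes$-inversion \cite[Proposition~2.9]{robalo:2015} produces a symmetric monoidal colimit-preserving functor out of $\Cc[(\Pb^1)^{-1}]$ once the target receives a symmetric monoidal colimit-preserving functor from $\Cc$ sending $\Pb^1$ to an invertible object. That gives you the two functors $\MS^\dbe_S \to \MS^\abe_S \to \SH^\abe_S$, but it does \emph{not} by itself tell you that these functors are localizations (i.e.\ that the right adjoints remain fully faithful after inverting $\Pb^1$). You assert "both arrows being symmetric monoidal localizations" without argument. The paper instead invokes a dedicated lemma, \cite[Lemma~1.5.4]{annala-iwasa:MotSp}, which is precisely the statement that a symmetric monoidal Bousfield localization between presentably symmetric monoidal categories induces a symmetric monoidal localization after $\otimes$-inverting an object. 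You should either cite that lemma, or supply the argument: roughly, one checks that the full subcategory of local objects inside $\Pc_{\Nis,\dbe}(\Sch^\afp_S,\Sp)$ is compatible with $\Pb^1$-loops, so that the inclusion of $\abe$-local (resp.\ $\abe$-and-$\Ab^1$-local) $\Pb^1$-spectra into all $\Pb^1$-spectra is again fully faithful and admits a left adjoint. As written, your proof establishes the maps but not the localization property.

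A minor aside: in justifying that derived blowup squares are abstract blowup squares, the relevant point is that $\Bl_Z(X) \to X$ is proper and restricts to an isomorphism over $X \setminus Z$; surjectivity of $E \to Z$ is not part of the definition and is not what you need to check.
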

\begin{proof}
Indeed, the symmetric monoidal localizations
\[
L_{\abe} \colon \Pc_{\Nis, \dbe}(\Sch^\afp_S, \Sp) \to \Pc_{\Nis,\abe}(\Sch^\afp_S, \Sp)
\]
and
\[
L_{\abe,\Ab^1} \colon \Pc_{\Nis, \dbe}(\Sch^\afp_S, \Sp) \to \Pc_{\Nis,\abe, \Ab^1}(\Sch^\afp_S, \Sp)
\]
induce localizations after inverting $\Pb^1$ by \cite[Lemma~1.5.4]{annala-iwasa:MotSp}.
\end{proof}

Above, we have defined $\SH^\abe_S$ starting from the the category of almost finitely presented derived $S$-schemes. Next, we compare it to the cdh-local version of stable $\Ab^1$-homotopy theory (see e.g. \cite{voevodsky:Hcdh}). We prove that, at least over noetherian derived schemes\footnote{Recall that a derived scheme $S$ is \emph{noetherian} if its underlying classical scheme $S_\cl$ is noetherian, and if the homotopy sheaves $\pi_i(\Oc_S)$ are coherent sheaves on $S_\cl$.} abe-local $\Ab^1$-invariant motivic spectra over $S$ coincide with the cdh-local stable $\Ab^1$-homotopy category over the classical truncation $S_\cl$.

The reason for restricting to the noetherian case is because the condition of being almost of finite presentation simplifies in that setting: if $S$ is noetherian, then a derived $S$-scheme $X$ is almost of finite presentation if and only if $X$ is noetherian and the map of underlying classical schemes $X_\cl \to S_\cl$ is of finite type. In particular, the canonical inclusion $S_\cl \hook S$ is almost of finite presentation.

\begin{prop}\label{prop:SHcdhNilInv}
Let $S$ be a noetherian derived scheme. Then there is a canonical equivalence
\[
\underline{\SH}^\cdh_{S_\cl} \cong \SH^\abe_S,
\]
where $S_\cl$ denotes the underlying classical scheme of $S$, and where
\[
\underline{\SH}^\cdh_{S_\cl} := \Pc_{\cdh, \Ab^1}(\Sch^\fp_{\tau(S)})_*[(\Pb^1)^{-1}]
\]
is defined as in \cite{khan:cdh}.
\end{prop}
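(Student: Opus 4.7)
The plan is to reduce the statement in two stages: first, rewrite the cdh-condition as the combination of Nisnevich and abstract blowup excision; second, observe that abe-locality forces nil-invariance, so a presheaf on $\Sch^\afp_S$ is essentially determined by its restriction to $\Sch^\fp_{S_\cl}$. After both reductions, the two sides agree, and $\Pb^1$-inversion carries over automatically.

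For the first step, I would invoke the standard theorem (whose $\infty$-categorical form is given e.g. in \cite{khan:cdh}) that on $\Sch^\fp_{S_\cl}$ a presheaf of spectra is a cdh-sheaf if and only if it is a Nisnevich sheaf satisfying abstract blowup excision. This yields the identification
\[
\Pc_{\cdh,\Ab^1}(\Sch^\fp_{S_\cl}, \Sp) \simeq \Pc_{\Nis,\abe,\Ab^1}(\Sch^\fp_{S_\cl}, \Sp).
\]
I would then recall that after $\Pb^1$-inversion, the pointed-space and spectrum-valued variants coincide, giving $\underline{\SH}^\cdh_{S_\cl} \simeq \Pc_{\cdh,\Ab^1}(\Sch^\fp_{S_\cl}, \Sp)[(\Pb^1)^{-1}]$; this follows from the fact that cyclic permutation of $\Pb^1{}^{\wedge 3}$ is homotopic to the identity, which forces $\Pb^1$-inversion to implement stabilization (see the discussion in \cite[\S 1]{annala-iwasa:MotSp} or \cite{robalo:2015}).

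For the second step, the noetherian hypothesis on $S$ ensures that $S_\cl \hook S$ is almost of finite presentation, so the inclusion $\iota \colon \Sch^\fp_{S_\cl} \hook \Sch^\afp_S$ is well defined. I would first establish \emph{nil-invariance}: for any $F \in \Pc_\abe(\Sch^\afp_S, \Sp)$ and any $X \in \Sch^\afp_S$, the map $F(X) \to F(X_\cl)$ is an equivalence. This is because $X_\cl \hook X$ has empty open complement (as $X_\cl$ and $X$ have the same underlying topological space), so the square
\[
\begin{tikzcd}
    X_\cl \arrow[r]{}{\Id} \arrow[d]{}{\Id} & X_\cl \arrow[d] \\
    X_\cl \arrow[r] & X
\end{tikzcd}
\]
is an abstract blowup square. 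Applying $F$ to this square and using that it is sent to a cartesian square forces the map $F(X) \to F(X_\cl)$ to be an equivalence. Using nil-invariance together with the general fact that left Kan extension along the fully faithful $\iota$ is fully faithful, I would conclude that the restriction
\[
\iota^* \colon \Pc_{\Nis,\abe,\Ab^1}(\Sch^\afp_S, \Sp) \to \Pc_{\Nis,\abe,\Ab^1}(\Sch^\fp_{S_\cl}, \Sp)
\]
is an equivalence: fully faithfulness follows from the fully faithfulness of left Kan extension, while essential surjectivity amounts to showing that any $G$ in the target extends uniquely to $\Sch^\afp_S$ via $G(X) := G(X_\cl)$, which is well defined by nil-invariance and inherits all the required descent/invariance properties because the truncation functor $X \mapsto X_\cl$ preserves Nisnevich squares, abstract blowups, and $\Ab^1$-bundles. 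Finally, since $\Pb^1$ lies in $\Sch^\fp_{S_\cl}$, $\iota^*$ is $\Pb^1$-linear and therefore descends to an equivalence after $\otimes$-inversion, producing the desired $\SH^\abe_S \simeq \underline{\SH}^\cdh_{S_\cl}$.

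The main obstacle will be verifying that the inductively natural comparison map is indeed the symmetric monoidal equivalence advertised, rather than only an equivalence of underlying stable $\infty$-categories. Concretely, one has to check that restriction along $\iota$ is symmetric monoidal on presheaves (which is clear since $\iota$ is symmetric monoidal and nil-invariant presheaves form a symmetric monoidal localization), and that the universal property of $\otimes$-inversion \cite[Proposition~2.9]{robalo:2015} transports this structure to the $\Pb^1$-inverted categories. A secondary technical point is the careful reconciliation of the pointed-space model of $\underline{\SH}^\cdh_{S_\cl}$ with the spectrum-valued model implicit in $\SH^\abe_S$; this is where the symmetric-invertibility of $\Pb^1$ is essential, and one should cite the standard comparison theorem rather than reprove it.
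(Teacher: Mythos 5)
Your overall strategy matches the paper's: rewrite cdh as Nisnevich plus abstract blowup excision, use abe-excision to force nil-invariance, conclude that the presheaf categories on $\Sch^\afp_S$ and $\Sch^\fp_{S_\cl}$ agree after localization, and then invert $\Pb^1$. However, there is a genuine gap in the nil-invariance step, and a smaller misdirection in the stabilization step.

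The square you use to establish nil-invariance,
\[
\begin{tikzcd}
    X_\cl \arrow[r]{}{\Id} \arrow[d]{}{\Id} & X_\cl \arrow[d] \\
    X_\cl \arrow[r] & X,
\end{tikzcd}
\]
is \emph{not} an abstract blowup square of derived schemes, because it is not cartesian: $X_\cl \times_X X_\cl$ is the derived self-intersection, which in general has nontrivial higher homotopy sheaves and is strictly larger than $X_\cl$. (This is precisely the issue -- closed immersions of derived schemes are not monomorphisms -- flagged in the remark at the end of this appendix as the reason abstract blowup squares fail to form a regular cd-structure on derived schemes.) The correct choice, used in the paper, is the square
\[
\begin{tikzcd}
    \emptyset \arrow[d] \arrow[r] & \emptyset \arrow[d] \\
    X_\cl \arrow[r,hook]& X,
\end{tikzcd}
\]
which \emph{is} a cartesian square (since $\emptyset \times_X X_\cl \simeq \emptyset$) with closed bottom arrow, proper right arrow, and trivially isomorphic open complements. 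Applying an abe-local $F$ and noting $F(\emptyset) \simeq 0$ gives the equivalence $F(X) \xrightarrow{\sim} F(X_\cl)$ directly. You should replace your square with this one.

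A secondary point: your invocation of the cyclic-permutation property of $(\Pb^1)^{\wedge 3}$ to justify that $\Pb^1$-inversion of pointed spaces and of spectra coincide is misdirected. Robalo's cyclic-permutation criterion controls whether the $\otimes$-inversion can be computed as a sequential colimit; it does not by itself say that inverting $\Pb^1$ inverts $S^1$. The relevant observation, used in the paper, is that $\Ab^1$-invariance gives $\Pb^1 \simeq S^1 \otimes \Gb_m$, so inverting $\Pb^1$ already forces $S^1$ to be invertible, i.e. $\Pc_{\Nis,\abe,\Ab^1}(\Sch^\afp_S)_*[(\Pb^1)^{-1}] \simeq \Pc_{\Nis,\abe,\Ab^1}(\Sch^\afp_S,\Sp)[(\Pb^1)^{-1}]$. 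The rest of your argument (left Kan extension, symmetric monoidality of the comparison, descent through $\otimes$-inversion) is fine and tracks the paper closely.
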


Above, for a qcqs classical scheme $X$, $\Pc_{\cdh, \Ab^1}(\Sch^\fp_{X})_*$ denotes the category of $\Ab^1$-invariant cdh-sheaves of pointed spaces on finitely presented classical $X$-schemes.

\begin{proof}
Let $X \in \Sch^\afp_S$. As the square
\[
\begin{tikzcd}
    \emptyset \arrow[d] \arrow[r] & \emptyset \arrow[d] \\
    X_\cl \arrow[r,hook]& X
\end{tikzcd}
\]
is an abstract blowup square, presheaves on $\Sch^\afp_S$ satisfying abstract blowup excision are \emph{nil-invariant} in the sense that they induce equivalences when pulling back along the inclusion $X_\cl \hook X$ of the underlying classical scheme. Thus, the inclusion $\Sch^\fp_{S_\cl} \hook \Sch^\afp_{S}$, and the truncation $\Sch^\afp_{S} \to \Sch^\fp_{S_\cl}$ functors induce an equivalence of categories
\[
\Pc_{\Zar,\nil}(\Sch^\afp_S)_* \simeq \Pc_{\Zar,\nil}(\Sch^\fp_{S_\cl})_*
\]
of nil-invariant Zariski sheaves.

By a further localization, we obtain a natural equivalence
\[
\Pc_{\Nis,\abe,\Ab^1}(\Sch^\afp_S)_* \simeq \Pc_{\cdh,\Ab^1}(\Sch^\fp_{S_\cl})_*.
\]
Above, we have used the fact that a Nisnevich sheaf on a category of classical schemes, such as $\Sch^\fp_{S_\cl})_*$, is cdh-sheaf if and only if it satisfies excision for abstract blowup squares \cite[Theorem~2.2]{voevodsky:Hcdh}. Furthermore, by $\otimes$-inverting $\Pb^1$, we obtain an equivalence
\[
\Pc_{\Nis,\abe,\Ab^1}(\Sch^\afp_S)_*[(\Pb^1)^{-1}] \simeq \underline{\SH}^\cdh_{S_\cl}.
\]
It remains to be verified that the left hand side is $\SH^\abe_S$. Note that $\Ab^1$-invariance implies that $\Pb^1 \simeq S^1 \otimes \Gb_m$. As
\[
\Pc_{\Nis,\abe,\Ab^1}(\Sch^\afp_S)_*[(S^1)^{-1}] = \Pc_{\Nis,\abe,\Ab^1}(\Sch^\afp_S, \Sp)
\]
and as inverting $\Pb^1$ is the same thing as first inverting $S^1$ and then inverting $\Pb^1$, we have proven the claim.
\end{proof}

To summarize, we have shown the following.

\begin{prop}\label{prop:DiagOfHtyThies}
Let $S$ be a noetherian derived scheme. Then there is a diagram
\begin{equation}
\begin{tikzcd}
    \SH^\abe_S  \arrow[r,hook] & \MS^\abe_S \arrow[r,hook] & \MS^\dbe_S \\
    \SH_S \arrow[rr,hook] \arrow[u,hook] & & \MS_S, \arrow[u]
\end{tikzcd}    
\end{equation}
where the hooked arrows stand for fully faithful functors, where the horizontal arrows are right adjoints of symmetric monoidal localizations, and where the vertical arrows are symmetric monoidal.
\end{prop}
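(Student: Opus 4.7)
The plan is to assemble pieces already established in this section together with one new construction for the left hand vertical arrow. First, the three horizontal arrows: each is the right adjoint of a symmetric monoidal accessible localization of a presentable $\infty$-category, and therefore automatically fully faithful. The two top row localizations $\MS^\dbe_S \to \MS^\abe_S \to \SH^\abe_S$ were established in the previous proposition, and the bottom row localization $\MS_S \to \SH_S$ is the $\Ab^1$-localization recalled in the text just before Definition~\ref{def:MSdbe}.

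For the right hand vertical arrow $\MS_S \to \MS^\dbe_S$ the construction and its symmetric monoidality have already been carried out above. For the left hand vertical arrow $\SH_S \to \SH^\abe_S$ I would copy the same strategy verbatim: left Kan extension $\Pc(\Sm_S, \Sp) \to \Pc(\Sch^\afp_S, \Sp)$ is symmetric monoidal and colimit preserving, and since every smooth Nisnevich cover is a Nisnevich cover in $\Sch^\afp_S$ and every smooth blowup square is an abstract blowup square in $\Sch^\afp_S$, it descends through the relevant localizations to a symmetric monoidal colimit preserving map $\Pc_{\Nis, \Ab^1}(\Sm_S, \Sp) \to \Pc_{\Nis, \abe, \Ab^1}(\Sch^\afp_S, \Sp)$. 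Inverting $\Pb^1$ via the universal property of $\otimes$-inversion then produces the desired symmetric monoidal functor $\SH_S \to \SH^\abe_S$. Commutativity of the square follows from universal properties: both composites $\SH_S \to \MS^\dbe_S$ are colimit preserving symmetric monoidal functors that agree on the generating subcategory $\Sm_S$, and are therefore canonically equivalent.

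The only non-formal input is the fully faithfulness of the vertical arrow $\SH_S \hookrightarrow \SH^\abe_S$ indicated by the hook in the diagram. Via the equivalence of Proposition~\ref{prop:SHcdhNilInv}, this is tantamount to the statement that the comparison from the usual $\SH_{S_\cl}$ (built from smooth classical $S_\cl$-schemes) to the cdh-localized variant $\underline{\SH}^\cdh_{S_\cl}$ (built from all finite type classical $S_\cl$-schemes) is fully faithful over a noetherian base, which is exactly the circle of results studied in \cite{khan:cdh}. This will be the main obstacle in the argument; all other verifications are essentially formal consequences of the universal properties of the localizations and of $\otimes$-inversion.
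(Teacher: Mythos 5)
Your proposal matches the paper's strategy: everything except the fully faithfulness of $\SH_S \hookrightarrow \SH^\abe_S$ is formal, and that fully faithfulness is handled by reducing to the known cdh comparison for classical schemes via Proposition~\ref{prop:SHcdhNilInv}. However, there is one ingredient you have elided. Proposition~\ref{prop:SHcdhNilInv} identifies the \emph{target} $\SH^\abe_S$ with $\underline{\SH}^\cdh_{S_\cl}$, but the \emph{source} of the comparison map is $\SH_S$ over the derived scheme $S$, not $\SH_{S_\cl}$. To say that fully faithfulness of $\SH_S \to \SH^\abe_S$ is ``tantamount'' to the classical statement about $\SH_{S_\cl} \to \underline{\SH}^\cdh_{S_\cl}$, you also need the nil-invariance of $\SH$ over derived schemes, i.e.\ the equivalence $\SH_S \simeq \SH_{S_\cl}$, which is a separate result from Khan's thesis \cite{khan:thesis} that the paper cites explicitly alongside \cite{khan:cdh}. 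Without that, your reduction has a gap between the derived-scheme domain and the classical-scheme statement you are invoking. The rest of your write-up — the formality of the horizontal localizations, the left Kan extension construction of the vertical arrows, and commutativity by universal property — is fine, and much of it the paper does not bother to spell out.
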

\begin{proof}
The only part that requires proof is the fully faithfulness of the left vertical map. The analogous claim is known for the stable motivic homotopy categories of classical schemes \cite{khan:cdh}. Thus the claim follows from Proposition~\ref{prop:SHcdhNilInv}, combined with the nil-invariance result for $\SH_S$ from Adeel Khan's thesis \cite{khan:thesis}.
\end{proof}

\begin{rem}[Cdh-topology vs abstract blowup excision on derived schemes]
In the original version of this paper, we tried to formulate the results of this section using a version of $\MS_S$ (and $\SH_S$) built from ($\Ab^1$-invariant) cdh-sheaves on $\Sch^\afp_S$ instead of Nisnevich sheaves that satisfy abstract blowup excision. However, Marc Hoyois pointed out to us that a cdh-sheaf on derived schemes need not satisfy excision in abstract blowup squares, as abstract blowup squares do not form a regular cd-structure in the sense of \cite[Definition~2.10]{voevodsky:cd} on derived schemes. This is because closed embeddings are not monomorphisms of derived schemes.
\end{rem}

\section{Representability in $\MS_S^\dbe$}\label{sect:Hrep}

Here, we prove that the Elmanto--Morrow motivic cohomology is representable in $\MS_k^\dbe$ using a slight variant of \cite[Construction~6.5]{AHI:atiyah}. Based on the explicit construction of $\Pb^1$-spectra provided in \cite[Section~1]{annala-iwasa:MotSp}, the representability amounts to providing motivic cohomology with the structure of a symmetric spectrum, i.e., 
\begin{enumerate}
    \item constructing \textit{$\Pb^1$-deloopings} $\sigma \colon \Zb(i)^\mot \stackrel{\sim}{\to} \left( \Zb(i+1)^\mot \right)^{\Pb^1}$;
    \item such that the compositions of the adjoints of $\sigma$ induce $\Sigma_i$-equivariant maps 
    \[
        \Pb^1 \otimes \dots \otimes \Pb^1 \otimes \Zb(j)^\mot \to \Zb(i+j)^\mot,
    \]
    where the action of the symmetric group on $\Zb(k)^\mot$ is trivial.
\end{enumerate}
The deloopings are provided by the first Chern class of $\Oc(1)$ on the projective line. The second point is automatically taken care of by the fact that the motivic complexes form an $\Eb_\infty$-algebra of sheaves of graded spectra.

We begin by providing a general description of $\Eb_\infty$-algebra objects in the category $\Sp_c(\Cc)$ of $c$-spectra, where $\Cc$ is a presentably symmetric monoidal $\infty$-category. We will use the notation of \cite[Section~1]{annala-iwasa:MotSp}, except that we denote the category of \emph{symmetric sequences} by $\SSeq(\Cc) := \Fun(\Fin^\simeq, \Cc)$ instead of $\Cc^\Sigma$. Moreover, we denote by $\Gr_\Nb(\Cc) := \Fun(\Nb, \Cc)$ the category of $(\Nb\dash)$graded objects in $\Cc$, where $\Nb$ is the discrete category whose objects are the natural numbers.\footnote{For us, 0 is a natural number.} There is a symmetric monoidal functor $\Fin^\simeq \to \Nb$ where the symmetric monoidal structures are disjoint union on the source and addition on the target. Consequently, precomposition induces a lax symmetric monoidal functor
\begin{equation}\label{eq:FromGrToSSeq}
\Gr_\Nb(\Cc) \to \SSeq(\Cc)   
\end{equation}
where the symmetric monoidal structures on the above categories are those given by the Day convolution. Thus, a graded $\Eb_\infty$-algebra in $\Cc$ can be canonically regarded as an $\Eb_\infty$-algebra in $\SSeq(\Cc)$.

\begin{lem}\label{lem:EooInLaxSp}
An $\Eb_\infty$-algebra in $\Sp_c^\lax(\Cc)$ is equivalent to an $\Eb_\infty$-algebra $E \in \SSeq(\Cc)$ together with a map $\sigma \colon c \to E_1$ in $\Cc$.
\end{lem}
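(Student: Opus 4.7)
The plan is to identify $\Sp_c^\lax(\Cc)$ with modules over a free commutative algebra in symmetric sequences, and then apply standard facts about $\Eb_\infty$-algebras in module categories. First, I would verify (presumably this is essentially how $\Sp_c^\lax(\Cc)$ is set up in \cite{annala-iwasa:MotSp}) that there is a canonical equivalence
\[
\Sp_c^\lax(\Cc) \simeq \Mod_{\Sym(c)}\bigl(\SSeq(\Cc)\bigr),
\]
where $\Sym(c)$ denotes the free $\Eb_\infty$-algebra in $\SSeq(\Cc)$ on the symmetric sequence concentrated in degree $1$ with value $c$. Explicitly, $\Sym(c)_n \simeq c^{\otimes n}$ with the standard $\Sigma_n$-action, and the data of a $\Sym(c)$-module structure on a symmetric sequence $E$ unwinds to a compatible collection of $\Sigma_n \times \Sigma_m$-equivariant bonding maps $c^{\otimes n} \otimes E_m \to E_{n+m}$, which is precisely the structure of a lax $c$-spectrum.

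Next, I would invoke the standard equivalence
\[
\mathrm{CAlg}\bigl(\Mod_A(\Dc)\bigr) \simeq \mathrm{CAlg}(\Dc)_{A/}
\]
valid for any $\Eb_\infty$-algebra $A$ in a presentably symmetric monoidal $\infty$-category $\Dc$ (Lurie, \emph{Higher Algebra}, Corollary~3.4.1.7). Applied with $\Dc = \SSeq(\Cc)$ and $A = \Sym(c)$, together with the previous identification, this yields
\[
\mathrm{CAlg}\bigl(\Sp_c^\lax(\Cc)\bigr) \simeq \mathrm{CAlg}\bigl(\SSeq(\Cc)\bigr)_{\Sym(c)/}.
\]

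Finally, I would use the universal property of $\Sym(c)$ as the free $\Eb_\infty$-algebra on the degree-$1$ symmetric sequence with value $c$: maps $\Sym(c) \to E$ of $\Eb_\infty$-algebras in $\SSeq(\Cc)$ are in canonical bijection with maps of symmetric sequences from that degree-$1$ object to $E$, which in turn are simply maps $\sigma \colon c \to E_1$ in $\Cc$. Composing the three identifications yields the claim. The main obstacle is the first step, confirming the $\Sym(c)$-module description of $\Sp_c^\lax(\Cc)$ from the definition in \cite[Section~1]{annala-iwasa:MotSp}; once that identification is in hand, the remaining steps are formal consequences of the universal properties of $\Mod_A$ and $\Sym$.
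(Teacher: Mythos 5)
Your proposal is correct and follows essentially the same route as the paper: the paper defines $\Sp_c^\lax(\Cc)$ precisely as $\Mod_{S_c}(\SSeq(\Cc))$ with $S_c$ the free $\Eb_\infty$-algebra on the degree-one symmetric sequence with value $c$, then observes that an $\Eb_\infty$-algebra in $S_c$-modules is a map of $\Eb_\infty$-algebras $S_c \to E$, and concludes via the universal property of free $\Eb_\infty$-algebras. Your $\Sym(c)$ is the paper's $S_c$, and your step invoking Lurie's Corollary~3.4.1.7 makes explicit the one sentence in the paper's proof that is left implicit.
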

\begin{proof}
Indeed, the category of lax $c$-spectra in $\Cc$ is by definition the category of $S_c$-modules in $\SSeq(\Cc)$, where $S_c$ is the free $\Eb_\infty$-algebra in $\SSeq(\Cc)$ generated by the symmetric sequence obtained by placing $c$ in degree one, and the initial object everywhere else. As an $\Eb_\infty$-algebra $E$ in $S_c$-modules is the same thing as a map of $\Eb_\infty$-algebras $S_c \to E$ in $\SSeq(\Cc)$, the claim follows from the universal property of free $\Eb_\infty$-algebras. 
\end{proof}

\begin{lem}\label{lem:LaxSpToSp}
Let everything be as in Lemma~\ref{lem:EooInLaxSp}. Then an $\Eb_\infty$-algebra in $\Sp^\lax_c(\Cc)$ lies in the full subcategory $\Sp_c(\Cc)$ of $c$-spectra if and only if the multiplication-by-$\sigma$-maps $\sigma_i \colon c \otimes E_i \to E_{i+1}$ induce adjoint equivalences $E_i \stackrel \sim \to (E_{i+1})^c$.
\end{lem}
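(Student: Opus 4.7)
The plan is to unfold the definitions of $\Sp_c(\Cc)$ inside $\Sp^\lax_c(\Cc)$ and match up the relevant bonding maps. Recall from the construction of $c$-spectra that $\Sp_c(\Cc)$ sits inside $\Sp^\lax_c(\Cc) = \Mod_{S_c}(\SSeq(\Cc))$ as the full subcategory of those $S_c$-modules whose bonding maps—the degreewise action of the generator $c = (S_c)_1$, or equivalently its adjoint—are equivalences in $\Cc$. So the problem reduces to identifying the bonding maps of the $S_c$-module structure on $E$ with the multiplication-by-$\sigma$ maps $\sigma_i$ of the statement.

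First I would invoke Lemma~\ref{lem:EooInLaxSp} to present the $\Eb_\infty$-algebra $E$ in $\Sp^\lax_c(\Cc)$ as an $\Eb_\infty$-algebra $E$ in $\SSeq(\Cc)$ together with a map $\sigma \colon c \to E_1$. By the proof of that lemma, the $S_c$-module structure on $E$ is obtained by restriction along the map of $\Eb_\infty$-algebras $f \colon S_c \to E$ in $\SSeq(\Cc)$ classifying $\sigma$; in particular the action map factors as the composite $\mu_E \circ (f \otimes \mathrm{id}) \colon S_c \otimes E \to E \otimes E \to E$, where $\mu_E$ is the multiplication on $E$.

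Next I would compute this action in symmetric-sequence degree $i{+}1$, restricted along the inclusion of the degree-$1$ piece $c \hookrightarrow S_c$. By the universal property of $S_c$, the map $f$ sends the generator $c$ to $\sigma \in E_1$, so via the Day convolution description of $\otimes$ on $\SSeq(\Cc)$ the bonding map in question is the composite
\[
c \otimes E_i \xrightarrow{\sigma \otimes \mathrm{id}} E_1 \otimes E_i \xrightarrow{\mu_{1,i}} E_{i+1},
\]
which is by definition the multiplication-by-$\sigma$ map $\sigma_i$. Passing to adjoints identifies this with $E_i \to (E_{i+1})^c$, and the lemma follows at once.

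The only delicate point—and the expected main obstacle—is tracing through the Day convolution on $\SSeq(\Cc)$ and the universal property of the free $\Eb_\infty$-algebra $S_c$ carefully enough to identify these two maps up to canonical equivalence. Once this identification is in hand, the equivalence between lying in $\Sp_c(\Cc)$ and the $\sigma_i$ being equivalences is a direct consequence of the definition of $\Sp_c(\Cc)$ as a full subcategory of $\Sp^\lax_c(\Cc)$.
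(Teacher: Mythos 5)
Your proposal is correct and follows essentially the same route as the paper: both characterize membership in $\Sp_c(\Cc)$ via the adjoint bonding maps being equivalences, reduce to checking degreewise on underlying symmetric sequences, and identify those bonding maps with the multiplication-by-$\sigma$ maps $\sigma_i$. The paper's proof is terser (it packages the bonding maps as a single map $\sigma \colon s_+(c \otimes E) \to E$ of lax $c$-spectra, cites the definition from Annala--Iwasa, and invokes conservativity of the forgetful functor to $\SSeq(\Cc)$ before saying ``by unwinding the definitions''), whereas you make the ``unwinding'' explicit by tracing the $S_c$-action through the universal property of the free $\Eb_\infty$-algebra and the Day convolution; the content is the same.
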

\begin{proof}
Indeed, the multiplication by $\sigma$ induces a map of lax $c$-spectra
\[
\sigma \colon s_+ (c \otimes E) \to E
\]
and the condition of being a $c$-spectrum is exactly that the induced adjoint map
\[
\sigma^\sharp \colon E \to s_-E^c
\]
is an equivalence \cite[Definition~1.3.8]{annala-iwasa:MotSp}. Since the forgetful functor $\Sp^\lax_c(\Cc) \to \SSeq(\Cc)$ is conservative, it suffices to check this on the underlying symmetric sequences. By unwinding the definitions, we see that this is exactly the condition in the statement.
\end{proof}

The above general discussion allows us to provide convenient criterion for representability of cohomology theories in motivic spectra. For this purpose, \rev{let $S$ be a derived scheme and} let $\Hc$ be a symmetric monoidal accessible localization of $\Pc(\Sch^\afp_S; \Sp)$. 

\begin{defn}
\begin{enumerate}
    \item A \emph{pre-orientation} on an $\Eb_\infty$-algebra object $E \in \Gr_\Nb(\Hc)$ is a point
    \[
    c_1 \in E_1(\Ps ic_S, S),\footnote{\rev{The \emph{relative cohomology} $E(Y,X)$ associated to a morphism $f \colon X \to Y$ is the fibre of $f^* \colon E(Y) \to E(X)$}.}
    \]
    where the Picard stack over $S$, $\Ps ic_S$, is regarded as a pointed object with respect to the map $S \to \Ps ic_S$ classifying the structure sheaf of $S$.
    
    \item If $X \in \Sch^\afp_S$ and $\Ls$ is a line bundle on $X$ that is classified by the map $f_\Ls \colon X \to \Ps ic_S$, then the \emph{(first) Chern class} of $\Ls$ is 
    \[
    c_1(\Ls) := f_\Ls^*(c_1) \in E_1(X).
    \]
    Notice that, by definition, a trivialization of a line bundle provides a trivialization of its first Chern class.
    
    \item A pre-orientation $c_1$ on $E$ is an \emph{orientation} if $E$ satisfies the projective bundle formula with respect to $c_1$, i.e., if the map given by
    \[
    \bigoplus_{i=0}^n c_1(\Oc(1))^i \rev{\pi^*} \colon \bigoplus_{i=0}^n E_{m-i}(X) \stackrel{\sim}{\to} E_{m}(\Pb^n_X),
    \]
    is an equivalence for all $X$ and $n,m \geq 0$\rev{, where $\pi \colon \Pb^n_X \to X$ is the natural structure map,} and where by convention $E_i = 0$ for $i < 0$.
\end{enumerate}
\end{defn}

\rev{Suppose $E$ is an $\Eb_\infty$-algebra object in $\Gr_\Nb(\Hc)$, and $c_1 \in E(\Ps ic_S, S)$ is a pre-orientation on $E$.} As the tautological bundle $\Oc(1)$ on $\Pb^1_S$ restricts to a trivial line bundle along $\infty \colon S \to \Pb^1_S$, we obtain a lift
\begin{equation}\label{eq:ChLift}
\sigma \in E_1(\Pb^1_S, S)    
\end{equation}
of the first Chern class to the relative cohomology.\footnote{One might worry about the dependence of the lift on the choice of the trivialization $\gamma \colon \Oc(1) \vert_S \simeq \Oc_S$. Fortunately, the lift turns out to be independent of this choice. Indeed, if $\gamma'$ is another trivialization, then $\gamma$ and $\gamma'$ differ by an automorphism $\alpha$ of $\Oc_S$, i.e., an invertible function on $S$. As $\alpha$ lifts to a (unique) automorphism $\tilde \alpha$ of $\Oc(1)$, we obtain a path in $E(\Pb^1,S)$ connecting the two lifts of $c_1(\Oc(1))$.}

\begin{thm}\label{thm:MotSpRep}
If $E \in \Gr_\Nb(\Hc)$ is an oriented $\Eb_\infty$-algebra, then the class $\sigma$ defined in Eq.~\eqref{eq:ChLift} endows the symmetric sequence associated to $E$ (via Eq.~\eqref{eq:FromGrToSSeq}) with the structure of an oriented $\Eb_\infty$-algebra in $\Sp_{\Pb^1_S}(\Hc)$.
\end{thm}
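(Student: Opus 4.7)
The plan is to apply Lemmas~\ref{lem:EooInLaxSp} and~\ref{lem:LaxSpToSp} in sequence to the symmetric sequence associated to $E$ via Eq.~\eqref{eq:FromGrToSSeq}, equipped with the structure map $\sigma \in E_1(\Pb^1, S)$. By Lemma~\ref{lem:EooInLaxSp}, this data immediately produces an $\Eb_\infty$-algebra in the category $\Sp^\lax_{\Pb^1}(\Hc)$ of lax $\Pb^1$-spectra. The work will lie entirely in showing that this lax spectrum is an honest $\Pb^1$-spectrum: by Lemma~\ref{lem:LaxSpToSp}, I must verify that for every $i \geq 0$, the adjoint $E_i \to (E_{i+1})^{\Pb^1}$ of the bonding map $\sigma_i \colon \Pb^1 \otimes E_i \to E_{i+1}$ is an equivalence in $\Hc$.

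To check this, I will evaluate at an arbitrary $X \in \Sch^\afp_S$ and reduce to showing that multiplication by $\sigma$ induces an equivalence
\[
E_i(X) \xto{\sim} E_{i+1}(\Pb^1_X, X),
\]
where the target is the relative cohomology with respect to the section $\infty \colon X \hook \Pb^1_X$. The projective bundle formula supplied by the orientation, in the case $n = 1$, provides a splitting
\[
E_{i+1}(\Pb^1_X) \simeq E_{i+1}(X) \oplus c_1(\Oc(1)) \cdot E_i(X),
\]
in which the first summand is the image of $p_X^*$ and the second summand is identified with $E_i(X)$ via multiplication by $c_1(\Oc(1))$. Because $\Oc(1)$ is canonically trivialized over the section $\infty$, restriction along $\infty$ kills the second summand, so it coincides with $E_{i+1}(\Pb^1_X, X)$. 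This delivers the desired equivalence.

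The main subtlety, and the step I expect will require the most care, is matching the map induced by the bonding map $\sigma_i$ on relative cohomology with multiplication by $c_1(\Oc(1))$. Unwinding definitions, $\sigma_i$ factors as $\Pb^1 \otimes E_i \xto{\sigma \otimes \Id} E_1 \otimes E_i \xto{m} E_{i+1}$, so the image of $\alpha \in E_i(X)$ in $E_{i+1}(\Pb^1_X)$ is $p_1^*(\sigma) \cdot p_2^*(\alpha) = c_1(\Oc(1)) \cdot p_X^*(\alpha)$. This is exactly the summand $c_1(\Oc(1)) \cdot E_i(X)$ appearing in the projective bundle formula, which is what identifies the two maps.

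Finally, the resulting $\Pb^1$-spectrum is oriented essentially by construction: the pre-orientation $c_1 \in E_1(\Ps ic, S)$ lives in level one, which is unchanged under the passage from the graded $\Eb_\infty$-algebra to the symmetric $\Pb^1$-spectrum, and the projective bundle formula over arbitrary $\Pb^n_X$ transfers verbatim from the orientation hypothesis on $E$.
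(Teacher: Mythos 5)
Your proof is correct and takes essentially the same route as the paper's: invoke the two lemmas to reduce the statement to the assertion that multiplication by $\sigma$ induces equivalences $E_i \xto{\sim} (E_{i+1})^{\Pb^1}$, and establish that by the $n=1$ case of the projective bundle formula. You spell out two points the paper leaves implicit (the identification of the bonding map with multiplication by $c_1(\Oc(1))\cdot p_X^*(-)$, and the persistence of the orientation after passing to $\Pb^1$-spectra), which is fine but does not change the argument.
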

\begin{proof}
Indeed, multiplication by $\sigma$ induces maps
\[
\sigma \colon E_i(X) \stackrel{\sim}{\to} E_{i+1}(\Pb^1_X,X),
\]
which are equivalences by the $n=1$ case of the projective bundle formula. In other words, $\sigma$ induces equivalences $E_i \stackrel \sim \to (E_{i+1})^{\Pb^1_S}$, where in the last formula $\Pb^1_S$ is regarded as a pointed object with at basepoint at infinity. Thus the claim follows from Lemma~\ref{lem:LaxSpToSp}.
\end{proof}

\rev{
Let $k$ be a field. As the motivic sheaves on almost finite type $k$-schemes satisfy Nisnevich descent and derived blowup excision, they form objects $\Zb^\mot(j) \in \Pc_{\Nis,\dbe}(\Sch_k^\afp; \Sp)$. The $\Nb$-graded multiplicative structure of motivic cohomology assembles these sheaves into an $\Eb_\infty$-algebra object 
\[
\Zb(*)[2*] \in \Gr_\Nb(\Pc_{\Nis,\dbe}(\Sch_k^\afp; \Sp)).\footnote{\rev{Note that we have shifted the $j$th motivic complex by $2j$ topological degrees, as it is these shifted motivic complexes that occur as the graded pieces on the motivic filtration on algebraic $K$-theory \cite[Theorem~1.1]{elmanto-morrow}. The $\Nb$-graded $\Eb_\infty$-algebra structure on motivic cohomology is then obtained by passing to graded pieces, which is symmetric monoidal \cite[\S 2.2]{elmanto-morrow}.}}
\]
Finally, as the map of \cite[Lemma~5.1]{elmanto-morrow} provides a pre-orientation for motivic cohomology, with respect to which it satisfies the projective bundle formula \cite[Theorem~5.3]{elmanto-morrow}, we may apply Theorem~\ref{thm:MotSpRep} to deduce the following representability result for motivic cohomology.
}


\begin{cor}
Let $k$ be a field. Then the symmetric sequence associated to the $\Nb$-graded sheaf of spectra whose graded pieces are $\Zb(j)^\mot[2j] \colon \Sch^{\afp,\op}_k \to \Sp$ lifts to an object in $\MS^\dbe_k$.
\end{cor}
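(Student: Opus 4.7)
The plan is to verify the hypotheses of the preceding theorem for the graded object $E \in \Gr_\Nb(\Hc)$ defined by $E_j := \Zb(j)^\mot[2j]$, where $\Hc := \Pc_{\Nis, \dbe}(\Sch^\afp_k, \Sp)$; the preceding theorem then immediately supplies the required lift to $\MS^\dbe_k$. Thus the work reduces to three verifications, each of which is essentially a citation of structural properties of Elmanto--Morrow motivic cohomology recalled in Section~\ref{subsect:EMCoh}.

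First, I would check that each $E_j$ genuinely lies in $\Hc$, i.e., that $\Zb(j)^\mot[2j]$ satisfies Nisnevich descent and derived blowup excision on almost-finite-presentation derived $k$-schemes. Nisnevich descent is built into the Elmanto--Morrow construction, since the motivic complexes arise as graded pieces of a motivic filtration on algebraic $K$-theory, which is Nisnevich local. Derived blowup excision is recorded explicitly in the third bullet of the theorem in Section~\ref{subsect:EMCoh}. Suspensions preserve both properties, so $E_j \in \Hc$.

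Second, I would endow $E$ with the structure of an $\Eb_\infty$-algebra in $\Gr_\Nb(\Hc)$. This is provided by the multiplicativity of the motivic filtration on $K$-theory, which forces its associated graded to be an $\Eb_\infty$-algebra in graded spectra. The shifts are compatible with the Day convolution since
\[
\Zb(i)^\mot[2i] \otimes \Zb(j)^\mot[2j] \simeq \bigl(\Zb(i)^\mot \otimes \Zb(j)^\mot\bigr)[2(i+j)],
\]
so the $\Eb_\infty$-structure on the unshifted tower transports uniquely to one on $\bigl(\Zb(j)^\mot[2j]\bigr)_{j \in \Nb}$. Since the symmetric monoidal localization $\Pc(\Sch^\afp_k,\Sp) \to \Hc$ is symmetric monoidal, this structure descends to $\Hc$.

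Third, I would supply an orientation. The natural identification $\mathrm{Pic}(X) \cong H^{2,1}_\mot(X;\Zb) = E_1(X)$ from Section~\ref{subsect:EMCoh} produces a functorial first Chern class of line bundles, which assembles into a pre-orientation $c_1 \in E_1(\mathscr{P}ic, S)$. The projective bundle formula of Section~\ref{subsect:EMCoh}, extended to derived schemes in Remark~\ref{rem:DerivedPBF}, then upgrades $c_1$ to an honest orientation, after which the preceding theorem yields the lift to $\MS^\dbe_k$. The main obstacle is the $\Eb_\infty$-structure: producing a sufficiently highly coherent multiplicative refinement of the motivic filtration on $K$-theory is not formal, but it is precisely the content of the Elmanto--Morrow (and Kelly--Saito) construction, so for our purposes it may be invoked as a black box.
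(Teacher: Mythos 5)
Your proof is correct and is exactly the verification the paper intends: the corollary carries no explicit proof, and the preceding text makes clear that one is meant to invoke the preceding theorem after checking that $\Zb(j)^\mot[2j]$ lives in $\Pc_{\Nis,\dbe}(\Sch^\afp_k,\Sp)$, that the multiplicative motivic filtration supplies the graded $\Eb_\infty$-structure, and that the $\mathrm{Pic}$-identification together with the derived projective bundle formula (Remark~\ref{rem:DerivedPBF}) gives the orientation. Nothing is missing.
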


\rev{
\begin{rem}
Similarly, as mixed characteristic motivic cohomology satisfies Nisnevich descent, smooth blowup excision, and projective bundle formula \cite{bouis1,bouis2}, we deduce that for every qsqs scheme $S$, motivic cohomology is representable in $\MS_S$. If we knew that mixed characteristic motivic cohomology satisfied derived blowup excision, then we could upgrade this representability result into representability in $\MS^\dbe_S$.
\end{rem}
}
\bibliographystyle{alphamod}
\bibliography{references}

\end{document}